\newtheorem{thm}{Theorem}
\numberwithin{thm}{section}
\newtheorem*{thm*}{Theorem}
\newtheorem{prop}[thm]{Proposition}
\newtheorem*{conjecture*}{Conjecture}
\theoremstyle{definition}
\newtheorem{defn}[thm]{Definition}
\newtheorem{remark}[thm]{Remark}
\newtheorem*{remark*}{Remark}
\newcommand\abs[1]{\left\vert#1\right\vert}
\newcommand\ang[1]{\left\langle#1\right\rangle}
\newcommand\floor[1]{{\left\lfloor#1\right\rfloor}}
\let\tilde\widetilde
\renewcommand\bar[1]{\overline{#1}}
\renewcommand\dots{...} 
\renewcommand\Im{\operatorname{Im}}
\providecommand\id{\mathrm{id}}
\DeclareMathOperator\Cl{Cl}
\DeclareMathOperator\Int{Int}
\newcommand\D{{\mathbb D}}
\newcommand\R{{\mathbb R}}
\renewcommand\S{{\mathbb S}}
\newcommand\Z{{\mathbb Z}}
\newcommand\G{{\Gamma}}
\newcommand\g{{\gamma}}
\newcommand\Fc{{\mathcal F}}
\newcommand{\psu}{\mathrm{PSU}(1,1)}
\newcommand\Hc{{\mathcal H}}
\providecommand\Matrix[1]{ \begin{pmatrix} #1 \end{pmatrix} }
\setlist[itemize]{label=\textbullet}
\newcommand\STRING{s_1,s_2, \dots, s_\ell}
\newcommand\Tdef{Teich\-m\"uller defor\-ma\-tion}
\newcommand\A{\mathcal{A}}
\newcounter{commentcounter}\newcommand\COMMENT[2][red]{\stepcounter{commentcounter}\rlap{\smash{$^{\fcolorbox{#1}{#1!15}{\scriptsize\ifthenelse{\equal{#1}{green}}{\color{green!75!black}}{\color{#1}}\!\!{\bf\thecommentcounter}\!\!}}$}}\marginpar{\!\!\parbox{2.8cm}{\raggedright\small \ifthenelse{\equal{#1}{green}}{\color{green!67!black}}{\color{#1}} \textbf{\thecommentcounter.}\,#2}}} \usepackage{silence} \WarningFilter{latex}{Marginpar on page}
\title{Reduction theory for {F}uchsian groups~with cusps}
\author{Adam Abrams}
\address{Faculty of Pure and Applied Mathematics, Wroc\l{}aw University of Science and Technology, Wroc\l{}aw, 50370, Poland}
\email{the.adam.abrams@gmail.com}
\author{Svetlana Katok}
\address{Department of Mathematics, The Pennsylvania State University, University Park, PA 16802}
\email{sxk37@psu.edu}
\author{Ilie Ugarcovici}
\address{Department of Mathematical Sciences, DePaul University, Chicago, IL 60614}
\email{iugarcov@depaul.edu}
\keywords{Fuchsian groups, boundary maps, global attractor, free product structure}
\subjclass{37D40, 37E10}
\date{August 15, 2025}
\begin{document}
\begin{abstract}
We study a family of Bowen--Series-like maps associated to any finitely generated Fuchsian group of the first kind with at least one cusp. These maps act on the boundary of the hyperbolic plane in a piecewise manner by generators of the group. We show that the two-dimensional natural extension (reduction map) of the boundary map has a domain of bijectivity and global attractor with a finite rectangular structure, confirming a conjecture of Don Zagier. Our work is based on the construction of a special fundamental polygon, related to the free product structure of the group, whose marking is preserved by ``Teichm\"uller deformation.''
\end{abstract}
\maketitle

\section{Introduction}\label{sec:intro}
Let~$\G$ be a finitely generated Fuchsian group of the first kind (finite covolume) acting on the hyperbolic plane properly discontinuously by orientation-preserving isometries. In the upper half-plane model $\Hc$, $\G<\mathrm{PSL}(2,\R)$, and in the unit disk model~$\D$, $\G<\mathrm{PSU}(1,1)$. In this paper, we mostly use the disk model~$\D$, leaving the upper half-plane $\Hc$ only for some classical examples.

We denote the Euclidean boundary for either model by~$\S$: for the upper half-plane $\S = \partial\Hc = \R P^1$, and for the unit disk $\S = \partial\D = S^1$ is the circle at infinity. Recall that $\G$ is of the first kind if its limit set is all of $\S$. If $\G$ is a finitely generated Fuchsian group of the first kind, it has finite covolume \cite[Theorems 4.6.1 and 4.5.1]{Katok-book}, $r$ conjugacy classes of maximal elliptic cyclic subgroups of orders $m_1, m_2,\dots, m_r$ (all $m_i\ge 2$, listed in non-decreasing order) and $t$ conjugacy classes of maximal parabolic subgroups. The quotient space $\G\backslash\D$ is a Riemann surface of genus $g\ge 0$ with $r$ ramification points (with ramification numbers $m_1, m_2,\dots, m_r$) and $t$ cusps (to which it is convenient to assign ramification numbers~$\infty$). Such $\G\backslash\D$ are called \emph{Riemann surfaces of finite type} or \emph{Riemann orbifolds}.
The list
\begin{equation}\label{signature}
(g; m_1,m_2,\dots, m_r, \underbrace{\infty,\dots,\infty}_{\text{$t$}}) = (g; m_1,m_2,\dots, m_r;t)
\end{equation}
is called the \emph{signature} of~$\G$. The only restriction on the signature is that
\begin{equation}\label{eq:area}
2g-2+\sum_{i=1}^{r}\left(1-\frac1{m_i}\right)+t>0,
\end{equation}
corresponding to positivity of the hyperbolic area of $\G\backslash\D$~\cite[Theorem 4.3.1]{Katok-book}, which we will always assume. On the other hand, by Poincar\'e's Polygon Theorem, for any signature~\eqref{signature} there exists a finitely generated Fuchsian group of the first kind with this signature.

Throughout the paper, when we refer to a Fuchsian group with a given signature, we always mean a finitely generated Fuchsian group of the first kind.

For such Fuchsian group~$\Gamma$, Bowen and Series~\cite{BowenSeries79} constructed a map---which we refer to as a \emph{boundary map}---acting on~$\S$ in a piecewise manner by generators of~$\G$ which is orbit-equivalent to~$\G$ and Markov with respect to a finite (if~$\G$ has no cusps) or countable (if~$\G$ has cusps) partition of~$\S$. They established some important ergodic properties, which were later used by C.~Series~\cite{Series81} to construct a symbolic representation of the geodesic flow on the Riemann surface~$\Gamma\backslash\mathbb D$. 

Adler and Flatto investigated thoroughly boundary maps in the case of the modular group~\cite{AF84} and surface groups~\cite{AF91}, i.e., co-compact groups with no elliptic elements, using an $(8g-4)$-sided fundamental polygon instead of the $4g$-gon from~\cite{BowenSeries79}. In 2014, Ahmadi and Sanaz~\cite{Ahmadi-Sanaz2014} generalized Adler and Flatto's work on surface groups to co-compact Fuchsian groups with elliptic elements.

In 2005, Don Zagier experimentally observed that for the modular group~$\mathrm{PSL}(2,\Z)$ the two-dimensional map on geodesics (which is the natural extension of the boundary map) has a global attractor with finite rectangular structure. The finite rectangular structure property along with other properties (conjecturally equivalent to it) form, in Zagier's terminology, a ``reduction theory'' for the group. The original motivation of Don Zagier was the inversion problem for modular forms on finitely generated Fuchsian groups with at least one cusp (reconstruction of modular forms from its period cocycles) \cite{Zagier-2023}, where the finite rectangular structure of the attractor is essential. He also conjectured that the rectangular structure persists when the partition points used in defining the boundary map are perturbed in a continuous manner.

In this paper, we prove Zagier's conjecture for all finitely generated Fuchsian groups of the first kind with at least one cusp.\footnote{This class, in particular, contains all congruence subgroups.} 

The main result of this paper is the \hypertarget{main}{following theorem}.

\medskip\noindent\textbf{Main Theorem.} {\it
Given a Fuchsian group with signature $(g; m_1,m_2,\dots, m_r;\mbox{\ensuremath{t\ge 1)}}$, there exists a marked fundamental polygon, a finite partition of \mbox{$\S = \partial\D$}, and a boundary map acting piecewisely by generators of the group with the following properties:
\begin{enumerate}[label=(\roman*)]
    \item \label{main-i} the boundary map has a finite Markov partition,
    \item \label{main-ii} its natural extension has a global attractor with finite rectangular structure,
    \item \label{main-iii} almost every point enters the attractor in finite time.
\end{enumerate}
If, in addition, the group has at least one elliptic element of order greater than~$2$, there exists a continuous family of partitions and associated boundary maps with properties~(ii) and~(iii).
}\smallskip

\Cref{fig:polygon and attractor example} shows an example of a fundamental polygon, partition of the boundary, and the attractor of the natural extension of the associated boundary map.

\begin{figure}[hbt]
    \includegraphics[width=0.95\textwidth]{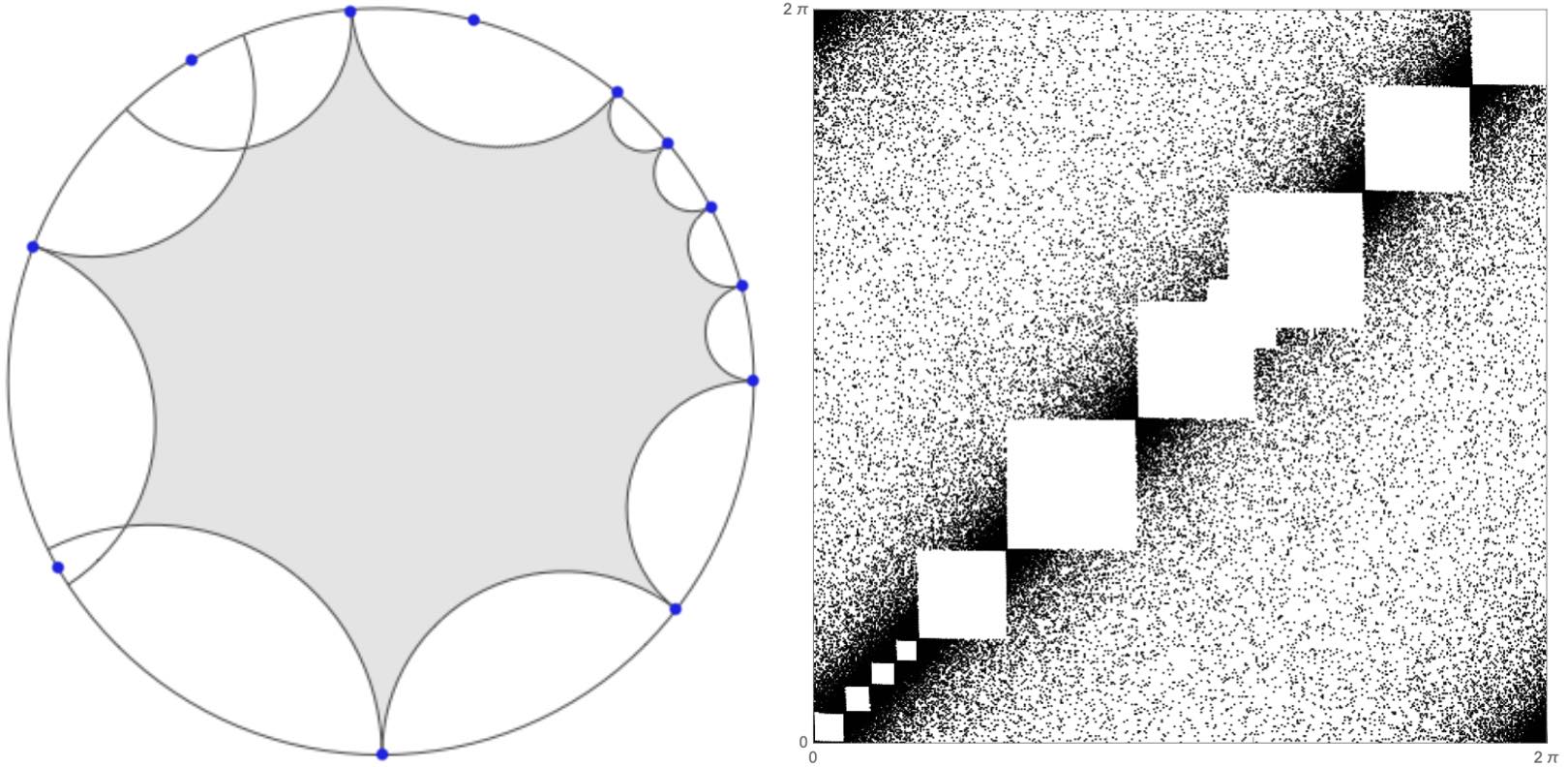}
    \caption{Example of polygon, partition, and attractor (genus~1 with~3 elliptic points and~2 cusps)}
    \label{fig:polygon and attractor example}
\end{figure}

Fuchsian groups with at least one cusp, i.e., with signature $(g,m_1,\dots,m_r;t\ge 1)$, are free products of cyclic groups~\cite{Lehner-1960}, and the generators of these cyclic groups are called \emph{independent generators}.
We consider fundamental polygons related to the free product structure of the group whose side-pairing transformations are independent generators of the group. Such fundamental polygons have many vertices that are ideal (i.e., on the boundary~$\S$), and because of that we call these polygons ``quasi-ideal'' (\Cref{def:qi}).

We prove the main theorem in two steps:
\begin{enumerate}
    \item For a given signature, we construct a ``canonical'' polygon (\Cref{def:canonical}), a canonical Fuchsian group with this signature, and associated boundary maps and their natural extensions. We prove our results for this special situation using its additional features (see Theorems~\ref{thm:bijectivity} and~\ref{thm:attractor}).
    \item The result for a given Fuchsian group is obtained using  the Fenchel--Nielsen map, which conjugates the data for the canonical Fuchsian group (of the same signature) with the data for the given group. See \Cref{sec:Teich} for precise definitions.
\end{enumerate}
Any Fuchsian group with at least one cusp has a quasi-ideal fundamental polygon obtained by ``\Tdef{}'' of the canonical polygon for the same signature (\Cref{thm:FG-qip}) which preserves the combinatorial structure and the marking of the polygon. This allows us to go from step~(1) to step~(2) and also gives an alternative definition of Techm\"uller space as the space of quasi-ideal polygons, up to orientation-preserving isometries of~$\D$ (\Cref{thm:teich-homeo}). The only other case of ``Teichm\"uller friendly'' polygon construction we are aware of is the $(8g-4)$-sided fundamental polygon for surface groups of genus~$g$ in~\cite{AF91, KU-structure}.\footnote{For other constructions of Teichm\"uller space for surface groups via fundamental polygons, see~\cite{ZVC1980, SchmutzSchaller-1999}.} Our construction is notably distinct in that it uses the ideal vertices of fundamental polygons.

\medskip
The paper is organized as follows. In \Cref{sec:groups} we review classical results about fundamental domains and free products, introduce a notion of a canonical polygon, and prove (\Cref{thm:cqi}) that for any signature and any marked canonical polygon, the group generated by its side-pairing transformation is a Fuchsian group with this signature.
In \Cref{sec:Teich} we recall the definition of Teichm\"uller space, describe the \Tdef{} of the canonical polygon, and prove (\Cref{thm:FG-qip}) that any finitely generated Fuchsian group of the first kind has a fundamental polygon obtained by this method. We conclude the section by describing the Teichm\"uller space of a Riemann orbifold as the space of quasi-ideal polygons (\Cref{thm:teich-homeo}).
In \Cref{sec:boundary map} we define boundary maps  and their natural extensions related to quasi-ideal fundamental polygons and prove the finite orbit/cycle property for the boundary maps.
In \Cref{sec:canonical proof} we prove that in the canonical case the two-dimensional natural extension of the boundary map has a domain of bijectivity and a global attractor with finite rectangular structure 
(\Cref{thm:bijectivity} and \Cref{thm:attractor}).
In \Cref{sec:main proof} we complete the proof of the main theorem for any Fuchsian group with at least one cusp, using the \Tdef{}.
The explicit construction of the canonical polygon is contained in Appendix~\ref{sec:explicit}.

\subsection*{Acknowledgments} We would like to thank Ser Peow Tan for insightful remarks about quasi-ideal polygons. We also thank Federico Rodriguez-Hertz and Sergei Tabachnikov for helpful discussions and the anonymous referee for a thorough and careful reading of the paper and useful remarks.

\section{Fuchsian groups and fundamental domains}\label{sec:groups}

\subsection{Classical fundamental domains}
The most common classical fundamental domains for Fuchsian groups are Dirichlet polygons (centered at a point not fixed by any element of the group), which go back to Fricke--Klein, and Ford fundamental polygons~\cite{Ford-1925} obtained using isometric circles of elements in the group. The group with a given signature in Poincar\'e's Polygon Theorem is generated by M\"obius transformations pairing the sides of a fundamental polygon (see~\cite{Jones-Singerman-1987} or~\cite[Theorem 4.3.2]{Katok-book} for the construction and~\cite{Maskit1971} for the proof), which has the following features:
\begin{itemize} \raggedright
    \item the number of sides for a group with signature $(g; m_1,m_2,\dots, m_r;t)$ is $4g+2r+2t$,
    \item each elliptic and parabolic cycle consists of a single vertex,
    \item there is one accidental cycle which represents an ordinary point.
\end{itemize}
Such a fundamental polygon is called a \emph{canonical Poincar\'e polygon} or a \emph{Fricke polygon}, and
the obtained Fuchsian group has the canonical presentation

\allowdisplaybreaks
\begin{equation}
\begin{aligned}\label{presentation}
\G=\bigg\langle &a_1,b_1,\dots\,,a_g,b_g, x_1,\dots,x_r,p_1,\dots \,, p_t:\\&\qquad x_1^{m_1}=\cdots =x_r^{m_r}
=1, \;\prod_{j=1}^t p_j\prod_{i=1}^r x_i\prod_{k=1}^g b_k^{-1}a_k^{-1}b_ka_k = 1\bigg\rangle,
\end{aligned}
\end{equation}
where $a_k$ and $b_k$ are hyperbolic elements (note $b_k^{-1}a_k^{-1}b_ka_k = [a_k,b_k]$ is the commutator right-to-left, consistent with composing transformations), $x_k$ are elliptic, and $p_k$ are parabolic. Conversely, any finitely generated Fuchsian group of the first kind possesses a Fricke polygon (obtained by allowable modifications of a Dirichlet polygon). This result goes back to Klein~\cite{Klein-1923}, see also Lehner~\cite{Lehner-book} and Keen~\cite{Keen-1965}.

\subsection{Free product of cyclic groups}\label{sec:free product} 
If the group has at least one parabolic element, say,~$p_1$, then it can be eliminated from the presentation~\eqref{presentation} and we obtain a group isomorphic to~$\G$ whose generators are those of~$\G$ with~$p_1$ deleted, and whose  only relations are elliptic. In other words, such a group is a free product of~$r$ cyclic subgroups of finite order
and $2g+t-1$ cyclic subgroups of infinite order, of which~$2g$ generators are hyperbolic and are related to the genus, and $t-1$ generators are parabolic fixing non-congruent cusps. The set of generators of these cyclic groups is called an \emph{independent set of generators} for~$\G$. More precisely, the following theorem holds:

\begin{thm*}[Lehner~\cite{Lehner-1960}] Let~$\G$ be a Fuchsian group with signature
$(g;m_1, \dots, m_r;t)$ with $t \ge 1$. Then 
\begin{equation}\label{freeproduct}
\Gamma\cong 
\Z_{m_1} * \Z_{m_2} * \cdots * \Z_{m_r} * \underbrace{ \Z * \cdots * \Z }_{2g+t-1}.
\end{equation}
\end{thm*}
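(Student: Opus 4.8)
The plan is to obtain the isomorphism~\eqref{freeproduct} by a purely algebraic manipulation of the canonical presentation~\eqref{presentation}. By the classical results of Klein, Lehner and Keen recalled above, every finitely generated Fuchsian group of the first kind admits a Fricke polygon, and by Poincar\'e's Polygon Theorem the cycle relations of that polygon constitute a complete set of relators; hence I may assume that $\G$ is presented as in~\eqref{presentation}.

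Since $t\ge 1$, the parabolic generator~$p_1$ is present, and the single long relation of~\eqref{presentation},
\begin{equation*}
p_1p_2\cdots p_t\,x_1\cdots x_r\,[a_1,b_1]\cdots[a_g,b_g]=1,
\end{equation*}
can be solved for~$p_1$:
\begin{equation*}
p_1=\bigl(p_2\cdots p_t\,x_1\cdots x_r\,[a_1,b_1]\cdots[a_g,b_g]\bigr)^{-1}=:u,
\end{equation*}
a word in the remaining generators. The generator $p_1$ does not occur in any of the other relators $x_1^{m_1},\dots,x_r^{m_r}$, so the Tietze transformation that removes $p_1$ from the generating set, deletes the long relator, and substitutes $u$ for every remaining occurrence of $p_1$ (there are none) yields a presentation of the \emph{same} group $\G$, namely
\begin{equation*}
\G\cong\langle\, a_1,b_1,\dots,a_g,b_g,x_1,\dots,x_r,p_2,\dots,p_t \;:\; x_1^{m_1}=\cdots=x_r^{m_r}=1 \,\rangle.
\end{equation*}

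To finish, I would simply recognize the right-hand side as a standard presentation of the free product $\Z_{m_1}*\cdots*\Z_{m_r}*\Z*\cdots*\Z$ with $2g+t-1$ infinite cyclic factors: the generators $x_i$, with their relations $x_i^{m_i}=1$, give the torsion factors $\Z_{m_i}$, and the $2g$ hyperbolic generators $a_k,b_k$ together with the $t-1$ parabolic generators $p_2,\dots,p_t$ --- each of infinite order in~$\G$ and subject to no relation --- give the $2g+t-1$ copies of~$\Z$. This is~\eqref{freeproduct}. The degenerate instances need no separate treatment: when $t=1$ the eliminated relator is $p_1x_1\cdots x_r[a_1,b_1]\cdots[a_g,b_g]$ and the only free factors are the $2g$ hyperbolic generators, while if $g=0$ or $r=0$ the relevant block of generators is simply empty.

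I do not expect any serious obstacle: the entire content of the argument is concentrated in the input that~\eqref{presentation} really is a presentation of $\G$ (Poincar\'e's Polygon Theorem plus the existence of a Fricke polygon for every such group), after which the proof is a one-line Tietze move. A more self-contained route, closer to Lehner's original reasoning, would be to construct directly a fundamental domain adapted to the free-product decomposition and read off its side-pairing transformations as the independent generators; this takes more work and is subsumed by the approach above, but it is what produces the geometric ``independent set of generators'' referred to in the text.
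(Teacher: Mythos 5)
Your proposal is correct and takes essentially the same route as the paper: the discussion preceding the theorem statement (Section~\ref{sec:free product}) already sketches exactly this argument, observing that $p_1$ can be eliminated from the canonical presentation~\eqref{presentation} using the long relation, leaving only the elliptic relators and hence a free product of cyclic groups. Your Tietze-transformation write-up simply makes that sketch formal.
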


The idea of using the free product of groups in order to provide examples for a great variety of Fuchsian groups by the ``method of free combination'' goes back to Klein~\cite{Fricke-Klein-1897} and consequently appears in the 1929 Ford book~\cite{Ford-1951}.
See also Lehner~\cite{Lehner-1960} and Tukia~\cite{Tukia1972}. The classical examples of this construction are {Schottky groups}. Using this technique,
Lehner~\cite{Lehner-1960} constructed subgroups of the modular group with arbitrary signature.

\begin{remark}
Notice that Dirichlet domains and Fricke's polygons for congruence subgroups are not, in general, related to independent sets of generators. For example, the classical fundamental polygon for the modular group is not; an alternative fundamental polygon using $\mathrm{PSL}(2,\Z) \cong \Z_2 * \Z_3$ is shown on the right of \Cref{fig:two modular polygons}.

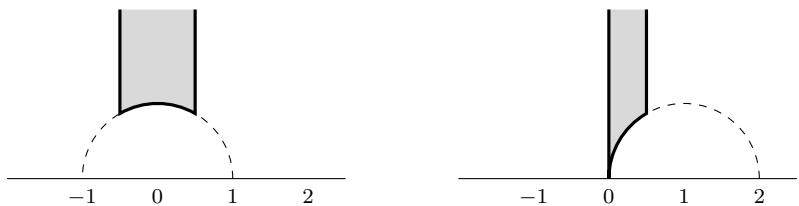
\begin{figure}[hbt]
    \begin{tikzpicture}[scale=1]
        \def\Inf{2.25}
        \draw (-2,0) -- (2.5,0);
        \foreach \x in {-1,0,1,2} \draw (\x,0) node [below] {\tiny$\x$};
        \draw [dashed] (1,0) arc (0:180:1);
        \draw [very thick, fill = black!15] (-1/2,\Inf) -- (120:1) arc (120:60:1) -- (1/2,\Inf);

        \begin{scope}[xshift=6cm]
        \draw (-2,0) -- (2.5,0);
        \foreach \x in {-1,0,1,2} \draw (\x,0) node [below] {\tiny$\x$};
        \draw [dashed] (2,0) arc (0:180:1);
        \draw [very thick, fill = black!15] (0,\Inf) -- (0,0) arc (180:120:1) -- (1/2,\Inf);
        \end{scope}
    \end{tikzpicture}
    \caption{Dirichlet (left) and quasi-ideal (right) fundamental polygons for the modular group in the half-plane model}
    \label{fig:two modular polygons}
\end{figure}
\end{remark}

\subsection{Canonical polygons}\label{sec:canonical}

Rademacher~\cite{Rademacher1929} asked for a construction of a set of independent generators for congruence subgroups $\G_{0}(N)$. This was answered in~\cite{Rademacher1929,Frasch-1932,Chuman-1973}, but those constructions were not related to fundamental polygons. 
Kulkarni~\cite{Kulkarni-1991} answered this question for all finite index subgroups in the modular group by construction of a special fundamental polygon whose side-pairing transformations form the set of independent generators.

Kulkarni's construction of special polygons for subgroups of the modular group mentioned above~\cite{Kulkarni-1991} is based on Farey symbols and is different from ours. In particular, if the number of cusps is greater than 1, the translation in the half-plane is included as one of the parabolic generators while we specifically exclude it, see~\cite[Figure 2.6]{Kulkarni-1991}, and the sides identified by hyperbolic generators are not necessarily consequent. Kulkarni's polygons were later used by Huang~\cite{Huang1999,Huang2009}, in the context of the realizability of subgroups of prescribed signature, and other authors. Kulkarni's special polygons are compiled from triangles in the Dedekind tessellation by fundamental domains for the action of the extended modular group generated by the modular group and the transformation $z\mapsto \bar z$,  while our construction does not have this restriction.

\begin{defn}\label{def:canonical} Given a signature $(g; m_1,m_2,\dots, m_r;t\ge 1)$, we call a convex polygon~$\Fc$ \emph{canonical} if the following properties hold:
\begin{enumerate}
    \item The number of sides of the polygon is $N:=4g+2r+2(t-1)$, all of infinite length. We choose the order of the sides in the counter-clockwise direction to correspond to the signature, starting from the vertex denoted by \mbox{$V_0=1$}, corresponding to one cusp which is always present. Vertices are labeled counter-clockwise, $V_0, V_1, \dots ,V_N=V_0$.
    \item $4g$ sides come in $g$ quadruples of complete adjacent geodesics. In each quadruple $k$ ($1 \le k \le g$), they are glued in the {commutator manner $b_k^{-1}a_k^{-1}b_ka_k$} by hyperbolic transformations from $\psu$. More precisely, the first side is glued to the third side by $a_k$, and the fourth side is glued to the second side by $b_k$.
    \item $2r$ sides are geodesic rays and come in $r$ pairs. In pair $i$ ($1 \le i \le r$) the rays meet with angle~$\frac{2\pi}{m_i}$ at a vertex and are glued by elliptic transformations. Each elliptic cycle consists of a single vertex.
    \item $2(t-1)$ sides come in $t-1$ pairs of complete adjacent geodesics; they are glued together by parabolic transformations; each parabolic cycle consists of a single vertex (cusp).
    \item\label{def:isom} Each glued side is (an arc of) an isometric circle for its side-pairing transformation.
    \item The following ideal vertices are {equally distributed} on~$\S$: \[ \qquad \arg(V_{4k})=\tfrac{2\pi}{\ell}k \text{ for }0\le k\le g,\,\, \arg(V_{4g+2j})=\tfrac{2\pi}{\ell}(g+j)\text{ for }1\le j\le r+t-1, \] where $\ell:=g+r+t-1$. (\Cref{fig:canonical polygon} has $\ell = 5$, which is the number of sectors.)
\end{enumerate}
\begin{figure}[hbt]
    \includegraphics[width=0.5\textwidth]{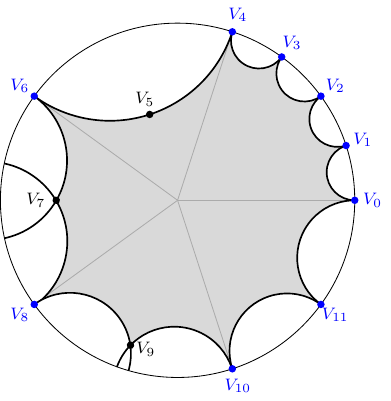}
    \caption{Canonical polygon for signature~$(1;2,3,7;2)$}
    \label{fig:canonical polygon}
\end{figure}
\end{defn}

\begin{remark}\label{rem:uniq} Property \eqref{def:isom} of the above definition implies that the glued sides are arcs of Euclidean circles of the same radius and the point closest to $0 \in \D$ in one circle is mapped to the point closest to $0$ in another \cite[Theorem 3.3.4]{Katok-book}, hence the side-pairing transformations are unique and thus the polygon is \emph{marked}. Therefore, for given signature $(g; m_1,m_2,\dots, m_r;t \ge 1)$,
\begin{itemize}
\item the canonical polygon $\Fc$ exists by a geometric construction (exact formulas for the side-pairing transformations are in Appendix~\ref{sec:explicit}),
\item the canonical polygon $\Fc$ is unique, and
\item the group $\Gamma$ generated by side-pairing transformations of $\Fc$ is unique.
\end{itemize}
\end{remark}

We prove that $\G$ is a Fuchsian group of the first kind with the given signature.

\begin{thm}\label{thm:cqi}Given a signature $(g; m_1,m_2,\dots, m_r;t \ge 1)$ and the marked canonical polygon~$\Fc$ of \Cref{def:canonical}, the  group~$\G$ generated by the side-pairing transformations of~$\Fc$ is a Fuchsian group with this signature and fundamental polygon~$\Fc$. The group~$\G$ is a free product of cyclic groups generated by the independent generators.   
\end{thm}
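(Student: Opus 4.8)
The plan is to verify the hypotheses of Poincaré's Polygon Theorem for the marked canonical polygon $\Fc_0$ and its side-pairing transformations, read off the resulting presentation, and then recognize it as the free product \eqref{freeproduct} by eliminating the cusp at $V_0$. First I would check that $\Fc_0$ is a genuine convex hyperbolic polygon with the prescribed side-pairing: Definition~\ref{def:canonical} already lists the sides in counter-clockwise order together with the transformations that pair them (commutator quadruples for genus, angle-$\frac{2\pi}{m_i}$ elliptic pairs, parabolic pairs of adjacent geodesics), and property~\eqref{def:isom} together with \Cref{rem:uniq} guarantees these pairings exist and are uniquely determined. So the side-pairing is well defined; what remains is to compute the vertex cycles and their angle sums.

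The heart of the argument is the cycle condition. I would show there are exactly three kinds of cycles. The $r$ elliptic cycles each consist of a single vertex (the apex of an elliptic pair) with cone angle $\frac{2\pi}{m_i}$, so the cycle condition $\sum \theta = \frac{2\pi}{m_i}$ holds and the side-pairing transformation has order $m_i$; this is essentially built into item~(3) of the definition. The $t$ parabolic cycles (the always-present cusp at $V_0$ together with the $t-1$ apexes of the parabolic pairs) each consist of a single ideal vertex with angle $0$, so the parabolic cycle condition is satisfied. The remaining vertices must form a single \emph{accidental} cycle of ideal vertices with total angle $0$ (again automatically satisfied, since all these vertices are on $\S$); the key combinatorial claim is that the side-pairing transformations thread all the remaining $V_{4k}$ and $V_{4g+2j}$ vertices into exactly one orbit, which one checks directly from the counter-clockwise gluing pattern $b_k^{-1}a_k^{-1}b_k a_k$ and the equidistribution in item~(6). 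With all cycle conditions verified, Poincaré's Theorem gives that $\G_0$ is discrete, $\Fc_0$ is a fundamental polygon for it, and $\G_0$ has presentation \eqref{presentation} with the $p_1$-generator corresponding to the cusp at $V_0$ \emph{not} appearing as a side-pairing transformation but recovered from the long relation.

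From the presentation, discreteness plus finite area of $\Fc_0$ (which is finite because $\Fc_0$ has finitely many sides and all ideal vertices, so its area is $(N-2)\pi$ minus the angle sum, hence finite) makes $\G_0$ a Fuchsian group of the first kind; its signature is $(g;m_1,\dots,m_r;t)$ by construction of the cycles. Finally, since $\G_0$ has the cusp at $V_0$, the generator $p_1$ can be eliminated from \eqref{presentation} exactly as in \Cref{sec:free product}, leaving only the elliptic relations $x_i^{m_i}=1$ and no others; this exhibits $\G_0$ as the free product $\Z_{m_1}*\cdots*\Z_{m_r}*\Z^{*(2g+t-1)}$ of \eqref{freeproduct}, with the surviving side-pairing transformations $x_i$, $a_k$, $b_k$, $p_2,\dots,p_t$ as the independent generators. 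The main obstacle I anticipate is the bookkeeping in the accidental-cycle computation: one must track carefully how the counter-clockwise ordering of sides interacts with the commutator gluing so that the accidental vertices genuinely form a single cycle (rather than several), and confirm that no elliptic or parabolic vertex accidentally merges into it — this is where the precise angle placement in item~(6) of \Cref{def:canonical} is doing real work and is most naturally handled by a direct trace through the explicit polygon of \Cref{sec:explicit}.
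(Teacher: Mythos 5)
Your route is structurally different from the paper's, and it has a genuine gap at the cusp condition. The paper does not begin with Poincar\'e's Polygon Theorem: it first invokes Tukia's free-combination theorem, whose condition~(2) --- each paired side of $\Fc_0$ is an arc of an isometric circle, which is property~\eqref{def:isom} of \Cref{def:canonical} --- already yields discreteness of $\G_0$, the free-product structure~\eqref{freeproduct}, and $\Fc_0 = \bigcap_j \Fc_j$ as a fundamental domain. Only then, with discreteness in hand, does the paper argue that the cycle transformation~\eqref{product} fixing $V_0$ must be parabolic (it fixes a cusp, and in a discrete group a hyperbolic and a parabolic element cannot share a fixed point). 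Poincar\'e's theorem is applied at the very end, after parabolicity is established, only to read off the signature via the Euler-characteristic count.

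In contrast, your proposal labels the $V_0$-cycle an ``accidental cycle'' with total angle $0$ and declares the corresponding Poincar\'e hypothesis ``automatically satisfied, since all these vertices are on $\S$.'' That is where the plan breaks. The $V_0$-cycle is not an accidental cycle of interior vertices as in a Fricke polygon; here it consists of $4g+r+t-1$ \emph{ideal} vertices, and Poincar\'e's theorem in the presence of ideal vertices requires a completeness (parabolicity) hypothesis at each such cycle. You must know beforehand that the cycle transformation~\eqref{product} is parabolic rather than hyperbolic; if it were hyperbolic, the translates of $\Fc_0$ would fail to tessellate near $V_0$, and neither discreteness nor the fundamental-domain property would follow. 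Nothing in \Cref{def:canonical} postulates parabolicity at $V_0$ --- the paper \emph{derives} it from the discreteness that Tukia's theorem supplies. So before invoking Poincar\'e's theorem you need an independent argument (for instance an explicit matrix computation using \Cref{sec:explicit}, or some other discreteness criterion) that the $V_0$-cycle word is parabolic. The single-orbit combinatorics you flag as the main obstacle is real but secondary; the parabolicity of the cycle word is the missing ingredient, and it is precisely what Tukia's theorem lets the paper sidestep.
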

The proof is based on free combinations of cyclic subgroups and a result of Tukia, which we review before starting the proof.

The canonical polygon~$\Fc$ is related to the free product~\eqref{freeproduct} and is obtained by combining Ford fundamental domains (sets outside of isometric circles) for cyclic subgroups, shown in \Cref{fig:free combinations} and described here: 
\begin{itemize}
    \item  A set outside of two geodesic rays meeting at a point in $\D$ with angle~$\frac{2\pi}{m}$ is a fundamental domain for a finite cyclic group of order $m$ generated by an elliptic M\"obius transformation mapping one geodesic ray to another.
    \item A set outside of two complete geodesics meeting at point in $\S=\partial\D$ is a fundamental domain for an infinite cyclic group generated by a parabolic M\"obius transformation mapping one geodesic to another.
    \item A set outside of two non-intersecting complete geodesics is a fundamental domain for an infinite cyclic group generated by a hyperbolic M\"obius transformation mapping one geodesic to another.
\end{itemize}

\begin{figure}[htb]
    \includegraphics[width=0.93\textwidth]{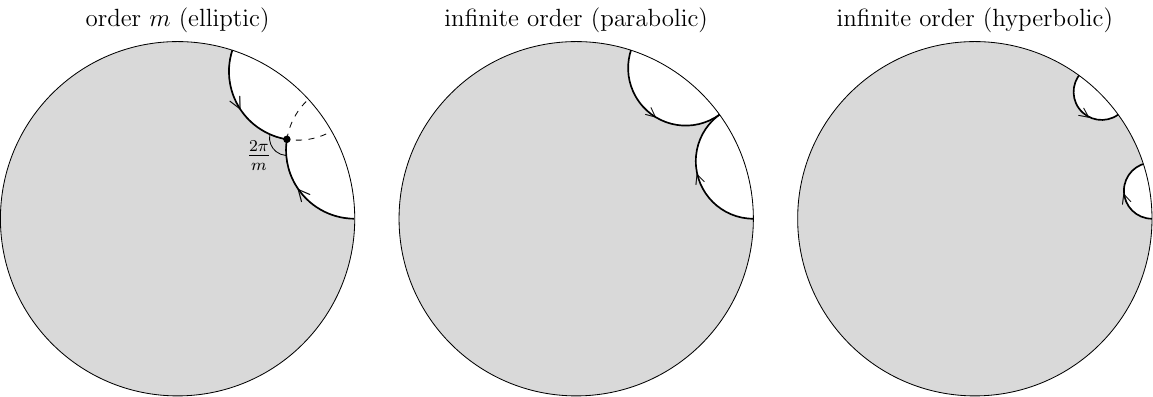}
    \caption{Examples of fundamental polygons used in free combinations}
    \label{fig:free combinations}
\end{figure}


\begin{defn}[Free combination]
Let $\G_j,\,j\in J,$ be a collection of  cyclic  Fuchsian groups, with fundamental polygons $\Fc_j$.
In each case the sides of $\Fc_j$ are identified by generator of $\G_j$ denoted $T_j$.
If \begin{equation} \label{eq:cl sub int} \Cl_\D( \D\setminus \Fc_i  )\subset \Int_\D(\Fc_k)\text{ for }i\ne k, \end{equation} we say that the quadruple $(\G_j, \Fc_j, T_j, J)$ forms a \emph{free combination of cyclic groups}.
\end{defn}

\begin{thm*}[{Tukia~\cite[Theorem~2.6]{Tukia1972}}] Let $(\Gamma_j,\mathcal F_j,T_j,J)$ form a free combination, and $\Gamma$ be the group generated by $\{ \Gamma_j:j \in J \}$. Then the group $\Gamma$ is a free product of groups $\Gamma_j$. Moreover, if either
\begin{enumerate} 
\item the set of fixed points of all $T\in\G$ of infinite order is dense in~$\S$, or
\item the sides of $\mathcal F_j$ are arcs of isometric circles of $T_j$ and $T_j^{-1}$,
\end{enumerate} 
then $\Gamma$ is a discrete subgroup of $\psu$ with fundamental domain $\mathcal F = \bigcap_{j \in J} \mathcal F_j$. \end{thm*}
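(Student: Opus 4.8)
This result is a version of the Klein--Maskit combination theorem, and the proof is a ping-pong (``table-tennis'') argument anchored on the nesting hypothesis~\eqref{eq:cl sub int}. Write $X_j:=\D\setminus\Fc_j$. Hypothesis~\eqref{eq:cl sub int} reads $\Cl_\D(X_i)\subset\Int_\D(\Fc_k)$ for $i\ne k$; since $\Int_\D(\Fc_k)\cap\Cl_\D(X_k)=\varnothing$, the family $\{X_j\}_{j\in J}$ is pairwise disjoint and moreover $X_j\subset\Int_\D(\Fc_k)$ whenever $j\ne k$. Set $\mathcal F:=\bigcap_{j\in J}\Fc_j$; the nesting condition makes $\Int_\D(\mathcal F)$ nonempty, and $\D\setminus\mathcal F=\bigsqcup_j X_j$. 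To get the free product, fix $x_0\in\Int_\D(\mathcal F)$ (an interior point, so it is fixed by no nontrivial element of any $\Gamma_j$). Given a reduced word $g=h_1h_2\cdots h_n$ with $h_\ell\in\Gamma_{j_\ell}\setminus\{\id\}$ and $j_\ell\ne j_{\ell+1}$, I would prove by induction on the syllables, peeling from the inside, that $g(x_0)\in X_{j_1}$: since $\Fc_{j_n}$ is a fundamental domain for the cyclic group $\Gamma_{j_n}$ and $x_0\in\Int_\D(\Fc_{j_n})$, the point $h_n(x_0)$ lies outside $\Fc_{j_n}$, i.e.\ in $X_{j_n}$; as $j_{n-1}\ne j_n$ we have $X_{j_n}\subset\Int_\D(\Fc_{j_{n-1}})$, so the same step applies to $h_{n-1}$, and so on. Then $g(x_0)\in X_{j_1}$ while $x_0\in\Fc_{j_1}$, so $g(x_0)\ne x_0$ and hence $g\ne\id$; this proves $\Gamma=\ast_{j\in J}\Gamma_j$. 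The common base point $x_0\in\Int_\D(\mathcal F)$, lying outside every $X_j$, is what removes the usual order-two exception of the ping-pong lemma.

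Next I would show that $\mathcal F$ is a fundamental domain for $\Gamma$. Running the same peeling argument with the set $\Int_\D(\mathcal F)$ in place of the point $x_0$, and using that in a cyclic tessellation a translate $T(\Int_\D\Fc_j)$ with $T\ne\id$ is disjoint from the closed domain $\Fc_j$, gives $g(\Int_\D\mathcal F)\subset X_{j_1}$ for every nontrivial reduced $g$; since $\Int_\D\mathcal F\subset\Fc_{j_1}$ this yields $g(\Int_\D\mathcal F)\cap\Int_\D\mathcal F=\varnothing$, so the $\Gamma$-translates of $\mathcal F$ have pairwise disjoint interiors. For the covering property I would verify that $\mathcal D:=\bigcup_{g\in\Gamma}g(\Cl_\D\mathcal F)$ is both open and closed in $\D$, hence equal to $\D$ by connectedness: openness is a side-and-vertex bookkeeping argument (across an interior side of a tile $g\mathcal F$, lying on $g(\partial\Fc_j)$ for a unique $j$, sits the neighbouring $\Gamma_j$-tile; the elliptic cone vertices are treated the same way, and the ideal vertices lie on $\S$ and so are irrelevant), while closedness is equivalent to local finiteness of the tessellation. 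Equivalently, one argues by ``pushing in'': any $x\notin\mathcal F$ lies in a unique $X_j$ and is carried into $\Fc_j$ by a nontrivial element of $\Gamma_j$; iterating produces a reduced word and, by the same local-finiteness input, terminates at a point of $\mathcal F$.

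The step I expect to be the main obstacle is exactly this local finiteness of $\{g\mathcal F\}_{g\in\Gamma}$ in $\D$, equivalently proper discontinuity of the $\Gamma$-action, and it is precisely here that one of the two alternative hypotheses is indispensable (a free combination with neither can fail to be discrete -- e.g.\ two half-turns whose product is an irrational rotation). Under hypothesis~(2), that the sides of $\Fc_j$ are arcs of the isometric circles of $T_j$ and $T_j^{-1}$, I would invoke the Ford-domain mechanism: the isometric circles of the elements of $\Gamma$ have Euclidean radii tending to $0$, so only finitely many of them -- and hence only finitely many tiles $g\mathcal F$ -- meet any compact subset of $\D$, giving local finiteness. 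Under hypothesis~(1), that the fixed points of the infinite-order elements of $\Gamma$ are dense in $\S$, I would instead show that any accumulation of distinct translates $g_n\mathcal F$ inside $\D$ would be trapped in a nested chain of the sets $X_j$ whose initial syllables run off toward $\S$ along the fixed points of the infinite-order cyclic factors, forcing the accumulation onto $\S$ rather than into $\D$. In either case $\mathcal F$ is a locally finite polygon, so by the standard argument underlying Poincar\'e's polygon theorem $\Gamma$ acts properly discontinuously and is therefore a discrete subgroup of $\psu$; combined with the disjoint interiors and the covering property, $\mathcal F=\bigcap_{j\in J}\Fc_j$ is a fundamental domain for $\Gamma$, which completes the proof.
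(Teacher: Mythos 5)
The paper does not prove this statement: it is quoted as Theorem~2.6 of Tukia's 1972 paper and used as a black box in the proof of \Cref{thm:cqi}, so there is no in-paper argument to compare yours against; I can only judge the proposal on its own terms. The first half is essentially right and is the standard Klein--Maskit-style ping-pong. With $x_0\in\Int_\D(\mathcal F)$, the peeling argument does give $g(x_0)\in X_{j_1}$ for every nontrivial reduced word, hence the free product and the pairwise disjointness of the translates of $\Int_\D(\mathcal F)$. Two small points should be made explicit: (a) ``$\Fc_j$ is a fundamental domain'' only yields $h(\Int_\D\Fc_j)\cap\Int_\D\Fc_j=\varnothing$, whereas you need the stronger containment $h(\Int_\D\Fc_j)\subset X_j$ for every nontrivial $h\in\G_j$ --- true for the sector, horoball and isometric-circle domains actually in play, but it is a property of these specific $\Fc_j$, not of fundamental domains in general; (b) $\Int_\D(\mathcal F)\ne\varnothing$ needs a word (the sets $\Cl_\D(X_j)$ are pairwise disjoint closed sets, and $\D$ is connected).

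The second half has a genuine gap, and your framing of where the two alternative hypotheses enter is off. Note first that once the translates of the nonempty open set $\Int_\D(\mathcal F)$ are pairwise disjoint, discreteness of $\G$ is already automatic (a sequence $g_n\to\id$ with $g_n\ne\id$ would eventually move a small ball inside $\Int_\D(\mathcal F)$ back into $\Int_\D(\mathcal F)$); so your parenthetical ``counterexample'' cannot occur --- two half-turns about distinct points of $\D$ compose to a hyperbolic element, never an irrational rotation, and a configuration generating a non-free-product group is by the first half not a free combination. The actual role of hypotheses (1) and (2) is the covering property $\bigcup_{g\in\G}g(\Cl_\D\mathcal F)=\D$, equivalently the termination of your ``pushing in'' procedure, equivalently that the nested chains $X_{j_1}\supset h_1X_{j_2}\supset h_1h_2X_{j_3}\supset\cdots$ have no interior limit points in $\D$; without one of them $\mathcal F$ may be a packing domain that is strictly smaller than a fundamental domain. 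Your treatment of this step is not yet a proof under either hypothesis: under (2) you invoke ``the isometric circles of the elements of $\G$ have Euclidean radii tending to $0$,'' but that is essentially the conclusion and must be derived from the nesting of the generators' isometric disks (e.g.\ via the radius identity for a product together with an induction showing the isometric disk of a reduced word sits strictly inside that of its first syllable); under (1) your sketch never uses the stated hypothesis that fixed points of infinite-order elements are dense in $\S$, which is exactly what forces the intersection of the nested chain of boundary arcs to degenerate to a point of $\S$. As written, the argument would ``prove'' the fundamental-domain conclusion with neither hypothesis, which is why this step cannot be waved through.
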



\begin{proof}[Proof of Theorem~\ref{thm:cqi}]
The sides of the canonical polygon~$\Fc$ can be identified as the sides of fundamental domains of cyclic subgroups of $\G$ as follows:
\begin{itemize}
    \item $g$ quadruples of adjacent complete geodesics define fundamental domains $\Fc_k$ ($1\le k\le g$) for a free group generated by hyperbolic transformations $a_k$ and $b_k$ related to the genus. See also the top right of \Cref{fig:our free combinations} in the appendix.
    \item $r$ pairs of intersecting geodesic rays define
    fundamental domains  for finite cyclic subgroups each generated by an elliptic transformation $\gamma_{j}$ ($4g+1\le j\le 4g+r$);
    \item $t-1$ pairs of complete geodesics meeting at a point of~$\S$ define fundamental domains  for
    infinite cyclic parabolic subgroups generated by parabolic transformations $\g_j$ ($4g+r+1\le j\le 4g+r+t-1$) each fixing a non-congruent cusp different from~$V_0$.
\end{itemize}
The total number of ideal vertices congruent to $V_0$ is $4g+r+t-1$,  ordered counter-clockwise, 
\begin{equation}\label{idealverticesc1}
V_0, V_1,\dots,V_{4g}, V_{4g+2},\dots, V_{4g+2r}, V_{4g+2r+2},\dots, V_{4g+2r+2(t-1)}=V_0,
\end{equation}
and the product
\begin{equation}\label{product}
\prod_{j=4g+1}^{{4g+r+t-1}} \!\! \gamma_{j} \; \prod_{k=1}^g b^{-1}_k a^{-1}_k b_k a_k
\end{equation}
fixes $V_0$.

Relation~\eqref{eq:cl sub int} for the cyclic subgroups 
$\ang{a_k}, \ang{b_k}, \ang{\gamma_j}$  defined above
follows from our construction. Therefore, we have a free combination \begin{equation}\label{quadruple} (\Gamma_j,\mathcal F_j,T_j,J), \end{equation} where $J = \{1,2,...,2g+r+t-1\}$ and each $\G_j$ is either $\ang{a_j}$ or $\ang{b_j}$ or $\ang{\gamma_j}$. According to Tukia's Theorem, the group $\G$ is a free product of cyclic subgroups~\eqref{freeproduct}.  Since the sides of the polygon~$\Fc$ are isometric circles of the independent generators $\ang{a_j}, \ang{b_j}, \ang{\gamma_j}$, by condition (2) of Tukia's Theorem, $\G$ is a  discrete subgroup of $\psu$, and,
\begin{equation*}
    \Fc=\bigcap_{k=1}^\ell \Fc_k,
\end{equation*}
where $\Fc_k$ for $1 \le k \le g$ are intersections of the fundamental polygons for~$\ang{a_k}$ and~$\ang{b_k}$ (see also~\Cref{fig:our free combinations} in the appendix for our $\Fc_k$). Equivalently,~$\Fc$ can be represented as union of ``building blocks,''
\begin{equation*}
    \Fc = \bigcup_{k=1}^\ell \mathcal B(k), 
\end{equation*}
where
\begin{equation*} \label{eq:bb}
\begin{aligned} 
\mathcal B(k) = \{ z \in \mathcal F_{k} :&\, \arg(V_{4(k-1)}) \le \arg(z) \le \arg(V_{4k}) \text{ for } 1\le k\le g\text{, and }\\
&\arg(V_{4g+2(j-1)}) \le \arg(z) \le \arg(V_{4g+2j}) \text{ for } 1\le j\le r+t-1\}\hspace*{-0.5cm}
\end{aligned}
\end{equation*}
are determined using ideal vertices of~\eqref{idealverticesc1}.\footnote{Inequalities between arguments refer to counter-clockwise cyclic order on the circle throughout the paper.}

The hyperbolic area of~$\Fc$ is finite and is equal to 
\[
2\pi\left(2g-2+t+\sum_{i=1}^r \left(1-\tfrac1{m_i}\right)\right)>0
\]
by~\eqref{eq:area}. Therefore, $\G$ is a finitely generated Fuchsian group of the first kind by \cite[Theorem 4.5.2]{Katok-book}.
The polygon~$\Fc$ has $r$ elliptic cycles consisting of a single elliptic vertex with corresponding angles $\frac{2\pi}{m_i}, 1\le i\le r$, and $t-1$ parabolic cycles consisting of a single cusp different from~$V_0$, and one parabolic cycle of $4g+r+t-1$ ideal vertices congruent to~$V_0$. Each cusp, including~$V_0$, is fixed by a parabolic element of~$\G$~\cite[Theorem~4.2.5]{Katok-book}. By discreteness of $\G$, hyperbolic and parabolic elements in $\G$ cannot have a common fixed point~\cite[proof of Theorem 2.4.3]{Katok-book}, hence the product~\eqref{product}, which fixes~$V_0$, is parabolic.

The quotient space $\G\backslash\D$ is decomposed into $r+t$ vertices, $r+2g+(t-1)$ edges, and~$1$ simply connected face. By Euler's formula, its genus satisfies
\[ 2-2g = (r+t) - (r+2g+(t-1))+1 \]
as required. By Poincar\'e's Polygon Theorem,~$\G$~is a Fuchsian group of the first kind with the given signature.
\end{proof}

The explicit construction of the canonical polygon~$\Fc$ for a signature is contained in~Appendix~\ref{sec:explicit}. Although exact formulae were not used in the proofs of Theorems~\ref{thm:bijectivity} and~\ref{thm:attractor}, they were used in computer simulations, and we include them in case they will be useful to others.

\section{Teichm\"uller space}\label{sec:Teich}

\subsection{Fenchel--Nielsen maps}\label{sec:F-N}
There are many equivalent definitions of the Teichm\"uller space of a Riemann orbifold. The following interpretation as a representation space of a Fuchsian group is the most appropriate for our purposes:
\begin{defn}[{\cite[Section 5.4]{Kapovich-book}}] \label{Teich-representation}
Let~$\G$ be a finitely generated Fuchsian group of the first kind. The \emph{Teichm\"uller space}~$\mathcal T(\G)$ is the space of injective  type-preserving representations $\G\to \psu$ with discrete image up to conjugation by an element of $\psu$.
\end{defn}
If two groups $\Gamma_1,\Gamma_2$ have the same signature, there is  a \emph{type-preserving} group isomorphism~$\varphi:\G_1\to\G_2$ and an orientation-preserving  homeomorphism $h:\Gamma_1\backslash\D\to\Gamma_2\backslash\D$ that carries 
bijectively cusps of $\Gamma_1\backslash\D$ to cusps of $\Gamma_2\backslash\D$ fixed by corresponding parabolic elements. 
It follows from the theory of branched covering spaces that
$h$ can be lifted to an orientation-preserving homeomorphism $h:\D\to\D$ such that 
$\Gamma_2=h\circ\Gamma_1\circ h^{-1}$. This is equivalent to the fact that 
the isomorphism~$\varphi$ is \emph{geometric}, i.e., is induced by a homeomorphism $h$,
\begin{equation}\label{eq:geometric}
h(T(z))=\varphi(T)(h(z)) \text{ for any }z\in\D\text{ and }T\in\Gamma_1.
\end{equation} 

Fenchel and Nielsen proved that~$h$ can be extended to the boundary of~$\D$.
The following result is an extension of their theorem; we state it and remark on the historical development afterwards.
\begin{thm}[Fenchel--Nielsen] \label{thm:Deform}
Let $\G_1\in \mathcal T(\G)$. The map $h$ can be extended to the boundary $\S$ and chosen quasi-conformal and isotopic to the identity map.
That is, there is a family of maps $h_t:\bar\D\to\bar\D,\,0\le t\le 1$, with $h_0=\id, h_1=h$ and a family of Fuchsian groups $\G_t$ such that $\G_t=h_t\circ\G\circ h_t^{-1}\in\mathcal T(\G)$.
\end{thm}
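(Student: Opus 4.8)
The plan is to prove \Cref{thm:Deform} by combining the classical Fenchel--Nielsen extension theorem with the deformation theory packaged in the preceding discussion. The starting data is a representation $\G_1 = \rho(\G) \in \mathcal T(\G)$ coming from a type-preserving isomorphism $\varphi\colon\G\to\G_1$. By the discussion around \eqref{eq:geometric}, $\varphi$ is geometric: there is an orientation-preserving homeomorphism $h\colon\D\to\D$ with $h\circ T\circ h^{-1} = \varphi(T)$ for all $T\in\G$. The first step is to upgrade the interior homeomorphism $h$ to a boundary extension $h\colon\bar\D\to\bar\D$. This is exactly the Fenchel--Nielsen theorem in its original form; the key input is that a quasi-conformal (or even just a $\G$-equivariant orientation-preserving) homeomorphism of $\D$ conjugating one Fuchsian group of the first kind to another extends continuously to a homeomorphism of the closed disk, carrying the limit set $\S$ of $\G$ to the limit set $\S$ of $\G_1$ equivariantly. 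One can either invoke this directly or, in the quasi-conformal case, use the Beurling--Ahlfors/Douady--Earle boundary extension.

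The second step is to produce the isotopy. Here I would use the standard fact that the Teichm\"uller space $\mathcal T(\G)$ is connected (indeed homeomorphic to a cell, by \eqref{eq:area}-type dimension counts, or by Fricke coordinates), so there is a path $s\mapsto \rho_s$, $0\le s\le 1$, in $\mathcal T(\G)$ from the identity representation $\rho_0 = \id$ to $\rho_1 = \varphi$. For each $s$ the representation $\rho_s$ is again geometric, so it is realized by an equivariant homeomorphism $h_s\colon\bar\D\to\bar\D$ with $h_s\circ T\circ h_s^{-1} = \rho_s(T)$, and we may normalize (by the conjugation freedom in \Cref{Teich-representation}, using three boundary points or one interior point plus a direction) so that $h_0 = \id$ and $s\mapsto h_s$ depends continuously on $s$. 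Setting $\G_s := h_s\circ\G\circ h_s^{-1} = \rho_s(\G)\in\mathcal T(\G)$ gives the required family, after reparametrizing the path by $t$. An alternative, more self-contained route --- better suited to this paper's \Cref{sec:Teich} --- is to build the path concretely using the Fenchel--Nielsen twist and length coordinates attached to a pants (or, in our setting, a quasi-ideal) decomposition: deform the canonical polygon's side-pairing data continuously to that of $\G_1$ as in the ``\Tdef{}'' of \Cref{def:canonical}, and let $h_t$ be the induced equivariant map.

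The third step is the quasi-conformality and the isotopy-to-identity assertion. Connectedness alone only gives a topological isotopy; to get that each $h_t$ is quasi-conformal one uses that along a smooth path in $\mathcal T(\G)$ the Teichm\"uller (or Fenchel--Nielsen) deformation maps have uniformly bounded Beltrami coefficients --- this is where the Ahlfors--Bers measurable Riemann mapping theorem enters, giving quasi-conformal solutions depending holomorphically (hence continuously and boundedly) on the complex structure parameter. Since $h_0 = \id$ and the family varies continuously, $h = h_1$ is quasi-conformally isotopic to the identity through the $h_t$, which is the statement.

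The main obstacle is the boundary extension together with the \emph{continuity in $t$ up to the boundary}: constructing an interior isotopy is routine from connectedness of $\mathcal T(\G)$, but one must ensure the extensions $h_t|_\S$ move continuously and that the normalization removing the $\psu$-conjugation ambiguity can be made consistently along the whole path (so that $h_0=\id$ honestly holds, not just up to conjugation). For groups with cusps there is the extra subtlety that type-preservation must be maintained --- parabolics stay parabolic along the path, so no cusp is ``opened up'' into a hyperbolic or pinched --- which is precisely why we restrict to $\mathcal T(\G)$ as defined in \Cref{Teich-representation} and why the isomorphism $\varphi$ was required to be type-preserving in \eqref{eq:geometric}. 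Handling this cleanly is where I would spend the most care; everything else follows from standard Teichm\"uller theory (Fricke coordinates, the measurable Riemann mapping theorem, and the classical Fenchel--Nielsen boundary extension).
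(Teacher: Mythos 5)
The paper does not actually prove \Cref{thm:Deform}: it is quoted as a classical result, and \Cref{F-N-remarks} defers entirely to the literature (Fenchel--Nielsen, Macbeath, Bers, and Tukia for the boundary extension in full generality; Bers for quasi-conformality; contractibility of $\mathcal T(\G)$ for the isotopy, constructed analytically by continuously varying the complex dilatation of $h$). Your sketch assembles exactly these ingredients --- geometric realization of the type-preserving isomorphism, boundary extension, a path in the contractible space $\mathcal T(\G)$ normalized against the $\psu$-ambiguity, and Ahlfors--Bers for quasi-conformality --- so it follows the same route the paper cites, and correctly flags the genuine technical points (boundary continuity in $t$ and consistent normalization along the path) that the cited references handle.
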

We call the map from \Cref{thm:Deform} a \emph{Fenchel--Nielsen map}.

\begin{remark}\label{F-N-remarks}~\begin{enumerate}[label=(\roman*)]
\item Fenchel--Nielsen's Theorem for co-compact~$\G$ (signature $(g;-)$) is contained in their famous manuscript~\cite{Fenchel-Nielsen2003} which, although unpublished until 2003, has had great
influence on the theory of Fuchsian groups. Their approach to the
theorem is through their intersecting axes theorem. Macbeath~\cite{Macbeath1967} generalized it for co-compact~$\G$ with elliptic elements, and Bers~\cite{Bers1972} covered the case of arbitrary signature using quasi-conformal maps. Tukia~\cite[Theorem~3.5 and Corollary~3.5.1]{Tukia1972} rediscovered the significance of the intersecting
axes property and gave a proof of this result in full generality. See also~\cite{Maskit-1976,Maskit-1977,Marden1976} and the bibliographies therein.
\item The Fenchel--Nielsen map $h$ in \Cref{thm:Deform} can be chosen quasi-conformal \cite{Bers1972}. The isotopy to identity is due to contractibility of $\mathcal T(\G)$.
A purely topological proof of this theorem for orbifolds was given by Marden \cite{Marden-1969}. For proofs using analytic methods in which
isotopy is constructed by continuously varying complex dilatation of $h$, see~\cite{Ahlfors-Bers-1960, Tukia-Acta-1985, Douady-Earle-1986, Earle-McMullen-1988}).
    \item The map $h|_\S$ is an orientation-preserving homeomorphism of~$\S$ and therefore preserves the order of the points in the circle.
    \item $h|_\S$ is always $\alpha$-H\"older; and $\alpha=1$ if and only if $h|_\S$ is a M\"obius transformation. If~$h|_\S$ is Lipschitz then it is absolutely continuous. See~\cite{BowenSeries79, Sorvali72}. 
\end{enumerate}
\end{remark}

An alternative definition of the Teichm\"uller space $\mathcal{T}(\G)$ \cite{Bers1972,Earle-McMullen-1988} is shown below; it will be useful at the end of this section.

\begin{defn} \label{defn teich} $\mathcal{T}(\G)$ is the space of pairs $(\G',h)$ where $h:\bar \D\to\bar \D$ is a quasi-conformal Fenchel--Nielsen map such that $\G' = h\circ\G\circ h^{-1}$ modulo the equivalence relation $(\G'_1,h_1)\sim (\G'_2,h_2)$ if there is a conformal map $\mu:\bar \D\to\bar \D$ such that $h_2^{-1}\circ \mu \circ h_1$ is the identity on~$\S$.
\end{defn}

\subsection{\Tdef{} of the canonical polygon}\label{sec:Tdef}

In this section and \Cref{sec:main proof} we use $\Fc^*$ and $\Gamma^*$ for the canonical case and use non-starred versions for non-canonical.

\begin{defn} 
Given a canonical fundamental polygon~$\Fc^*$ and a Fenchel--Nielsen map $h:\bar\D \to \bar\D$, we call the \emph{\Tdef{}} of~$\Fc^*$ the new polygon~$\Fc$ formed by connecting the images of the vertices of $\Fc^*$ under $h$ with geodesic arcs.
\end{defn}
The following theorem shows that this construction is possible. Note that~$\Fc$ is \emph{not} the image of~$\Fc^*$ under~$h$.

\begin{thm} \label{thm:FG-qip} Any Fuchsian group of the first kind with at least one cusp has a fundamental polygon, obtained by \Tdef{} of the canonical polygon, such that the side-pairing transformations of the polygon generate the group.
\end{thm}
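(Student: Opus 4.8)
The plan is to combine \Cref{thm:cqi}, Lehner's theorem, and the Fenchel--Nielsen theorem (\Cref{thm:Deform}) as follows. Let $\G$ be a Fuchsian group of the first kind with signature $(g;m_1,\dots,m_r;t)$ with $t\ge 1$. By \Cref{thm:cqi} there is a canonical polygon $\Fc_0$ for this signature whose side-pairing transformations generate a canonical Fuchsian group $\G_0$ of the same signature, and $\G_0$ is a free product of cyclic groups on the independent generators. Since $\G$ and $\G_0$ have the same signature, there is a type-preserving isomorphism $\varphi:\G_0\to\G$; hence $\G\in\mathcal T(\G_0)$ (after conjugating so the representation lands in $\psu$), and \Cref{thm:Deform} produces a Fenchel--Nielsen map $h:\bar\D\to\bar\D$ with $\G = h\circ\G_0\circ h^{-1}$ and $h(T(z))=\varphi(T)(h(z))$ for all $T\in\G_0$, $z\in\D$. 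Define $\Fc$ to be the \Tdef{} of $\Fc_0$ by $h$, i.e.\ the polygon with ideal/elliptic vertices $h(V_0),\dots,h(V_N)$ joined by geodesic arcs.

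Next I would verify that $\Fc$ is a fundamental polygon for $\G$ whose side-pairing transformations are the images under $\varphi$ of those of $\Fc_0$. The key point is that $h|_\S$ is an orientation-preserving homeomorphism of $\S$ (\Cref{F-N-remarks}(iii)), so it preserves the cyclic order of the vertices $V_0,\dots,V_N$; since a geodesic in $\D$ is determined by its two ideal endpoints, $\Fc$ is again a convex geodesic polygon with the same combinatorial side-pairing pattern as $\Fc_0$ (commutator quadruples, elliptic pairs, parabolic pairs). For a side $s$ of $\Fc_0$ with endpoints $V_i,V_j$ paired to side $s'$ with endpoints $V_{i'},V_{j'}$ by a generator $T$, the element $\varphi(T)$ maps the pair $\{h(V_i),h(V_j)\}$ to $\{h(V_{i'}),h(V_{j'})\}$ by \eqref{eq:geometric}, hence maps the corresponding side of $\Fc$ to its partner; thus the $\varphi$-images of the side-pairing transformations of $\Fc_0$ pair the sides of $\Fc$. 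One should also check the elliptic angle condition is preserved: since $\varphi$ is type-preserving, $\varphi(\g_i)$ is elliptic of the same order $m_i$, and the angle of $\Fc$ at $h(V)$ around an elliptic fixed point equals $2\pi/m_i$ because $h$ conjugates the local rotation action; similarly parabolic vertices of $\Fc_0$ go to parabolic cusps of $\Fc$. Then Poincaré's Polygon Theorem applies to $\Fc$ with these side-pairings and relations (which are conjugate under $\varphi$ to those of $\G_0$), so the group generated is discrete with fundamental polygon $\Fc$; since that group is $\varphi(\G_0)=\G$, we are done.

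The main obstacle — and the step to treat carefully rather than wave at — is showing that $\Fc$ is \emph{simple} (embedded) and that $\bigcup_{T\in\G}T\Fc$ tiles $\D$, i.e.\ that the abstract combinatorial side-pairing actually produces an honest fundamental polygon. Because $h$ is only quasi-conformal and its geodesic-hull deformation $\Fc$ is not $h(\Fc_0)$, one cannot transport the tiling property directly. Two routes are available: (a) invoke the isotopy $h_t$ from \Cref{thm:Deform}, so that $\Fc_t := \Tdef$ of $\Fc_0$ by $h_t$ is a continuous family of polygons with $\Fc_0$ embedded and all $\Fc_t$ inheriting the same vertex cyclic order; an open-and-closed / degree argument along $t$ then shows each $\Fc_t$ stays embedded and that the side-pairing cycle conditions of Poincaré's theorem (angle sums at elliptic and accidental vertices, the parabolicity of the $V_0$-cycle product \eqref{product}) persist, since they are conjugation-invariant under $\varphi_t$; or (b) verify Poincaré's hypotheses for $\Fc$ intrinsically from the conjugation relation $\G = h\G_0 h^{-1}$, using that $\varphi$ is type-preserving to get the cycle relations for free, and checking local injectivity of the tiling at each vertex orbit from the corresponding local picture for $\Fc_0$. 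I expect route (a) is cleanest, reducing everything to the canonical case established in \Cref{thm:cqi} plus a connectedness argument, with the bookkeeping of the building-block decomposition $\Fc_0=\bigcup_k\mathcal B(k)$ carrying over sector-by-sector under $h_t$.
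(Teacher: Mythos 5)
Your overall strategy matches the paper's (use \Cref{thm:Deform} to get a Fenchel--Nielsen map $h$ with $\G = h\circ\G_0\circ h^{-1}$, connect the deformed vertices $h(V_k)$ by geodesics, and appeal eventually to Poincar\'e), and you are right to flag embeddedness as the nontrivial point. But there is a genuine gap in how you handle the elliptic vertices, and your proposed fix (route (a), a global open-and-closed/degree argument along the isotopy) both overshoots and misses where the actual work lies.

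The flaw is in the sentence ``$h|_\S$ preserves the cyclic order of the vertices $V_0,\dots,V_N$; since a geodesic in $\D$ is determined by its two ideal endpoints, $\Fc$ is again a convex geodesic polygon with the same combinatorial side-pairing pattern.'' This is only valid for sides joining two \emph{ideal} vertices. An elliptic vertex $V_k$ lies in the interior of $\D$, and $h(V_k)$ is an interior point whose argument is not controlled by $h|_\S$ at all --- an orientation-preserving homeomorphism of $\bar\D$ can push an interior point to any argument while being close to the identity on~$\S$. So the assertion that the points $h(V_0),\dots,h(V_N)$ are in cyclic order (hence that $\Fc$ is an embedded convex polygon) does not follow from order-preservation on the boundary. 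This is exactly the content of the paper's proof: for an elliptic $V_k$ of order $m$ lying between ideal vertices $V_{k-1},V_{k+1}$, one conjugates the elliptic generator by $h$ to obtain $\tilde\gamma = h\gamma h^{-1}$, observes that $\tilde\gamma$ is elliptic of order $m$ fixing $V'_k := h(V_k)$ and carries the geodesic ray $(V'_{k-1},V'_k)$ to $(V'_{k+1},V'_k)$, so the clockwise angle between those rays at $V'_k$ is a multiple of $2\pi/m$; isotopy to the identity rules out jumping to another multiple, so it equals $2\pi/m\le\pi$. If $\arg V'_k$ were $\ge \arg V'_{k+1}$ (or $\le \arg V'_{k-1}$), elementary geometry of intersecting geodesics shows the clockwise angle would exceed $\pi$, a contradiction. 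That local, vertex-by-vertex argument is what replaces your heavier degree argument, and it is the missing step in your sketch. Once the cyclic order of the $V'_k$ is established, the remaining Poincar\'e/Tukia verification (your route (b)) is essentially deferred in the paper to \Cref{def:qi} and \Cref{thm:qip-FG}, so you should cite those rather than redo them inside this proof.
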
 

\begin{proof} 
Let~$\G$ be a Fuchsian group of the first kind with signature $(g;m_1,\dots,m_r;t\ge 1)$. As before, let~$\Fc^*$ be the canonical polygon with this signature, $\G^*$ the canonical Fuchsian group generated by side-pairing transformations of~$\Fc^*$, and $h:\bar\D\to\bar\D$ such that $\G=h\circ\G^*\circ h^{-1}$.

Let $\mathcal V^* := \{V^*_1,V^*_2,\dots, V^*_N\}$ be the set of all vertices of~$\Fc^*$ ordered by argument mod~$2\pi$.
Denoting $V_k := h(V^*_k)$ and $\mathcal V := \{V_1,V_2,\dots, V_N\}$, we will show that the points in $\mathcal V$ are also ordered by argument.

If $V^*_k$ and $V^*_{k+1}$ are two consecutive ideal vertices of~$\Fc^*$, then $\arg V^*_k<\arg V^*_{k+1}$, and since $h|_\S:\S\to\S$ is an orientation-preserving homeomorphism, $\arg V_{k}<\arg V_{k+1}$. 
Now let $V^*_k$ be an elliptic vertex of order $m$ such that $\arg V^*_{k-1}<\arg V^*_k<\arg V^*_{k+1}$ (notice that $V^*_{k-1}$ and $V^*_{k+1}$ are ideal vertices). Connect $V_{k-1}$ with $V_k$ and $V_k$ with $V_{k+1}$ by geodesic rays. 
By the previous argument, $\arg V_{k-1}<\arg V_{k+1}$. We now show that $\arg(V_{k-1}) < \arg(V_k) < \arg(V_{k+1})$.

Let $\gamma^* \in\G^*$ be the elliptic element of order $m$ that fixes $V^*_k$ and maps geodesic ray $(V^*_{k-1},V^*_k)$ to geodesic ray $(V^*_{k+1},V^*_k)$. The angle between these geodesic rays in the clockwise direction is equal to $\frac{2\pi}m$. By~\eqref{eq:geometric}, $\g := h\circ \gamma^*\circ h^{-1}\in \Gamma$ is also an elliptic element of order $m$, and $\g(V_{k-1})=V_{k+1}$,  $\g(V_k)=V_k$. Thus, $\g$ maps the geodesic ray $(V_{k-1},V_k)$ to the geodesic ray $(V_{k+1}, V_k)$. Since~$h$ is an orientation-preserving homeomorphism of $\bar\D$ isotopic to identity (\Cref{thm:Deform}), the angle between these geodesics in the clockwise direction cannot jump to a multiple of $\frac{2\pi}{m}$ and must be also $\frac{2\pi}{m}$.

Assume, by contradiction, that $\arg(V_k)\ge \arg(V_{k+1})$, as shown in \Cref{fig:contradiction}. Connect $V_{k-1}$ with $V_k$ and $V_k$ with $V_{k+1}$ with geodesics.
Since $\arg(V_{k+1}) > \arg(V_{k-1})$ and since the geodesic through $V_{k+1}$ and $V_k$ (which is an Euclidean circle orthogonal to~$\S$) intersects the geodesic through  $V_{k-1}$ and $V_k$ (which is also an Euclidean circle orthogonal to~$\S$) in $V_k$, the angle at $V_k$ between geodesic rays $V_{k-1}V_k$ and $V_{k+1}V_k$ in clockwise direction is greater than $\pi$, as shown in the right of \Cref{fig:contradiction}, a contradiction.
This proves $\arg(V_k) < \arg(V_{k+1})$. By similar reasoning, $\arg(V_{k-1}) < \arg(V_k)$ as well.
Therefore the vertices $V_1,V_2,...,V_N$ are also ordered by argument.
Connecting them in order by geodesics, we obtain a polygon~$\Fc$ with the same combinatorial structure as~$\Fc^*$ whose side-pairing transformations generate~$\G$ by~\eqref{eq:geometric}.
\end{proof}

\begin{figure}[htb]
\includegraphics[width=0.9\textwidth]{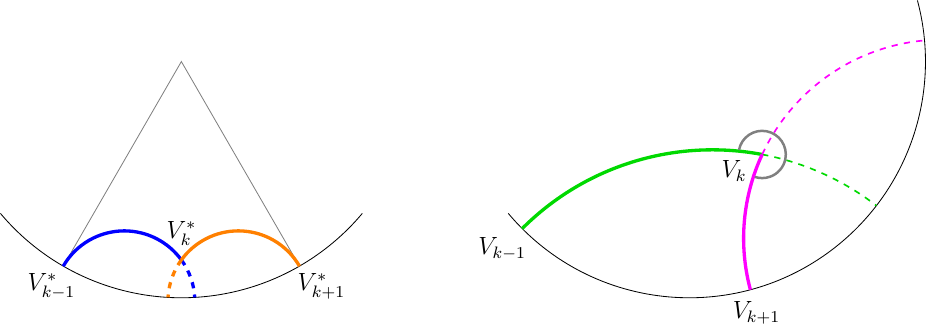}
\caption{Argument by contradiction for order of vertices}
\label{fig:contradiction}
\end{figure}

\begin{defn} \label{def:qi} The marked fundamental polygon constructed by \Tdef{} (\Cref{thm:FG-qip}) is called \emph{quasi-ideal}. The marking is given by the system of generators. It is characterized by the following properties:
\begin{enumerate}
    \item the number of sides of the polygon is $N:=4g+2r+2(t-1)$, all of infinite length;
    \item $4g$ sides come in $g$ quadruples of complete adjacent geodesics glued in the commutator manner by hyperbolic transformations from $\psu$;
    \item $2r$ sides come in $r$ pairs of geodesic rays glued by elliptic transformations (for $1\le i\le r$, the pair of rays meet with angle $\frac{2\pi}{m_i}$ at a vertex); 
    \item $2(t-1)$ sides come in $t-1$ pairs of complete adjacent geodesics glued by parabolic transformations; 
    \item[(5*)] the product of commutators of hyperbolic transformations and elliptic and parabolic transformations fixing $V_0$ is parabolic.
\end{enumerate}
Properties (1)--(4) hold by construction, and property (5*) holds since $h$ is a type-preserving homeomorphism.
\end{defn}

The following theorem may be considered a converse to \Cref{thm:FG-qip}:

\begin{thm}\label{thm:qip-FG} Given a signature $(g; m_1,m_2,\dots, m_r;t\ge 1)$ and a marked quasi-ideal polygon~$\Fc$, the group~$\G$ generated by the side-pairing transformations of~$\Fc$ is a Fuchsian group of the first kind with this signature, and~$\G$ is a free product of cyclic groups generated by the independent generators.
\end{thm}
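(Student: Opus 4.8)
The plan is to verify directly that $\Fc$ satisfies the hypotheses of Poincar\'e's Polygon Theorem, closely paralleling the proof of \Cref{thm:cqi}; the one new point is that a general quasi-ideal polygon need not satisfy the isometric-circle property~\eqref{def:isom}, so discreteness cannot be taken from condition~(2) of Tukia's Theorem and will instead come from Poincar\'e's Theorem itself. First I would record the combinatorial data. By properties~(1)--(4) of \Cref{def:qi}, $\Fc$ is an $N$-gon with $N=4g+2r+2(t-1)$ whose $N$ vertices are the $r$ elliptic vertices (interior points of~$\D$), the $t-1$ cusps where the parabolic pairs meet, and the $4g+r+t-1$ ideal vertices where complete geodesics meet, enumerated counter-clockwise as in~\eqref{idealverticesc1}. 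Since the sides are geodesic arcs and the ideal vertices occur in counter-clockwise cyclic order on~$\S$ (intrinsic to the notion of polygon used here, and in any case established in the proof of \Cref{thm:FG-qip} for the polygons produced there), $\Fc$ is a simple polygon, so the group~$\G$ and its side-pairings are well defined. The interior angle of~$\Fc$ equals $\tfrac{2\pi}{m_i}$ at the $i$-th elliptic vertex and $0$ at every ideal vertex, so by Gauss--Bonnet its hyperbolic area is
\[
(N-2)\pi-\sum_{i=1}^r\frac{2\pi}{m_i}=2\pi\left(2g-2+t+\sum_{i=1}^r\left(1-\tfrac1{m_i}\right)\right),
\]
which is finite and, by~\eqref{eq:area}, positive.

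Next I would check the hypotheses of Poincar\'e's Polygon Theorem. The side-pairing transformations are M\"obius, hence isometries, and each pairs two sides of the same type---two complete geodesics, or two geodesic rays from a common elliptic vertex---so the required matching of paired sides and of their (infinite) lengths is automatic. The vertex cycles depend only on the combinatorics of the pairing, which for~$\Fc$ is the same as for the canonical polygon; hence, exactly as in the proof of \Cref{thm:cqi}, the cycles are: $r$ elliptic cycles, each a single vertex with angle sum~$\tfrac{2\pi}{m_i}$ and cycle transformation the elliptic independent generator~$\g_i$ of order~$m_i$; $t-1$ parabolic cycles, each a single cusp whose cycle transformation is a parabolic independent generator (property~(4)); and one further cycle consisting of all $4g+r+t-1$ ideal vertices congruent to~$V_0$, whose cycle transformation is the product~\eqref{product} and is parabolic by property~(5*). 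There are no accidental cycles, and all $N$ vertices are accounted for.

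Poincar\'e's Polygon Theorem then yields that~$\G$ is discrete, that~$\Fc$ is a fundamental polygon for~$\G$, and that~$\G$ has the presentation with the side-pairings as generators and with relations only $\g_i^{m_i}=1$ from the elliptic cycles (the parabolic-type cycles contribute none); that presentation is exactly~\eqref{freeproduct}, so~$\G$ is the free product of the cyclic groups generated by its side-pairings, i.e.\ by the independent generators. (The free-product conclusion alternatively follows, as in \Cref{thm:cqi}, from Tukia's free-combination theorem applied to the cyclic subgroups attached to the building blocks of~$\Fc$, since the free-product part of that theorem needs neither of its two extra hypotheses.) Finally $\G\backslash\D$, obtained from~$\Fc$ by the side identifications, has $1$ face, $\tfrac N2=2g+r+t-1$ edges, and $r+t$ vertices, hence Euler characteristic~$2-2g$ and genus~$g$; the $r$ elliptic cycles give cone points of orders $m_1,\dots,m_r$, the $t$ parabolic cycles give $t$ cusps, and $\G$ is of the first kind because $\Fc$ has finite area. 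Thus~$\G$ is a Fuchsian group of the first kind with signature $(g;m_1,\dots,m_r;t)$.

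The step I expect to require the most care is the claim that~$\Fc$ is a simple polygon whose ideal vertices lie in genuine counter-clockwise order and that these ideal vertices form a single cycle with cycle transformation~\eqref{product}: both are purely combinatorial facts common to all quasi-ideal polygons and already verified in the canonical case, so once they are in hand the rest reduces to a routine application of Poincar\'e's Polygon Theorem and the Euler-characteristic count, just as at the end of the proof of \Cref{thm:cqi}.
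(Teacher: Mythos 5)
Your proof is correct, and it takes a genuinely different route from the paper on the key step of identifying the group structure. The paper, after using Poincar\'e's Polygon Theorem to get discreteness and the finite-area argument to get first kind, verifies condition~(1) of Tukia's Theorem by observing that the limit set is all of~$\S$ and hence that hyperbolic fixed points are dense; it then invokes Tukia to conclude the free-product structure. You instead extract the free product directly from the presentation that Poincar\'e's Polygon Theorem already supplies: the only vertex-cycle relations are the elliptic ones $\g_i^{m_i}=1$, since the $t-1$ single-cusp cycles and the one long ideal-vertex cycle at~$V_0$ (whose cycle transformation is parabolic by~(5*)) contribute no relators, so the presentation is exactly the free product~\eqref{freeproduct}. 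This is more direct, and it also correctly identifies an inefficiency in the paper's argument: as Tukia's Theorem is quoted in the paper, the free-product conclusion holds for any free combination, so neither of its extra hypotheses~(1) or~(2) is needed for that part --- the density argument is doing no work for the free product (it is only ever used in the ``Moreover'' clause, which Poincar\'e already supersedes here). Both routes also use Poincar\'e for discreteness and fundamental-domain status, and both close with the finite-area criterion for first kind and the Euler-characteristic count for the genus; your version just packs the free product into the same application of Poincar\'e rather than reaching for Tukia a second time.
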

\begin{proof} 
The quasi-ideal polygon $\Fc$ is a \Tdef{} of a canonical quasi-ideal polygon $\Fc^*$. We now denote the quadruple~\eqref{quadruple} for the canonical polygon by $(\G^*_j, \Fc^*_j, T^*_j, J)$. By construction, $\G_j = h \circ \G_j^* \circ h^{-1}$ are cyclic groups obtained from independent generators $T_j = h \circ T_j^* \circ h^{-1}$, with fundamental domains $\Fc_j$ that are \Tdef{} of $\Fc_j^*$. Therefore, the quadruple $(\G_j, \Fc_j, T_j, J)$ forms a free combination of cyclic groups.

The cycle relations, including the classical parabolic cusp condition (5*), are satisfied, so by Poincar\'e's Polygon Theorem~$\G$ is a Fuchsian group. The finiteness of the hyperbolic area of~$\Fc$ implies that it is of the first kind~\cite[Theorem~4.5.2]{Katok-book}, in which case the limit set of~$\G$ is the entire~$\S$. Since the limit set of~$\G$ is the closure of fixed points of the hyperbolic elements of~$\G$ \cite[Theorem~3.4.4]{Katok-book}, the closure of fixed points of the hyperbolic elements is already dense in~$\S$, hence condition (1) of Tukia's Theorem is satisfied and the result follows.
\end{proof}

The following theorem is not used for the proof of the main theorem, but it is of independent interest.
\begin{thm} \label{thm:teich-homeo}
Given a signature $(g;m_1,\dots,m_r;t\ge 1)$, the space of marked quasi-ideal polygons for that signature (up to orientation-preserving isometries of~$\D$) is homeomorphic to the Teichm\"uller space $\mathcal{T}(\G)$ of Fuchsian groups~$\G$ with this signature, and therefore to $\R^{6g-6+2(r+t)}$.
\end{thm}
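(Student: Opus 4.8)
The plan is to present both spaces as orbit spaces of the $\psu$-action and to construct mutually inverse continuous maps between them; the final identification with $\R^{6g-6+2(r+t)}$ is then a classical citation. I would fix once and for all the canonical polygon $\Fc_0$ for the given signature, the canonical group $\G_0$ generated by its side-pairing transformations, and an enumeration $\gamma_1,\dots,\gamma_M$ (with $M=2g+r+t-1$) of the independent generators of $\G_0$, thereby fixing Lehner's isomorphism $\G_0\cong\Z_{m_1}*\cdots*\Z_{m_r}*\Z^{*(2g+t-1)}$; since all groups of a given signature share the same Teichm\"uller space, it suffices to work with $\mathcal T(\G_0)$. Let $\mathcal Q$ be the space of marked quasi-ideal polygons for the signature, a point of which records the vertices $(V'_0,\dots,V'_{N-1})\in\bar\D^{N}$ and the independent generators $(T_1,\dots,T_M)\in\psu^{M}$ pairing the labeled sides, subject to the axioms of \Cref{def:qi} and with the subspace topology from $\bar\D^{N}\times\psu^{M}$; the group $\psu=\mathrm{Isom}^+(\D)$ acts diagonally (moving vertices, conjugating generators) and $\mathcal Q/\psu$ is the space in the statement. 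I would then define $\Phi\colon\mathcal Q/\psu\to\mathcal T(\G_0)$ by $\Fc\mapsto[\rho_\Fc]$, where $\rho_\Fc$ is determined by $\rho_\Fc(\gamma_i)=T_i$; the type constraints built into \Cref{def:qi} make $\rho_\Fc$ type-preserving and respect the defining relations, and by \Cref{thm:qip-FG} its image is a Fuchsian group equal to the free product of the $\ang{T_i}$, so $\rho_\Fc$ is injective with discrete image, i.e.\ $\rho_\Fc\in\mathcal T(\G_0)$; conjugating $\Fc$ conjugates $\rho_\Fc$, so $\Phi$ is well defined, and it is continuous because $(T_1,\dots,T_M)$ is part of the data of $\Fc$ and the topology on $\mathcal T(\G_0)$ is that of convergence of generator images.

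For the inverse, given $\rho\in\mathcal T(\G_0)$ I would put $\G'=\rho(\G_0)$, a Fuchsian group of the same signature, and use the discussion around~\eqref{eq:geometric} together with \Cref{thm:Deform} to obtain a Fenchel--Nielsen map $h$ with $\rho(\gamma)=h\gamma h^{-1}$. The boundary homeomorphism $h|_\S$ is \emph{uniquely} determined by $\rho$: any two such maps differ by an orientation-preserving self-homeomorphism of $\S$ centralizing $\G_0$, which therefore fixes the attracting fixed point of every hyperbolic element of $\G_0$; these points are dense in $\S$ (as $\G_0$ is of the first kind, cf.\ the proof of \Cref{thm:qip-FG}), forcing that homeomorphism to be the identity. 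I would then set $\Psi(\rho)$ to be the \Tdef{} of $\Fc_0$ by $h$, i.e.\ the polygon with vertices $h(V_k)$ and generators $\rho(\gamma_i)$; by \Cref{thm:FG-qip} this is a genuine marked quasi-ideal polygon, and replacing $\rho$ by a conjugate moves it by the corresponding isometry, so $\Psi$ descends to a map $\mathcal T(\G_0)\to\mathcal Q/\psu$. For continuity I would write each vertex of $\Psi(\rho)$ as an explicit algebraic function of $\rho$: an elliptic vertex is $\mathrm{Fix}(\rho(\gamma_i))$, and every ideal vertex equals $w(V_0)$ for a fixed word $w$ in the $\gamma_i$, while $V_0=\mathrm{Fix}(P)$ for the parabolic product $P$ of~\eqref{product}, so the corresponding vertex of $\Psi(\rho)$ is $\rho(w)\,\mathrm{Fix}(\rho(P))$, which is continuous in $\rho$ (the fixed point of a parabolic or elliptic element of $\psu$ depends continuously on it), as are the generators $\rho(\gamma_i)$.

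Next I would check $\Phi$ and $\Psi$ are mutually inverse. For $\Phi\circ\Psi=\mathrm{id}$: by \Cref{thm:FG-qip} and the fact that $h\gamma_i h^{-1}=\rho(\gamma_i)$, the side-pairing transformations of $\Psi(\rho)$ are precisely $\rho(\gamma_1),\dots,\rho(\gamma_M)$, so $\rho_{\Psi(\rho)}=\rho$. For $\Psi\circ\Phi=\mathrm{id}$: by \Cref{def:qi} a marked quasi-ideal polygon $\Fc$ is the \Tdef{} of $\Fc_0$ by some Fenchel--Nielsen map $h_0$, with marking $T_i=h_0\gamma_i h_0^{-1}$ and vertices $h_0(V_k)$; hence $\rho_\Fc$ is conjugation by $h_0$, so the map $h$ built from $\rho_\Fc$ has $h|_\S=h_0|_\S$ by the uniqueness above, hence agrees with $h_0$ on the ideal vertices, and also on the elliptic vertices since both send $\mathrm{Fix}(\gamma_i)$ to $\mathrm{Fix}(T_i)$; therefore $\Psi(\Phi(\Fc))=\Fc$. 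This makes $\Phi$ a homeomorphism $\mathcal Q/\psu\to\mathcal T(\G_0)\cong\mathcal T(\G)$. I would finish by quoting the classical fact that the Teichm\"uller space of a hyperbolic $2$-orbifold of signature $(g;m_1,\dots,m_r;t)$ is homeomorphic to a cell of real dimension $6g-6+2(r+t)$ (see~\cite{Kapovich-book}).

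The main obstacle I anticipate is the continuity of $\Psi$ — that the deformed polygon varies continuously with the representation — together with pinning down a topology on $\mathcal Q$ that makes both maps manifestly continuous. The ``fixed point of an explicit word in the generators'' description of every vertex is exactly what circumvents having to invoke continuity of the Fenchel--Nielsen construction itself; once that is in place, the remaining points (well-definedness of $\Phi$ and $\Psi$ and the two compositions) all reduce to the uniqueness of $h|_\S$ and to \Cref{thm:FG-qip,thm:qip-FG}.
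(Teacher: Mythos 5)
Your proposal is correct and follows essentially the same route as the paper: the bijection is built from the maps of \Cref{thm:FG-qip} and \Cref{thm:qip-FG}, and injectivity/uniqueness of $h|_\S$ rests on the density of fixed points of $\G_0$ in $\S$ (you use hyperbolic attracting fixed points where the paper uses parabolic ones, which is immaterial since $\G_0$ is of the first kind). Your treatment of continuity of $\Psi$ via vertices as fixed points of explicit words is actually more careful than the paper's (which asserts continuity), modulo the small correction that the $t-1$ cusps not congruent to $V_0$ are not of the form $w(V_0)$ but are $\mathrm{Fix}(\rho(\gamma_j))$ for the parabolic generators, which is covered by the same continuity argument; the paper also computes the dimension $6g-6+2(r+t)$ directly by counting polygon parameters rather than citing it.
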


\begin{proof} The space of all marked quasi-ideal polygons with a given signature  is endowed with the following topology: for quasi-ideal polygons $\{P_n\}$ and $P$, $P_n\to P$ if all vertices and side-pairing transformations of $P_n$ converge to the vertices and side-pairing transformations of $P$. We denote this space modulo $\psu$ by $\mathcal{P}(\G)$. The space $\mathcal T(\G)$  is endowed with topology coming from the injective representation $\G\to \psu$. We will now show that the maps of Theorems~\ref{thm:FG-qip} and~\ref{thm:qip-FG} are bijective.

Consider the map $\mathcal{T}(\G)\to \mathcal{P}(\G)$ of \Cref{thm:FG-qip}.
Suppose \Tdef{}s of two Fuchsian groups $\G_1$ and $\G_2$ produce two polygons $\Fc_1$ and $\Fc_2$ which represent the same point in $\mathcal{P}(\G)$, that is, there is a M\"obius transformation $\mu\in\psu$ such that $\mu\circ h_1$ and $h_2$ agree on all vertices of the canonical quasi{}-ideal polygon~$\Fc^*$. This means that $h=h_2^{-1}\circ\mu\circ h_1$ fixes all the vertices of~$\Fc^*$ that include all its cusps, and $h\circ T_i\circ h^{-1}=T_i$ for all side-pairing transformations of $\G^*$. Therefore, $h\circ T\circ h^{-1}=T$ for all $T\in\G^*$, that is, $h$ commutes with all elements of $\G^*$. Each parabolic fixed point of $\G^*$ is $T(V)$ for some $T\in\G^*$ and a cusp $V$ of~$\Fc^*$, and we conclude that $h(T(V))=T(h(V))=T(V)$, i.e., $h=h_2^{-1}\circ\mu\circ h_1$ fixes all parabolic fixed points of $\G^*$. Since parabolic fixed points  are dense in~$\S$ \cite[Theorem 5.3.9]{Beardon-book} and by the continuity of the Fenchel--Nielsen maps, $h_2^{-1}\circ\mu\circ h_1$ is the identity on~$\S$. By \Cref{defn teich}, this implies that $\G_1$ and $\G_2$ represent the same point in $\mathcal{T}(\G)$, which shows injectivity. Surjectivity follows from \Cref{def:qi}.

Since each quasi-ideal polygon (\Cref{def:qi}) satisfies properties (1)--(4) and (5*), the map of \Cref{thm:qip-FG} may be considered as the map $\mathcal{P}(\G)\to \mathcal{T}(\G)$.
Suppose two quasi-ideal polygons $\Fc_1$ and $\Fc_2$ are obtained by \Tdef{} of~$\Fc^*$. Then the groups of side-pairing transformations of $\Fc_1$ and $\Fc_2$ are $\G_1=h_1\circ \G^*\circ h_1^{-1}$ and $\G_2=h_2\circ \G^*\circ h_2^{-1}$, respectively. If they represent the same point in $\mathcal{T}(\G)$, by \Cref{Teich-representation}, there exists a M\"obius transformation~$\mu$ which conjugates them, i.e., $\mu\circ \G_1\circ \mu^{-1}=\G_2$, and therefore
\[
(h_2^{-1}\circ\mu\circ h_1)\circ \G^*\circ (h_2^{-1}\circ\mu\circ h_1)^{-1}=\G^*.
\]
By \Cref{thm:FG-qip} if $V$ is the vertex of~$\Fc^*$, then 
$(h_2^{-1}\circ\mu\circ h_1)(V)=\tilde{V}$ is also a vertex of~$\Fc^*$,  and we have
\[
\mu\circ h_1(V)=h_2(\tilde{V}),
\]
that is, $\mu$ maps vertices of $\Fc_1$ to vertices of $\Fc_2$. This implies that $\Fc_1$ and $\Fc_2$ represent the same point in $\mathcal{P}(\G)$, which shows injectivity. Surjectivity follows from \Cref{thm:FG-qip} and \Cref{def:qi}.

This bijection is clearly continuous.
Hence, $\mathcal{P}(\G)$ is homeomorphic to $\mathcal T(\G)$ and we have an alternative definition of the Teichm\"uller space as the space of marked quasi-ideal polygons with a given signature modulo $\psu$. 

The dimension can be easily calculated using quasi-ideal polygons: $4g+r+t-1$ ideal vertices congruent to $V_0$ are given by $4g+r+t-1$ real parameters, hyperbolic generators are given by $2g$ real parameters representing marking, elliptic vertices are given by $r$ real parameters, and additional cusps by $t-1$ real parameters. Notice that elliptic and parabolic generators are uniquely determined by the positions of their fixed points. Since the product fixing $V_0$ must be parabolic, we need to decrease the number of parameters by $1$. Thus the space of marked quasi-ideal polygons modulo orientation-preserving isometries of~$\D$  (which is $3$-dimensional) is given by
\[
(4g+r+t-1)+(2g)+r +(t-1)-1-3=6g-6+2(r+t)
\]
real parameters which depend continuously on  the  marked quasi-ideal polygon. This can be used to see the homeomorphism $\mathcal{P}(\G) \cong \R^{6g-6+2(r+t)}$ directly. \end{proof}

\section{Boundary maps and their natural extensions}\label{sec:boundary map}

Let $\Gamma$ a Fuchsian group of signature $(g; m_1,m_2,\dots, m_r;t\ge 1)$ and let~$\Fc$ be its associated quasi-ideal $N$-gon obtained by \Tdef{} of a canonical polygon (see \Cref{sec:Tdef}), with $N=4g + 2r + 2(t-1)$.

Let $\mathcal V := \{V_1,V_2,\dots, V_N\}$ be the set of all vertices of~$\Fc$ ordered by argument mod~$2\pi$ (the indices are mod $N$, so $V_N = V_0$).
The polygon~$\Fc$ also has $N$ sides, each connecting two consecutive vertices: side~$k$ is $V_{k-1}V_k$.

We now describe a family of boundary maps associated with $\Gamma$ and~$\Fc$.  Let $A_1,...,A_N \in \S$ be points satisfying
\begin{equation*} \label{eq:partition} A_k \in (V_{k-1},V_{k+1}) \text{ if }V_k \text{ is elliptic and }A_k = V_k\text{ otherwise}.
\end{equation*}
We call the former \emph{elliptic partition points} and the latter \emph{ideal partition points}.

\medskip
For each partition $\A := \{A_1,...,A_N\}$, denoting $A_0 = A_N$ as with the vertices, we define the \emph{boundary map} $f_{\A}:\S \to \S$ by
\begin{equation} \label{eq:boundary map}
f_{\A}(x) = \gamma_k(x) \quad\text{for }x \in [A_{k-1}, A_{k}) \subset \S, \end{equation}
where $\gamma_k:\bar\D \to \bar\D$ is the transformation that maps side $V_{k-1}V_{k}$ to its paired side (see \Cref{def:qi} for the description of how sides are glued).

The \emph{natural extension} of the boundary map $f_{\A}:\S\to\S$ is the map~$F_{\A}$ on the space $\S \times \S \setminus \Delta = \{(u,w)\in\S\times\S : u \ne w\}$ given by
\begin{equation}\label{eq:natural extension}
  F_{\A}(u,w) = \big( \gamma_k(u),\gamma_k(w) \big) \quad\text{for }w \in [A_{k-1}, A_{k}),
\end{equation}
using the same partition~$\A$ as the boundary map, and the global attractor 
\[
    \overline{\bigcap_{n=0}^\infty F_{\A}^n(\S \times \S \setminus \Delta)}
\] 
is a bijectivity domain to which every point is mapped.
Technically, the \emph{natural extension} in the classical sense is the restriction of $F_{\A}$ to this set.
\begin{remark}\label{reduction map} If one identifies a pair $(u,w)\in \S \times \S \setminus \Delta$ with a geodesic in~$\D$ from~$u$ to~$w$, then $F_{\A}$ maps geodesics to geodesics. The attractor of $F_{\A}$ corresponds to the set of \emph{reduced geodesics}, therefore $F_{\A}$ is also called the \emph{reduction map}. Notice that, since $F_\A$ acts by generators of $\G$, all geodesics on $\D$ obtained by $F_\A$ from a given one represent the same geodesic on $\G\backslash \D$.
This approach has applications to symbolic coding of geodesics on $\G\backslash\D$ (see also Section \ref{sec:remarks}).
\end{remark}

\subsection*{Cycle property} \label{sec:cycle}
In the study of one-dimensional piecewise continuous maps, understanding the orbits of the discontinuity points is often crucial. Although formula~\eqref{eq:boundary map} defines $f_{\A}(A_k)=\gamma_{k+1}(A_k)$, one can consider two orbits of $A_k$, the ``upper'' orbit starting with $\gamma_{k+1}(A_k)$, and the ``lower'' orbit starting with $\gamma_{k}(A_k)$.
We first consider the ``upper'' and ``lower'' orbits of ideal partition points.
\begin{prop} \label{orbit-ideal} The upper- and lower-orbits of any ideal vertex/partition point of~$\Fc$ is finite. 
\end{prop}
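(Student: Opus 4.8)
The plan is to track how the boundary map $f_{\A}$ acts on the ideal vertices of~$\Fc$ and show that the forward orbit of any such point is eventually periodic, with the cycle being traversed entirely by parabolic elements. The key structural fact is that the ideal vertices of~$\Fc$ fall into parabolic cycles under the side-pairing relation: the $4g+r+t-1$ vertices congruent to~$V_0$ form one cycle whose cycle transformation is the parabolic product~\eqref{product} (in the deformed setting, its conjugate under~$h$), and each of the remaining $t-1$ cusps forms a singleton cycle fixed by a parabolic independent generator. Since ideal partition points coincide with ideal vertices, it suffices to understand the ``upper'' and ``lower'' orbits of each vertex.

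First I would set up notation: for an ideal vertex $V_k$, the ``upper'' orbit starts with $\gamma_{k+1}(V_k)$ and the ``lower'' orbit starts with $\gamma_k(V_k)$, where $\gamma_k$ pairs side $V_{k-1}V_k$ with its partner. The crucial observation is that $\gamma_k$ maps $V_k$ to another vertex of~$\Fc$ (the image of the side $V_{k-1}V_k$ is another side, and vertices go to vertices), so the orbit of an ideal vertex under successive applications of the appropriate side-pairing maps stays within the finite set~$\mathcal V$ of vertices. However, one must be careful: after applying $\gamma_k$ we land at a vertex $V_j = \gamma_k(V_k)$, but the next map applied is determined by which partition interval $[A_{j-1},A_j)$ the point lies in, and at a vertex this is ambiguous — hence the need to consider the two one-sided orbits separately. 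The point is that following the ``upper'' (resp. ``lower'') orbit corresponds to consistently following one side of the vertex, which corresponds to multiplying by side-pairing transformations in a way governed by the cycle relation at that vertex.

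The main step is then to show these one-sided orbits close up. I would argue that following, say, the upper orbit of~$V_0$ amounts to successively applying the generators appearing in the cycle relation~\eqref{product} at~$V_0$, one at a time, so after finitely many steps (at most the length of the cycle, $4g+r+t-1$) one returns to~$V_0$; the composition of all these maps is precisely the parabolic cycle transformation, which fixes~$V_0$, confirming closure. For a vertex $V_k$ not congruent to~$V_0$ but an additional cusp, the cycle has length one and the parabolic generator fixing it immediately closes the orbit. For a vertex $V_k$ congruent to~$V_0$, a few applications of side-pairing maps carry it to~$V_0$ itself, reducing to the previous case. Since~$\mathcal V$ is finite and each one-sided orbit is a deterministic trajectory that provably returns to its starting vertex, the orbit is finite (indeed periodic).

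The main obstacle I anticipate is the bookkeeping of \emph{which} side-pairing transformation is applied at each step of a one-sided orbit — i.e., making precise the claim that the ``upper orbit'' of a vertex corresponds exactly to reading off the factors of the cycle relation in order. This requires carefully relating the counter-clockwise ordering of sides around~$\Fc$, the labeling of partition intervals, and the standard description of a vertex cycle in Poincaré's Polygon Theorem; once that dictionary is in place, finiteness is essentially the statement that a vertex cycle has finite length. A secondary subtlety is confirming that the deformation (Teichmüller deformation via the Fenchel--Nielsen map~$h$) preserves this combinatorial picture, but this follows from \Cref{thm:FG-qip} and \Cref{def:qi}, since~$h$ conjugates the canonical data to the given data and preserves the cycle structure and the parabolicity of the cusp relation.
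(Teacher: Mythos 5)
Your proposal is correct and takes essentially the same approach as the paper: ideal vertices are cusps, the $t-1$ cusps not congruent to~$V_0$ are fixed points of their parabolic independent generators, and the one-sided orbits of any cusp congruent to~$V_0$ are contained in the finite set of $4g+r+t-1$ such cusps. The paper states the last step more tersely (simply asserting that both one-sided orbits of~$V_0$ consist of these $4g+r+t-1$ cusps), while you supply the underlying mechanism by reading off the vertex-cycle relation~\eqref{product} one factor at a time, which is exactly the bookkeeping the paper leaves implicit.
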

\begin{proof} Ideal vertices are cusps of~$\Fc$. By construction of~$\Fc$, each parabolic cycle non-congruent to $V_0$ consists of a single cusp which is fixed by the corresponding parabolic element of~$\G$. Therefore its orbit consists of a single point, i.e., is finite. The rest of the cusps are congruent to $V_0$, so it is sufficient to consider only $V_0$ itself. The upper and lower orbits of $V_0$ consist of the $N - r - (t-1) = 4 g + r + t - 1$ cusps congruent to it.
\end{proof}
The following patterns play an essential role in studying the boundary map $f_{\A}$ (see \cite{KU-ab-structure,KU-structure}).\footnote{For a general piecewise continuous map~$f$, one can define functions $f^+$ and $f^-$ taking different conventions for behavior at discontinuity points anywhere in the orbits. For our system it is impossible to hit another discontinuity point before the end of the cycle, so such details are unnecessary.}
\begin{defn} \label{defn:cycle property} We say that $A_k$ has the \emph{cycle property} (also called ``matching condition'' in some works) if there exist non-negative integers $i_k$, $j_k$ such that 
\begin{equation}\label{eq:cycle}
    f^{i_k}_{\A}(\gamma_{k+1}A_k) 
    = f^{j_k}_{\A}(\gamma_{k}A_k).
\end{equation}
\end{defn}
Let $V_k$ be an elliptic vertex of order $m$. Then $V_{k-1}=A_{k-1}$ and $V_{k+1}=A_{k+1}$ are ideal partition points and 
\begin{equation}\label{eq:deffA}
\begin{array}{ll}
f_{\A}(x)= \gamma_{k}(x) &\text{ if }x\in [A_{k-1},A_k)\\  
f_{\A}(x)=\gamma_{k+1}(x) &\text{ if }x\in [A_k,A_{k+1}), 
       \end{array}
\end{equation}
where $\gamma_{k}=c_m$ is a {\bf clockwise} hyperbolic rotation around the elliptic vertex $V_k$ by the angle $\frac{2\pi}{m}$ for some $m\ge 2$,
and $\gamma_{k+1}=c_m^{-1}$. 
\begin{thm}\label{thm:cycleprop}
Let $V_k$ be an elliptic vertex of order $m$. An elliptic partition point $A_k \in (V_{k-1},V_{k+1})$ has the cycle property if $c_m^j(A_k)\ne V_{k-1}$ for all integers $j$.
\end{thm}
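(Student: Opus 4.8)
The plan is to analyze the dynamics near the elliptic vertex $V_k$ of order $m$ directly, using the explicit piecewise form of $f_{\A}$ from~\eqref{eq:deffA}. The key observation is that near $V_k$ the map acts by the clockwise hyperbolic rotation $c_m$ on $[A_{k-1},A_k)$ and by its inverse $c_m^{-1}$ on $[A_k,A_{k+1})$, and both $V_{k-1}=A_{k-1}$ and $V_{k+1}=A_{k+1}$ are ideal partition points whose orbits are finite by \Cref{orbit-ideal}. The upper orbit of $A_k$ begins at $\gamma_{k+1}(A_k)=c_m^{-1}(A_k)$ and the lower orbit begins at $\gamma_k(A_k)=c_m(A_k)$. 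Since $A_k\in(V_{k-1},V_{k+1})$, the points $c_m(A_k)$ and $c_m^{-1}(A_k)$ are the two images of $A_k$ under the rotation group $\langle c_m\rangle$ that are ``adjacent'' to $A_k$ in the $m$-point orbit $\{c_m^i(A_k):0\le i\le m-1\}$ on $\S$.

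First I would track where these two starting points land relative to the partition interval $[A_{k-1},A_{k+1})$ around $V_k$. The hypothesis that $c_m^i(A_k)\ne V_{k-1}$ for all $i$ means that the rotation-orbit of $A_k$ never hits the ideal vertex $V_{k-1}$ (equivalently, by applying a power of $c_m$, it never hits $V_{k+1}=c_m^{\pm1}(V_{k-1})$ either — I should check which power of $c_m$ sends $V_{k-1}$ to $V_{k+1}$ and record that these are the only two vertices in play). Consequently the $m$ points $c_m^i(A_k)$ interlace cleanly with the arc $(V_{k-1},V_{k+1})$, and there is a well-defined smallest $a\ge 1$ with $c_m^{-a}(A_k)$ landing outside $[A_k,A_{k+1})$ on the lower side, and a smallest $b\ge 1$ with $c_m^{b}(A_k)$ landing outside $[A_{k-1},A_k)$ on the upper side. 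While the forward iterates of $\gamma_{k+1}A_k=c_m^{-1}(A_k)$ stay in $[A_k,A_{k+1})$, the map $f_{\A}$ acts as $c_m^{-1}$, so after finitely many (at most $m-1$) steps the upper orbit reaches a point $c_m^{-a}(A_k)$; similarly the lower orbit, while its iterates stay in $[A_{k-1},A_k)$, is multiplied by $c_m$ each step and reaches $c_m^{b}(A_k)$ after finitely many steps.

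The core of the argument is then to show these two ``exit points'' coincide, or that one more application of $f_{\A}$ to each brings them together. Because $c_m^m=\id$, the two sequences $\{c_m^{-1}(A_k),c_m^{-2}(A_k),\dots\}$ and $\{c_m(A_k),c_m^2(A_k),\dots\}$ are just the single cyclic orbit $\{c_m^i(A_k):1\le i\le m-1\}$ traversed in opposite directions, so the upper orbit (moving by $-1$ in the exponent) and the lower orbit (moving by $+1$) must meet: there exist $i_k,j_k\ge 0$ with $-1-i_k\equiv 1+j_k\pmod m$, i.e. $i_k+j_k=m-2$, and for those indices $f^{i_k}_{\A}(c_m^{-1}A_k)=c_m^{-1-i_k}(A_k)=c_m^{1+j_k}(A_k)=f^{j_k}_{\A}(c_m A_k)$, which is exactly~\eqref{eq:cycle}. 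The one subtlety I must rule out is that an intermediate iterate escapes the sector $[A_{k-1},A_{k+1})$ before the exponents match — this is where the hypothesis $c_m^i(A_k)\ne V_{k-1}$ is used: it guarantees that the points $c_m^i(A_k)$ do not straddle a partition boundary in a way that would send the orbit out of the two intervals governed by $c_m^{\pm1}$ prematurely, so that $f_{\A}$ genuinely acts as the rotation on the whole relevant stretch.

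The main obstacle I anticipate is bookkeeping the geometry of which power of $c_m^{-1}$ lies in which partition interval $[A_{k-1},A_k)$ versus $[A_k,A_{k+1})$, and making precise the claim that the orbit stays inside these two intervals until the exponents match up — in other words, verifying that the first time an iterate of the upper (resp.\ lower) orbit leaves $[A_k,A_{k+1})$ (resp.\ $[A_{k-1},A_k)$) is exactly the matching time, rather than earlier. I expect this to follow from the fact that $A_k$ separates the two intervals and the rotation $c_m$ moves points monotonically along the arc $(V_{k-1},V_{k+1})$; a short picture-level argument tracking the $m$ equally-spaced (in the hyperbolic-rotation sense) points $c_m^i(A_k)$ against the single partition point $A_k$ inside the sector should close it, with the non-degeneracy hypothesis exactly ensuring none of these points coincides with the sector endpoints $V_{k-1},V_{k+1}$.
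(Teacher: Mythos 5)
Your proposal follows essentially the same line as the paper's proof. The paper writes $c_m^m(A_k)=A_k$ in the form $c_m^i(A_k)=(c_m^{-1})^{m-i}(A_k)$, picks out the unique exponent $j_k$ such that $c_m^{j_k+1}(A_k)$ is the first (and only) orbit point outside the open arc $(V_{k-1},V_{k+1})$, and sets $i_k=m-j_k-2$; this is precisely your ``two one-sided orbits traversing the same cyclic orbit in opposite directions must meet, with $i_k+j_k=m-2$.''

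Two small points where the paper is sharper than your sketch, and which you should make explicit if you write this up. First, the observation that ``there exist $i_k,j_k\ge 0$ with $i_k+j_k=m-2$'' is vacuous by itself; what actually has to be shown is that the \emph{specific} exit times (your $a-1$ and $b-1$) satisfy this. This reduces to the fact that the $m$-point orbit $\{c_m^i(A_k)\}$ has \emph{exactly one} point outside $(V_{k-1},V_{k+1})$, so that the $j_k$ points swept by the lower branch, the $i_k$ points swept by the upper branch, plus $A_k$ itself and the single exit point, account for all $m$. The paper gets this count from the geometric fact that the clockwise angle at $V_k$ between the rays $V_kV_{k-1}$ and $V_kV_{k+1}$ is $2\pi/m$, so the $c_m$-orbit of the ray $(V_{k-1},V_k)$ tiles a full $2\pi$ around $V_k$ and the associated boundary arcs partition $\S$ with exactly one arc equal to $(V_{k-1},V_{k+1})$. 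Your ``picture-level argument about $m$ equally-spaced points'' is gesturing at exactly this, but it needs to be said precisely. Second, your framing of the hypothesis $c_m^i(A_k)\ne V_{k-1}$ slightly conflates two things: it is not needed to prevent some intermediate iterate from escaping prematurely (the angle argument already gives the exact count regardless), but rather, as your last sentence correctly identifies, to rule out the degenerate situation $c_m^{j_k+1}(A_k)=V_{k-1}$ (equivalently $c_m^{j_k+2}(A_k)=V_{k+1}$), in which the ``end of cycle'' would land on a partition endpoint and the uniqueness of $j_k$ breaks down; that boundary case is handled separately in \Cref{prop:finiteorbit}. With these two points pinned down, your argument is the paper's argument.
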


\begin{proof}
Since $c_m^m=\id$, the $c_m$-orbit of each point in the closure of the unit disk $\bar\D$ is finite, so the orbit of $A_k$ under $c_m$ is a finite set:
\[
    A_k, c_m(A_k),\dots,  c_m^{m-1}(A_k).
\]
The equation $c_m^m(A_k)=A_k$ can be rewritten as
\begin{equation}\label{eq:break}
    c_m^j(A_k)=(c_m^{-1})^{m-j}(A_k)
\end{equation}
for any $0\le j\le m$. We will show that there is a unique way to decompose this equation which is compatible with the definition of $f_{\A}$~\eqref{eq:deffA}.
Since the rotation is monotone, there exists a non-negative integer $j_k$ such that 
\[
    f_{\A}^{j_k}(\gamma_{k}A_k)=c_m^{j_k}(\gamma_{k}A_k)=c_m^{j_k+1}(A_{k})
\]
is not in the open interval $(V_{k-1}, V_{k+1})$, and since the angle between geodesics $V_kV_{k-1}$ and $V_kV_{k+1}$ in the clockwise direction is equal to $\frac{2\pi}{m}$, such $j_k$ is unique mod $m$. This means that for all $j_k+1<j\le m$, $c_m^j(A_k)\in [A_k, V_{k+1})$, where $f_{\A}=c_m^{-1}$, and
the equation~\eqref{eq:break} with $j=j_k+1$ gives us exactly the equation~\eqref{eq:cycle} with $i_k=m-j_k-2$.
In this case $f^{j_k}_{\A}(\gamma_{k}A_k)\in (V_{k-1},V_{k+1})$ (clockwise direction), and we have the cycle with the end of the cycle $c_m^{j_k+1}(A_k)$, which is in the complement to the open interval $(V_{k-1}, V_{k+1})$. 
\end{proof}
The situation when $c^{j}_m(A_k)=V_{k-1}$ can be also easily described. The cycle property does not hold, but the orbits are still finite. This only occurs in two situations: when $m$ is odd and $A_k = M_k$, or $m$ is even and $A_k = P_k$ or $Q_k$.

\begin{prop}\label{prop:finiteorbit}
If $c^{j}_m(A_k)=V_{k-1}$ for some $j$, 
then the upper- and lower-orbits of~$A_k$ under $f_{\A}$ are finite.
\end{prop}
\begin{proof} We have
\[
 c_m^{j-1}(\gamma_{k}A_k)=f^{{j-1}}_{\A}(\gamma_{k}A_k)=V_{k-1} \quad\text{and}\quad c_m^{-i}(\gamma_{k+1}A_k)=f^{{i}}_{\A}(\gamma_{k+1}A_k)=V_{k+1} 
\]
with $i=m-j-2$. Since  $V_{k-1}$ and $V_{k+1}$ are ideal vertices/partition points, their orbits under $f_{\A}$ are finite by \Cref{orbit-ideal}, hence $A_k$ has finite upper- and lower-orbits.
\end{proof}

\section{Proof of Main Theorem for canonical case}\label{sec:canonical proof}

We will now build an explicit description of the attractor of the natural extension associated to the \emph{canonical} quasi-ideal polygon with notation $\A$ instead of $\A^*$ in this section. Using additional details in the appendix, we describe a set~$\Omega_{\A}$ (see~\eqref{omega}). We first show that $F_{\A}$ is bijective on this set and then show that it is the global attractor for $F_{\A}:\S \times \S \setminus \Delta \to \S \times \S \setminus \Delta$.

For any elliptic vertex~$V_k \in \D$, we additionally label points~$P_k, Q_k \in \S$ such that~$V_k$ is the intersection of the geodesics~$V_{k-1}Q_k$ and~$P_kV_{k+1}$. See \Cref{fig:VPAQV} for examples of these labels. For ideal~$V_k$, we have $P_k = Q_k = V_k$.

\begin{figure}[hbt]
	\begin{tikzpicture}[scale=8,thick]
		\draw (-45:1) arc (-45:-135:1);
		\draw (-120:1.045) node {$V_{k-1}$};
		\draw (-100:1.04) node {$P_k$};
		\draw [red, very thick] (-96:0.98) -- (-96:1.02) (-96:1.055) node {$A_k$};
		\draw (-80:1.048) node {$Q_k$};
		\draw (-60:1.06) node {$V_{k+1}$};
		\draw (-0.01,-0.69) node {$V_k$};
		\draw (-120:1) arc (150:10:0.36397023426620234);
		\draw (-100:1) arc (170:30:0.36397023426620234);
	\end{tikzpicture}

    \caption{Labels of points on the boundary}
    \label{fig:VPAQV}
\end{figure}
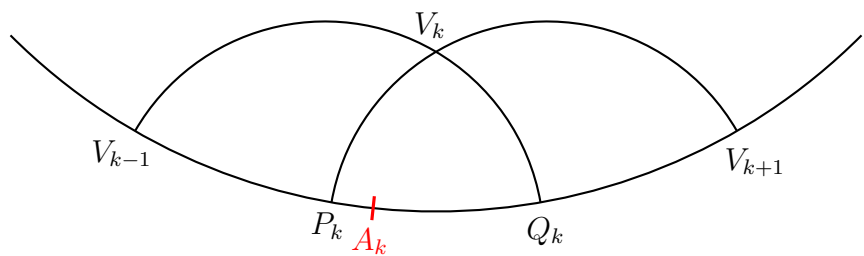

Recall that $V_0=1$. We will make frequent use of the point 
\begin{equation}\label{v} v := e^{(\pi/\ell)i}. \end{equation}
If $V_1$ is parabolic, then $v = V_1$. If $V_1$ is hyperbolic, then $v = V_2$ (see \Cref{fig:canonical polygon}).
If $V_1$ is elliptic, then $v$ is the projection $\frac{V_{1}}{\abs{V_{1}}}$ and we also denote
\begin{equation}\label{pq}
p := P_{1} \qquad \text{and} \qquad
q := Q_{1}.
\end{equation}
Notice that $v$ is the midpoint between~$p$ and~$q$ and $v^2 = V_2$ (see \Cref{fig:flower}).

\begin{figure}[hbt]
  \includegraphics[width=0.95\textwidth]{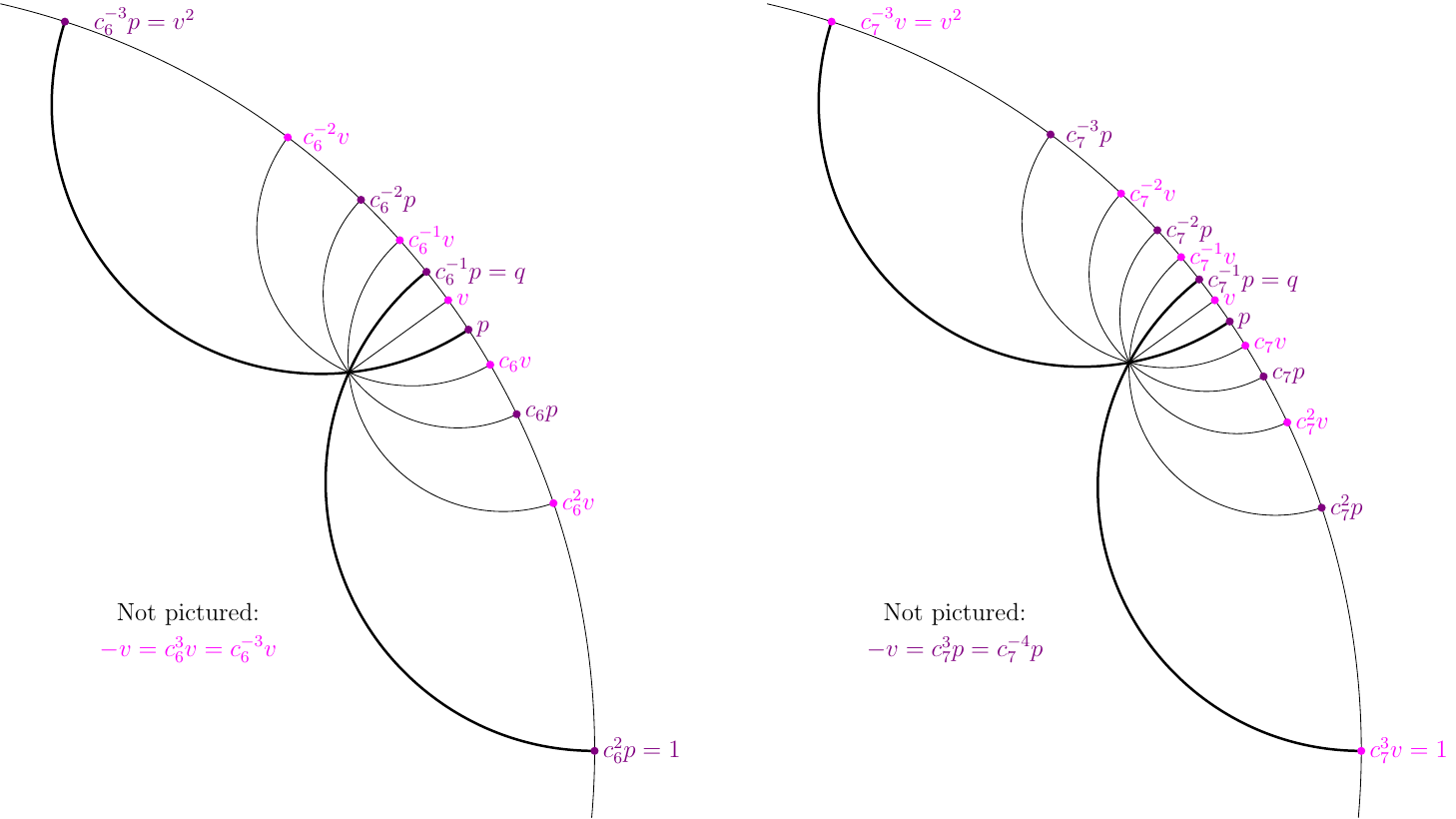}
    \caption{Orbit of $v$ and $p$ under $c_m$ for $m=6$ (left), $m=7$~(right)}
    \label{fig:flower}
\end{figure}

\subsection{Special Markov partitions} \label{sec:markov}

There are three partitions of particular importance:
\begin{itemize}
    \item Left: each $A_k = P_k$.
    \item Right: each $A_k = Q_k$.
    \item Midpoint: each $A_k = M_k$, the midpoint of the circular arc $[P_k,Q_k]$.
\end{itemize}
We denote these partitions as $\mathcal P$, $\mathcal Q$, and $\mathcal M$, respectively. (Recall that if $V_k$ is ideal then $P_k = M_k = Q_k = V_k$.)

\begin{prop}\label{prop:markov}
The maps $f_{\mathcal P}$, $f_{\mathcal Q}$, and $f_{\mathcal M}$ have finite Markov property.
\end{prop}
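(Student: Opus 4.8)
The plan is to show that for each of the three distinguished partitions, the orbits of the partition points are finite, which will then be upgraded to the Markov property by standard arguments. First I would observe that the partition points fall into two classes: the ideal ones (where $A_k = V_k$), whose upper- and lower-orbits are finite by \Cref{orbit-ideal}, and the elliptic ones. For an elliptic vertex $V_k$ of order $m$, the partition point in the $\mathcal P$, $\mathcal Q$, or $\mathcal M$ partition is, respectively, $P_k$, $Q_k$, or $M_k$. The key geometric fact, visible in \Cref{fig:flower}, is that these three points are precisely the ``special'' points of the $c_m$-orbit: $P_k$ and $Q_k$ are the endpoints on $\S$ of the two geodesic sides meeting at $V_k$, and $M_k$ is the midpoint of the arc $[P_k,Q_k]$. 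I would then compute where these points land under $c_m$: since $c_m$ is the clockwise rotation by $\frac{2\pi}{m}$ about $V_k$, the $c_m$-orbit of $Q_k$ is $\{Q_k, P_k, c_m^2(P_k), \dots\}$ — in particular $c_m(Q_k) = P_k = V_{k-1}'$-type ideal point adjacent in the construction, actually $c_m(Q_k)$ is the endpoint of the next side — so the relevant iterate of the partition point is an ideal vertex after finitely many (indeed one or two) steps.

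The heart of the argument is that for the left, right, and midpoint partitions, the hypothesis of \Cref{thm:cycleprop} is \emph{not} what we need; instead we are in the degenerate situation of \Cref{prop:finiteorbit}. Concretely: for the $\mathcal Q$ partition, $A_k = Q_k$, and by construction $c_m^{j_k+1}(Q_k) = P_k$, which is an ideal vertex/partition point $V_{k-1}$ (it is the endpoint $P_k = V_{k-1}$ since $V_k$ is the intersection of $V_{k-1}Q_k$ with $P_kV_{k+1}$); symmetrically, applying $c_m^{-1}$, we land on $V_{k+1}$. So the upper- and lower-orbits of $Q_k$ are finite by \Cref{prop:finiteorbit} (or by the argument in its proof). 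The same reasoning applies verbatim to $P_k$ by the mirror symmetry $p \leftrightarrow q$. For the midpoint partition, $M_k$ is the midpoint of $[P_k,Q_k]$; here one checks that $c_m^{j_k+1}(M_k)$ and $c_m^{j_k+2}(M_k)$ are related by the same reflection symmetry about the geodesic through $V_k$ and $v^{?}$, forcing the cycle to close — for even $m$ one of the iterates is exactly $V_{k-1}$ or $V_{k+1}$, and for odd $m$ the ``upper'' and ``lower'' orbits coincide after the appropriate number of steps (again visible in \Cref{fig:flower}). In all cases the orbits of all partition points are finite.

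From finiteness of all partition-point orbits, the Markov property follows by the now-standard mechanism: the partition $\{[A_{k-1},A_k)\}$ together with the finitely many points obtained as forward images of the $A_k$ generates a finite refinement $\mathcal R$ of $\S$ such that $f_{\mathcal A}$ maps each atom of $\mathcal R$ onto a union of atoms of $\mathcal R$; this is exactly the Markov condition. The only point requiring care is that the relevant iterates stay ideal and do not re-enter an elliptic interval prematurely — but this is guaranteed by the footnote-level observation in the paper that ``it is impossible to hit another discontinuity point before the end of the cycle,'' together with the explicit description above of how each $c_m$-orbit reaches an ideal vertex. The main obstacle I anticipate is the bookkeeping for the midpoint partition in the odd-$m$ case, where the finiteness comes not from literally hitting an ideal vertex but from the coincidence of the two one-sided orbits; handling this cleanly requires making the reflection symmetry of the $c_m$-orbit of $M_k$ precise (the orbit of $M_k$ is symmetric under the hyperbolic reflection fixing $V_k$ that swaps $P_k$ and $Q_k$), which I would state as a short lemma before invoking it.
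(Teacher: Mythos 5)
Your overall strategy --- reduce the finite Markov property to finiteness of the upper- and lower-orbits of the partition points, dispose of the ideal points via \Cref{orbit-ideal}, and analyze the $c_m$-orbits of $P_k$, $Q_k$, $M_k$ --- is the same as the paper's. But the execution of the elliptic case contains a concrete error that breaks the argument. You assert that $c_m^{j_k+1}(Q_k)=P_k$ and that $P_k$ ``is an ideal vertex/partition point $V_{k-1}$.'' Neither claim is right: by definition $V_k$ is the intersection of the geodesics $V_{k-1}Q_k$ and $P_kV_{k+1}$, so $P_k$ and $Q_k$ are the \emph{second} endpoints of the extended sides; the five points $V_{k-1},P_k,M_k,Q_k,V_{k+1}$ are distinct and in counter-clockwise order. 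In particular $P_k\ne V_{k-1}$, the point $c_m(Q_k)=P_k$ still lies inside $(V_{k-1},V_{k+1})$, and \Cref{prop:finiteorbit} does not apply after one step. The correct statement is that the rays $V_kQ_k$ and $V_kV_{k-1}$ are opposite, so an iterate of $Q_k$ lands exactly on $V_{k-1}$ precisely when $\pi$ is an integer multiple of $2\pi/m$, i.e.\ when $m$ is even (then $c_m^{m/2}(Q_k)=V_{k-1}$). For odd $m$ your route through \Cref{prop:finiteorbit} is unavailable for $P_k$ and $Q_k$, and this is not a corner case that can be waved away.

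The missing idea is the one the paper actually uses: identify the \emph{end of the cycle} and show it is a controlled point. For $A_k=M_k$ the end of the cycle is the second endpoint of the geodesic through $V_k$ and $M_k$, and because the polygon is canonical ($V_k$ and $M_k$ lie on the same Euclidean radius) that geodesic is a diameter, so the end of the cycle is the antipode $-M_k$. The relevant dichotomy is then on the parity of $\ell$, not of $m$: for $\ell$ odd, $-M_k$ is an ideal vertex; for $\ell$ even, it is the midpoint of the opposite building block, hence again an $\mathcal M$-partition point or an ideal vertex; either way the continued orbit is finite by \Cref{orbit-ideal}. For $A_k\in\{P_k,Q_k\}$ with $m$ odd, the end of the cycle is again $-M_k$ (since $\pi+\pi/m$ is an integer multiple of $2\pi/m$ exactly when $m$ is odd), reducing to the midpoint case. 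Your fallback for the cases you flag as delicate --- ``the upper and lower orbits coincide after the appropriate number of steps'' --- is just the cycle property of \Cref{thm:cycleprop}, which holds for \emph{every} admissible $A_k$ and does not give finiteness: after the two one-sided orbits merge, the common orbit continues under $f_{\A}$, and one still has to know that the merge point has a finite forward orbit. That is exactly what the identification of the end of the cycle with $-M_k$, together with the rotational symmetry of the canonical polygon, supplies, and it is the step your proposal lacks.
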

\begin{proof}
The finite Markov property on each of the three partitions $\mathcal P, \mathcal Q, \mathcal M$ (technically sofic, with added images of the elliptic partition points) follows from the finiteness of upper- and lower-orbits of the corresponding partition points (\Cref{prop:finiteorbit}). 

Note that $M_k$ is the point in $[P_k,Q_k]$ such that $\angle P_kV_kM_k=\angle M_kV_kQ_k=\frac{\pi}{m}$, that is, where the angle bisector of $P_kV_kQ_k$ meets the boundary~$\S$.

If $A_k=M_k$ is an elliptic partition point in $\mathcal B(n)$, then the end of the cycle is the second endpoint of the geodesic through $V_k$ and $M_k$, which (since~$\Fc$ is canonical) is~$-M_k$. 
\begin{itemize}
    \item If $\ell$ is even, $-M_k$ is the middle point of $\mathcal B(n+\frac{\ell}2)$, which is either an elliptic partition point in this building block or an ideal vertex. 
    In the latter case, \Cref{orbit-ideal} applied to $-M_k$ implies that the upper- and lower-orbits of $M_k$ are finite. In the former case, the orbit of $M_k$ is still finite because the union of all midpoints and ideal points is finite.
    \item If $\ell$ is odd, $-M_k=e^{2\pi i(\floor{\ell/2}+1)/\ell}$ is an ideal vertex. Again, by \Cref{orbit-ideal} applied to $-M_k$ the upper- and lower-orbits of $M_k$ are finite.
\end{itemize}
If $A_k$ is either $P_k$ or $Q_k$, and $m$ is even, then by \Cref{prop:finiteorbit} the upper and lower orbits are finite. If $m$ is odd, then the end of the cycle will be $-M_k$, and the orbits are finite by the argument in the previous paragraph.
\end{proof}

\FloatBarrier
\subsection{Domain of bijectivity}
\newcommand\Strip[1]{ \Omega_{\A}^{(#1)} }

In this section we define a bijectivity domain $\Omega_{\A} \subset \S \times \S$ for the map $F_{\A}$ from~\eqref{eq:natural extension} using some additional notation.

Given a signature $(g; m_1,m_2,\dots, m_r;t\ge 1)$, we consider the string
\begin{equation} \label{string}
    s \;\;=\;\;
    [\STRING]
    \;\;=\;\;
    \{\underbrace{\square,\dots,\square}_{\text{\scriptsize$g$}}, m_1,\dots, m_r, \underbrace{\infty,\dots,\infty}_{\text{\scriptsize$t-1$}}\}
\end{equation}
in the alphabet $\{2,3,4,\dots\}\cup\{\infty,\square\}$, with $m_i \le m_{i+1}$, where $\ell=g+r+t-1$. We also denote the rotation $R: \bar\D \to \bar\D$ around the origin by 
\begin{equation} \label{R}
    R(z) = e^{(2\pi/\ell)i} z.
\end{equation}
Note that $R \times R$ is a diagonal translation in $\S \times \S$.

The set
\begin{equation} \label{omega} \Omega_{\A} := \bigcup_{k=1}^{\ell} (R^{k-1} \times R^{k-1}) \Strip{s_k} \end{equation}
is composed of several horizontal strips, where each strip $\Strip{s_k} \subset \S \times [1,e^{(2\pi/\ell)i}]$ is defined below in~\eqref{eq:horizontal strips par-hyp} or~\eqref{eq:horizontal strips}. See~\Cref{fig:attractor example} for an example of $\Omega_{\A}$ with the horizontal strips $(R^{k-1} \times R^{k-1}) \Strip{s_k}$ in different colors on the left; their images on the right of \Cref{fig:attractor example} are vertical strips.

\begin{figure}[htb]
\includegraphics[width=0.9\textwidth]{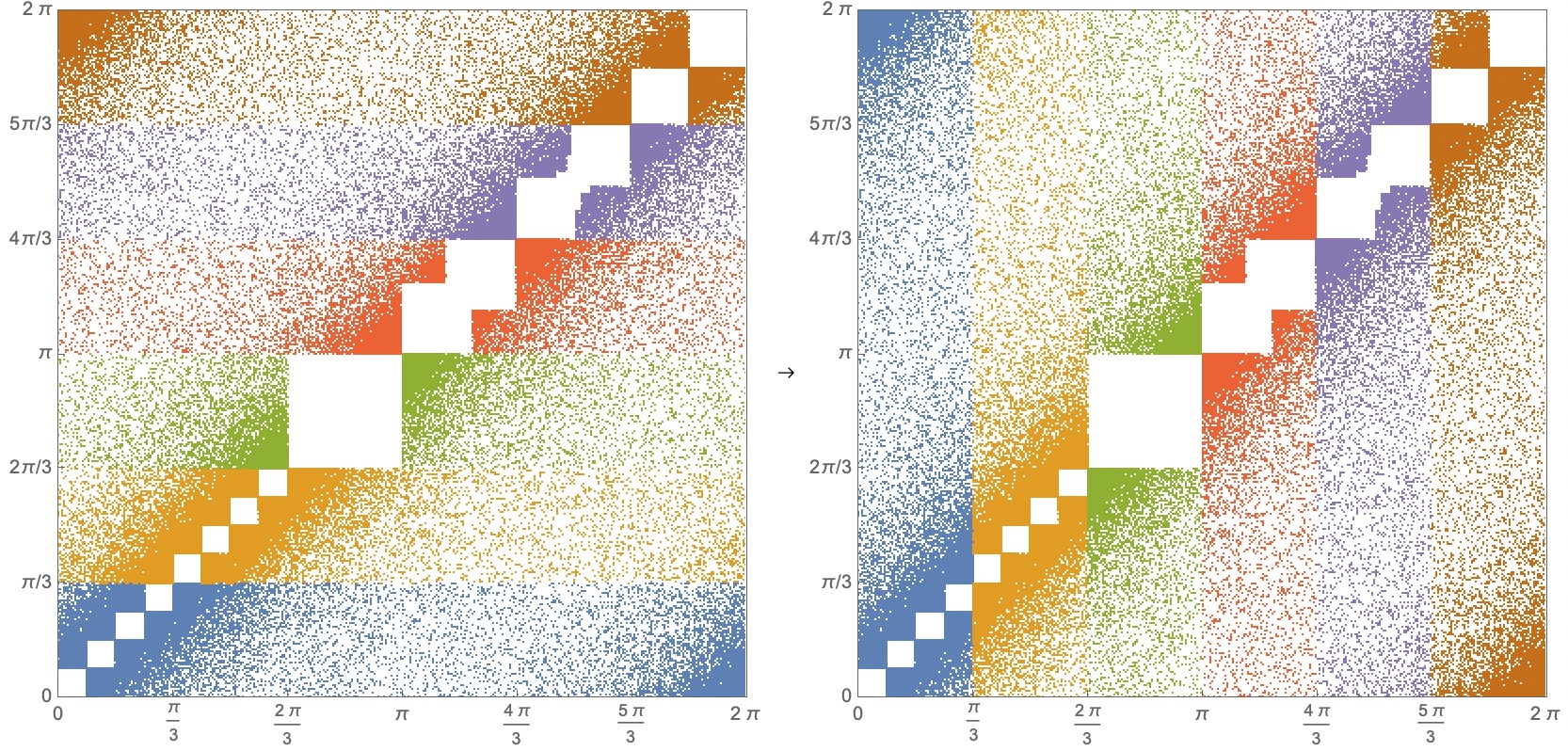}
\caption{Bijectivity domain (attractor)~$\Omega_{\A}$ for $s = \square\square258\infty$, and its image}
\label{fig:attractor example}
\end{figure}

For $s_k \in \{\square,2,\infty\}$, the horizontal strip $\Strip{s_k}$ does not depend on the partition $\A$ and, recalling $v := e^{(\pi/\ell)i}$, is given by
\begin{equation}\label{eq:horizontal strips par-hyp}
\begin{aligned}
    \Strip \square &= ([v^{1/2},1] \times [1,v^{1/2}]) \cup ([v,v^{1/2}] \times [v^{1/2},v]) \\ &\qquad \cup ([v^{3/2},v] \times [v,v^{3/2}]) \cup ([v^2,v^{3/2}] \times [v^{3/2},v^2]) \\
    \Strip 2 &= [v^2, 1] \times [1, v^2] \\
    \Strip \infty &= \big( [v, 1] \times [1, v] \big) \cup \big( [v^2, v] \times [v, v^2] \big),
\end{aligned}
\end{equation}
where the interval notations represent clockwise intervals in the circle~$\S$.

For $s_k = m \ge 3$, the strip $\Strip{s_k}$ is more difficult to describe because it does depend on $\A$, in particular, on the elliptic partition point in the building block~$\mathcal B(k)$. It is sufficient to describe this for $k=1$. Let $a := A_{1}\in (1, v^2)$. 
From now on, we denote
\[ I := {i}_{1} \qquad\text{and}\qquad J := {j}_{1} \]
from \Cref{thm:cycleprop} if $a$ has the cycle property, or $I=i-1$ and $J=j$ in the special cases addressed in \Cref{prop:finiteorbit}. Note that 
\[ \frac{2\pi}{\ell}>\arg(a)>\arg(c_m(a))>\cdots >\arg(c_m^{J}(a)) \ge 0. \]
The strip $\Strip{m}$ consists of several rectangles, as described here:
\begin{equation}\label{eq:horizontal strips}
\begin{aligned}
    \mathcal U_{m}(i) &= [v^2, c_m^{-i+1}(1)] \times [c_m^{-i+1}(a), c_m^{-i}(a)], \quad 1 \le i \le I \\
    \mathcal U_{m}(I+1) &= [v^2, c_m^{-I}(1)] \times [c_m^{-I}(a), v^2] \\
    \mathcal L_{m}(j) &= [c_m^{j-1}(v^2), 1] \times [c_m^j(a), c_m^{j-1}(a)], \quad 1 \le j \le J \\
    \mathcal L_{m}(J+1) &= [c_m^{J}(v^2), 1] \times [1, c_m^{J}(a)] \\
    \Strip{m} &= \bigcup_{i=1}^{I+1}\mathcal U_{m}(i) \cup \bigcup_{j=1}^{J+1} \mathcal L_{m}(j).
\end{aligned}
\end{equation}
\Cref{fig:strip 10} shows $\Strip{10}$ as a union of $\mathcal L_{10}(j)$ and $\mathcal U_{10}(i)$ rectangles, and \Cref{fig:strip 11} shows the same for $m=11$.

\begin{figure}[hb]    \includegraphics[width=0.95\textwidth]{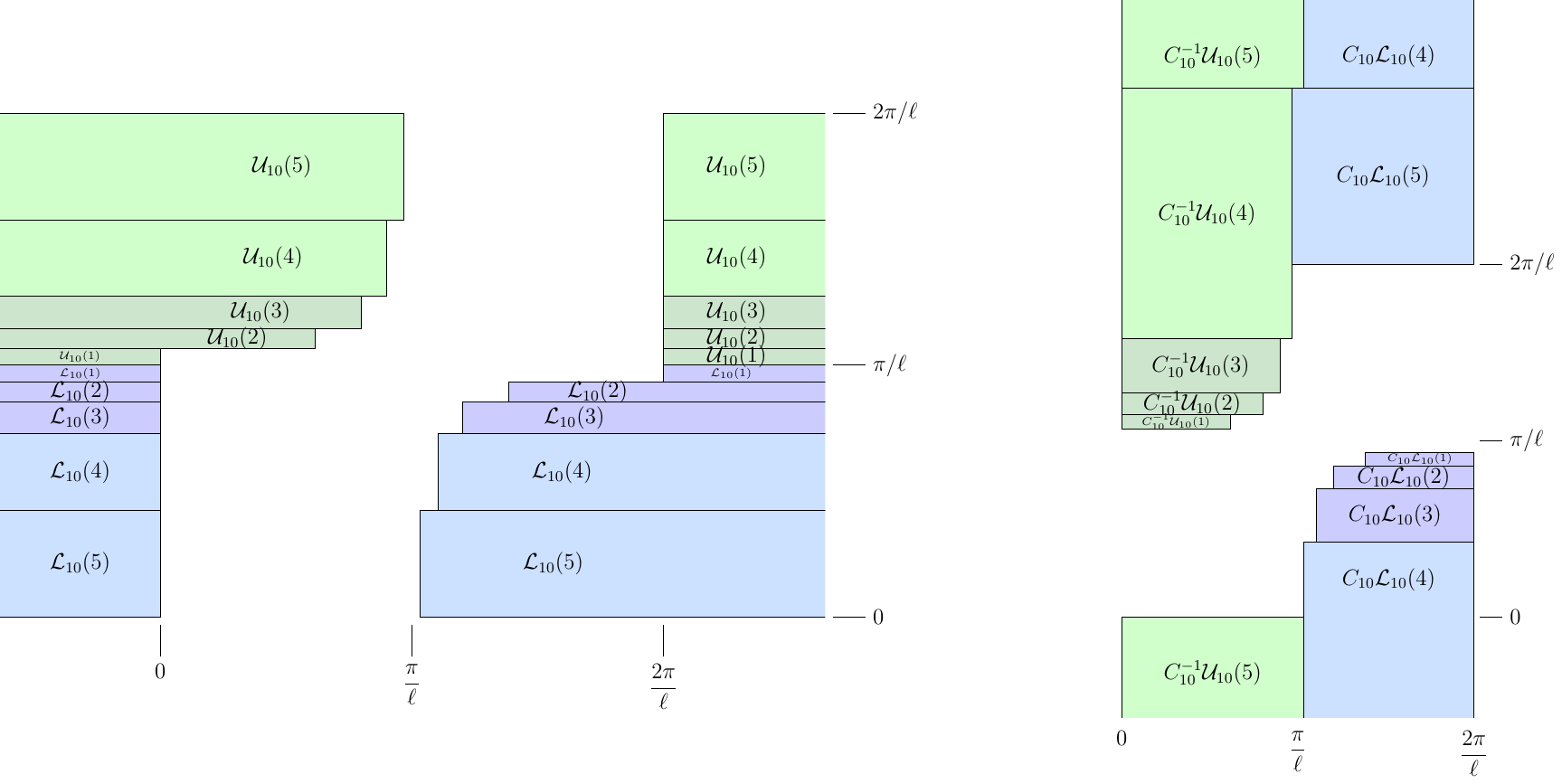}
    \caption{The horizontal strip $\Strip{10}$ and its image under $F_{\A}$}
    \label{fig:strip 10}
\end{figure}
\begin{figure}[ht]
    \includegraphics[width=0.95\textwidth]{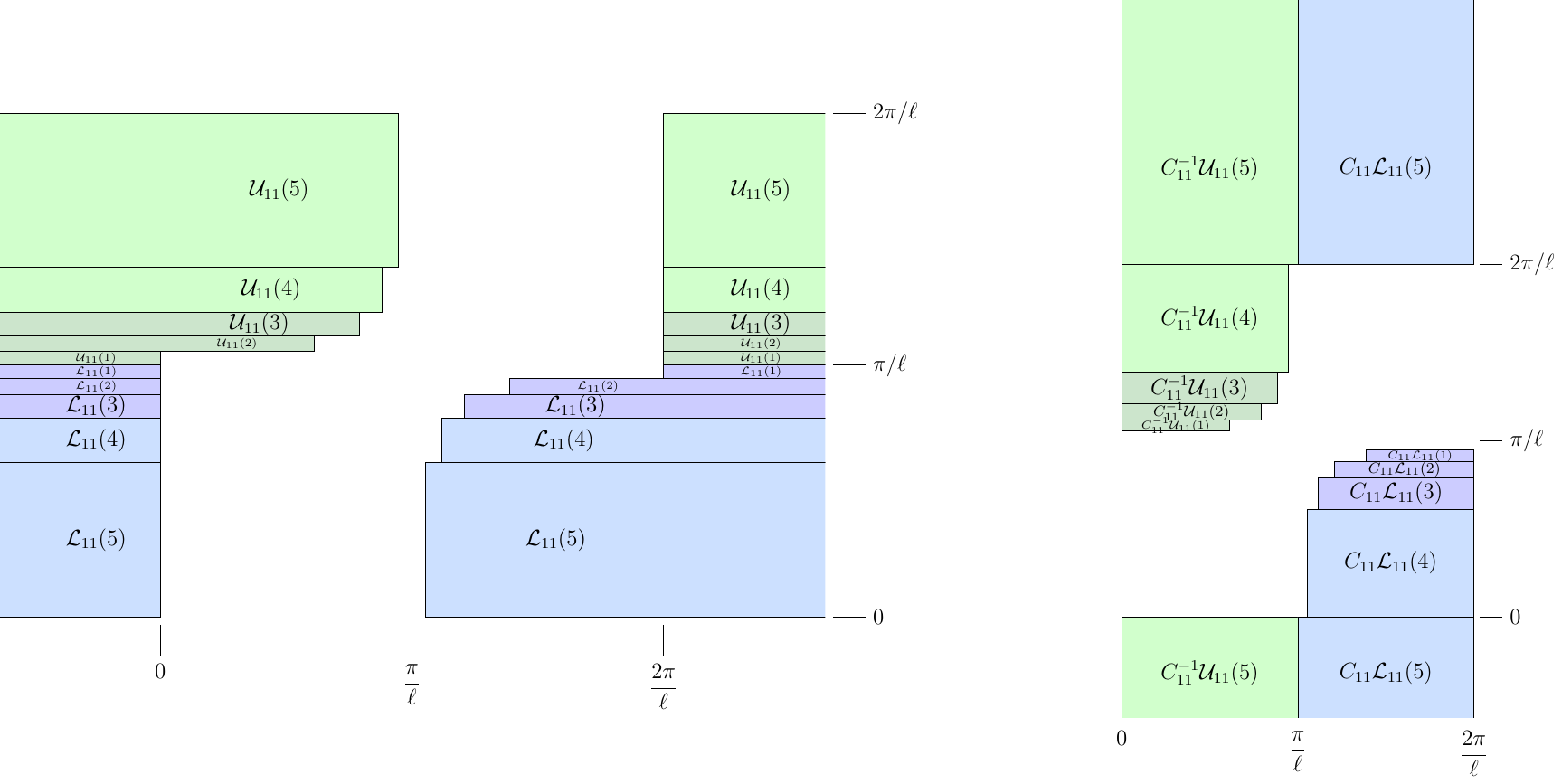}
    \caption{The horizontal strip $\Strip{11}$ and its image under $F_{\A}$}
    \label{fig:strip 11}
\end{figure} 

\begin{thm} \label{thm:bijectivity}
    The map $F_{\A}$ is bijective on the set~$\Omega_{\A}$ defined in~\eqref{omega}.
\end{thm}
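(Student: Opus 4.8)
The plan is to verify bijectivity of $F_{\A}$ on $\Omega_{\A}$ directly, by exploiting the block decomposition~\eqref{omega} and the diagonal symmetry $R\times R$. Since $\Omega_{\A}=\bigcup_{k=1}^\ell (R^{k-1}\times R^{k-1})\Strip{s_k}$ and $R\times R$ commutes with $F_{\A}$ only up to relabeling of the building blocks $\mathcal B(k)$, the first step is to reduce the statement to a claim about how $F_{\A}$ acts on a single fundamental strip $\Strip{s_1}\subset\S\times[1,v^2]$: I would show that $F_{\A}$ sends each rectangle of $\Omega_{\A}$ to a rectangle (or finite union of rectangles) lying in $\Omega_{\A}$, and that these images tile $\Omega_{\A}$ exactly, with no overlaps. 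Because the second coordinate $w$ governs which branch $\gamma_k$ is applied, and each strip $\Strip{s_k}$ has $w$-range inside one block, this is a finite bookkeeping problem: for each of the finitely many rectangles $\mathcal U_m(i)$, $\mathcal L_m(j)$ (and the analogous pieces for $s_k\in\{\square,2,\infty\}$), compute the image under the relevant generator and check where it lands.

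\textbf{Key steps.} First, set up the correspondence between the branches of $f_{\A}$ on $[A_{k-1},A_k)$ and the rectangles: for elliptic $V_k$ of order $m$, the two branches are the clockwise rotation $c_m=\gamma_k$ and $c_m^{-1}=\gamma_{k+1}$, and the orbit structure from \Cref{thm:cycleprop} (with the integers $I=i_1$, $J=j_1$) is exactly what produces the rectangles $\mathcal U_m(i)$ and $\mathcal L_m(j)$ — the $\mathcal U$'s are swept out by iterating $c_m^{-1}$ on the $w$-fiber $[a,v^2]$ and the $\mathcal L$'s by iterating $c_m$ on $[1,a]$. Second, compute $F_{\A}(\mathcal U_m(i))$ and $F_{\A}(\mathcal L_m(j))$: applying $c_m^{-1}$ to $\mathcal U_m(i)=[v^2,c_m^{-i+1}(1)]\times[c_m^{-i+1}(a),c_m^{-i}(a)]$ shifts the $w$-interval to $[c_m^{-i}(a),c_m^{-i-1}(a)]$ and moves the $u$-interval, and one checks this image is $\mathcal U_m(i+1)$ (or, at the top index, wraps into the neighboring block's strip via the cycle-closing identity $f^{I}(\gamma_{k+1}a)=f^{J}(\gamma_k a)$); symmetrically for the $\mathcal L$'s. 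The cycle property is precisely what guarantees the two families of images meet up and close, so that the union of all images is again $\Omega_{\A}$. Third, handle the non-elliptic strips $\Strip{\square},\Strip{2},\Strip{\infty}$: these are partition-independent and the relevant generators are hyperbolic/parabolic, so the image computation is a short explicit check using $R\times R$ and the positions $v^{1/2},v,v^{3/2},v^2$. Fourth, assemble: injectivity follows because the image rectangles have pairwise disjoint interiors (the $w$-coordinate already separates pieces coming from different branches within a block, and $R\times R$-translates of $\Strip{s_k}$ have disjoint $w$-ranges), and surjectivity follows because the images exhaust $\Omega_{\A}$ by the tiling just verified.

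\textbf{Main obstacle.} The hard part will be the boundary/edge accounting at the two ends of each cycle — the rectangles $\mathcal U_m(I+1)$ and $\mathcal L_m(J+1)$, where the $w$-fiber runs up to $v^2$ or down to $1$, i.e. to the block boundary. At these indices the image of a rectangle under $c_m^{\pm1}$ crosses into the adjacent building block, so the branch of $f_{\A}$ changes and one must invoke the precise cycle relation from \Cref{thm:cycleprop}, together with the action of $R\times R$, to confirm that the two partial images (one from the $\mathcal U$ side, one from the $\mathcal L$ side of the neighboring strip) fit together to cover the target strip with no gap and no overlap. This is where the $i_1$-vs-$j_1$ parity and the exact definition of the ``end of the cycle'' matter, and where one must be careful that the half-open interval conventions in~\eqref{eq:boundary map} are consistent across blocks. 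I would isolate this as a lemma (the "closing-up" lemma for one elliptic block), prove it once, and then the global statement for $\Omega_{\A}$ follows by taking the union over $k=1,\dots,\ell$ and applying the $R\times R$-equivariance.
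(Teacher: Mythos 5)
Your strategy coincides with the paper's: reduce to a single strip using the $R\times R$ diagonal symmetry, compute the images of the individual rectangles $\mathcal U_m(i),\mathcal L_m(j)$ (and the explicit pieces of $\Strip\square,\Strip2,\Strip\infty$), and invoke the cycle relation from \Cref{thm:cycleprop} to show the two families of images close up exactly onto the vertical strip $[1,v^2]\times\S$. The "closing-up lemma" you identify as the main obstacle is precisely the step the paper isolates (the identity $c_m^{\myj+1}(v^2)=c_m^{-(\myi+1)}(1)$ and the parity-of-$m$ case analysis for $F_{\A}(\mathcal L_m(\myj+1))$), so the proposal is essentially the paper's proof.
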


\begin{proof}
We claim that $F_{\A}$ maps horizontal strips of~$\Omega_{\A}$ to vertical strips, that is, 
\[ F_{\A}\big(\Omega_{\A} \cap (\S \times [e^{(k-1)(2\pi/\ell)i},e^{k(2\pi/\ell)i}])\big) = \Omega_{\A} \cap ([e^{(k-1)(2\pi/\ell)i},e^{k(2\pi/\ell)i}] \times \S) \]
for $k = 1, ..., \ell$. 
It is sufficient to prove the claim only for $k=1$ by the following reasoning: if $\gamma$ is a side-pairing transformation that glues a side in the $k^\text{th}$ sector \[ \{ z \in \D :  \tfrac{2\pi}\ell (k-1) \le \arg(z) \le \tfrac{2\pi}\ell k \}, \] then $R^{-(k-1)} \circ \gamma \circ R^{k-1}$ is a map that acts in the first sector \[ \{ z \in \D : 0 \le \arg(z) \le \tfrac{2\pi}\ell \}, \] that is, it glues sides of $\Fc_1$ or $\mathcal B(1)$ from page~\pageref{product}.

\medskip
With $k=1$, the claim is that
\[ F_{\A}(\Strip{m}) = \Omega_{\A} \cap ([1,v^2] \times \S) \quad\text{for } m \in \{\infty,\square\}\cup\{2,3,4,...\}. \]

We start with a parabolic horizontal strip
\[
\Strip \infty=([v,1]\times [1,v])\cup ([v^2,v]\times [v,v^2]).
\]
Since $c_\infty(1)=v^2, c_\infty(v)=v, c_\infty^{-1}(v^2)=1, c_\infty^{-1}(v)=v$, we have
\[
F_{\A}(\Strip \infty)=([v,v^2]\times [v^2,v])\cup ([1,v]\times [v,1]),
\]
which by inspection is exactly the expected vertical strip within~$\Omega_{\A}$. That is, $F_{\A}(\Strip \infty)=\Omega_{\A} \cap ([1,v^2]\times \S )$.

The hyperbolic case $\Strip \square$ is similar but consists of four rectangles instead of two.

Next we consider elliptic horizontal strips. When $m=2$ the horizontal strip consists of just one rectangle: $\Strip 2=[v^2,1]\times [1,v^2]$. Since $c_2=c_2^{-1}$,
$c_2(1)=v^2$, and $c_2(v^2)=1$, we have
\[
F_{\A}(\Strip 2)=[1,v^2]\times [v^2,1],
\]
which is the expected vertical strip.

Now let $m\ge 3$. Recall the definitions of $\mathcal L_m(j)$ and $\mathcal U_m(i)$ from \eqref{eq:horizontal strips}. We must calculate the images of $\mathcal L_{m}(j)$ and $\mathcal U_{m}(i)$, which depend on $a = A_{1}\in (1, v^2)$.
For $1 \le j < J+1$ we have
\begin{align*}
\mathcal L_{m}(j) &= [c_m^{j-1}(v^2),1]\times [c_m^j(a),c_m^{j-1}(a)],
\\
F_{\A}(\mathcal L_{m}(j)) &= [c_m^{j}(v^2), v^2] \times [c_m^{j+1}(a), c_m^{j}(a)] \\* &= \mathcal L_{m}(j+1)\cap ([c_m^{J+1}(v^2),v^2]\times \S )\subset\Omega_{\A},
\end{align*}
and for $j=J+1$ we have
\begin{align*}
\mathcal L_{m}(J+1) &= [c_m^{J}(v^2),1]\times [1,c_m^{J}(a)],
\\*
F_{\A}(\mathcal L_{m}(J+1)) &= [c_m^{J+1}(v^2),v^2]\times [v^2, c_m^{J+1}(a)].
\end{align*}
The complex points~$v^2 = e^{(2\pi/\ell)i}$, $c_m^{J+1}(a)$, and $1$ are in counter-clockwise order on the circle, and $[c_m^{J+1}(v^2),v^2]\subset [1,v^2]$.
There are two possible cases:\begin{itemize} \item For odd $m$, $\arg(c_m^{J+1}(a))<0$, then $c_m^{J+1}(a)$ is the end of the cycle. \item For even $m$, $a= c_m^j(v^2)$ for some $j$, and then $c_m^{J+1}(a)=1$.\end{itemize} In either case, we have $F_{\A}(\mathcal L_{m}(J+1)) \subset \Omega_{\A}$ and thus
\[
F_{\A}\left(\bigcup_{j=1}^{J+1}\mathcal L_{m}(j)\right)\subset \Omega_{\A} \quad\text{and}\quad F_{\A}\left(\bigcup_{j=1}^{J+1}\mathcal L_{m}(j)\right)\subset [c_m^{J+1}(v^2),v^2]\times\S.
\]
Similarly, for $1\le i<I+1$ we have  
\begin{align*}
\mathcal U_{m}(i) &= [v^2, c_m^{-i+1}(1)] \times [c_m^{-i+1}(a), c_m^{-i}(a)], 
\\
F_{\A}(\mathcal U_{m}(i)) &= [1, c_m^{-i}(1)]\times [c_m^{-i}(a), c_m^{-i-1}(a)] \\* &= \mathcal U_{m}(i+1)\cap ([1,c_m^{-(I+1)}(1)]\times  \S )\subset\Omega_{\A},
\end{align*}
and for $i=I+1$ we have
\begin{align*}
\mathcal U_{m}(I+1) &= [v^2, c_m^{-I}(1)] \times [c_m^{-I}(a), v^2], 
\\
F_{\A}(\mathcal U_{m}(I+1)) &= [1,c_m^{-(I+1)}(1)]\times [c_m^{-(I+1)}(a), 1]\subset\Omega_{\A}
\end{align*}
since the complex points~$1$, $c_m^{-(I+1)}(a)$, and $v^2$ are in counter-clockwise order, and $[1,c_m^{-(I+1)}(1)]\subset [1,v^2]$.
Thus
\[
F_{\A}\left(\bigcup_{i=1}^{I+1}\mathcal U_{m}(i)\right)\subset\Omega_{\A} \quad\text{and}\quad F_{\A}\left(\bigcup_{i=1}^{I+1}\mathcal U_{m}(i)\right) \subset [1,c_m^{-(I+1)}(1)]\times \S.
\]
Since $c_m^{J+1}(v^2)=c_m^{-(I+1)}(1)$
we conclude that $F_{\A}(\Strip{m})=\Omega_{\A} \cap ([1,v^2]\times  \S )$.
\end{proof}
\subsection{Attractor property}

Having shown (\Cref{thm:bijectivity}) that $F_{\A}$ is bijective on the set~$\Omega_{\A}$, we now want to show that~$\Omega_{\A}$ is the global attractor of $F_{\A}:\S\times\S\setminus\Delta \to \S\times\S\setminus\Delta$. Experimentally, this appears to be true for any choice of elliptic parameters $A_k \in (V_{k-1}, V_{k+1})$, but our existing method of proof (using isometric circles) requires $A_k \in [P_k,Q_k] \subset (V_{k-1}, V_{k+1})$.


\begin{thm}\label{thm:attractor}
If each elliptic partition point satisfies $A_k \in [P_k,Q_k]$, then the set~$\Omega_{\A}$ from~\eqref{omega} is the global attractor of $F_{\A}$ from~\eqref{eq:natural extension}. Moreover, for any initial point $(u_0,w_0)\in \S \times \S \setminus \Delta$, there exists $K > 0$ for which $F_{\A}^K(u_0,w_0)\in \Omega_{\A}$.
\end{thm}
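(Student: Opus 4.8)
The plan is to establish two separate facts: (a) that $\Omega_{\A}$ is \emph{forward-invariant} and that every point of $\S\times\S\setminus\Delta$ enters it after finitely many steps, and (b) that $\Omega_{\A}$ is minimal among such sets, so that it equals $\overline{\bigcap_n F_{\A}^n(\S\times\S\setminus\Delta)}$. Fact (b) will follow quickly from \Cref{thm:bijectivity}: since $F_{\A}$ is a bijection on $\Omega_{\A}$, we have $F_{\A}^n(\Omega_{\A})=\Omega_{\A}$ for all $n$, so once we know $\Omega_{\A}\subseteq F_{\A}^n(\S\times\S\setminus\Delta)$ for every $n$ (which is automatic from forward-invariance and surjectivity of $F_{\A}$ onto $\Omega_{\A}$), the intersection contains $\Omega_{\A}$; the reverse inclusion is exactly the ``every point enters $\Omega_{\A}$'' statement applied in the limit. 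So the entire content is the ``finite time to enter'' claim, which is also the ``Moreover'' part of the theorem.

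First I would reduce to a one-step contraction estimate using isometric circles. The key geometric input is that each side-pairing transformation $\gamma_k$ is a M\"obius transformation whose isometric circle $I(\gamma_k)$ is (the Euclidean circle containing) the side $V_{k-1}V_k$ of the canonical polygon: this is property \eqref{def:isom} of \Cref{def:canonical}, and it is preserved in the canonical case we are treating here. Recall that $\gamma_k$ strictly shrinks Euclidean distances inside $I(\gamma_k)$ and strictly expands them inside $I(\gamma_k^{-1})$. Given a geodesic $(u,w)$ with $w\in[A_{k-1},A_k)$, the point $w$ lies ``behind'' the side $V_{k-1}V_k$ as seen from the polygon, i.e.\ inside the isometric disk of $\gamma_k$, up to the controlled discrepancy caused by $A_k$ differing from $V_k$ when $V_k$ is elliptic — and the hypothesis $A_k\in[P_k,Q_k]$ is precisely what keeps $A_{k-1}$ and $A_k$ within the closed arc cut off by $I(\gamma_k)$, so that $\gamma_k$ is genuinely contracting on the relevant arc of $w$-values. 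The plan is then: show that under iteration the $w$-coordinate is driven toward the cusps/ideal vertices and the pair $(u,w)$ is driven toward $\Omega_{\A}$, using that outside $\Omega_{\A}$ at least one coordinate sits strictly inside an isometric disk so that $F_{\A}$ strictly decreases some natural quantity (e.g.\ a sum of Euclidean gap-lengths, or the ``visual size'' of the complementary arcs), with a uniform lower bound on the decrease away from a neighborhood of the finite set of cusp-orbit points.

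Concretely, the steps in order: (1) record the isometric-circle description of each $\gamma_k$ and the contraction/expansion dichotomy; (2) show $F_{\A}(\Omega_{\A})=\Omega_{\A}$ (already done in \Cref{thm:bijectivity}) and deduce $\Omega_{\A}\subseteq\bigcap_n F_{\A}^n(\S\times\S\setminus\Delta)$; (3) for $(u,w)\notin\Omega_{\A}$, identify which of finitely many ``wrong'' rectangular regions it lies in and show the corresponding $\gamma_k$ moves it strictly ``toward'' $\Omega_{\A}$ — quantified by a Lyapunov-type function $\Phi$ on $\S\times\S\setminus\Delta$ that is nonincreasing under $F_{\A}$, strictly decreasing off $\Omega_{\A}$, and bounded below; (4) rule out the degenerate possibility that an orbit accumulates on $\partial\Omega_{\A}$ without entering, using \Cref{orbit-ideal} and \Cref{prop:finiteorbit} (finiteness of the cusp and elliptic-partition-point orbits) to get a uniform gap; (5) conclude that after some finite $K=K(u_0,w_0)$ the orbit lands in $\Omega_{\A}$, hence $\bigcap_n F_{\A}^n(\S\times\S\setminus\Delta)\subseteq\Omega_{\A}$, and combine with (2) and \Cref{thm:bijectivity} to identify $\overline{\bigcap_n F_{\A}^n(\S\times\S\setminus\Delta)}$ with $\Omega_{\A}$.

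The main obstacle I expect is step (3)–(4): constructing a Lyapunov function that is genuinely monotone under the \emph{piecewise} map $F_{\A}$ and getting a \emph{uniform} rate of descent. The piecewise structure means the contraction constant of $\gamma_k$ degenerates to $1$ exactly at the endpoints $V_{k-1},V_k$ of the isometric circle (which are ideal vertices or lie in the elliptic orbit), so near the cusps the descent is not uniformly bounded below; this is why one needs the finite-orbit results of \Cref{orbit-ideal}, \Cref{thm:cycleprop}, and \Cref{prop:finiteorbit} to argue that an orbit cannot linger near those bad points forever — either it is eventually pushed past a cusp (and the cusp orbit is finite, so this can happen only finitely often) or it enters $\Omega_{\A}$. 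Making this ``eventually escapes the neighborhood of the cusp orbit'' argument precise, and checking that the elliptic case with $A_k\in[P_k,Q_k]$ does not introduce new bad points beyond those already handled by \Cref{prop:markov}, is where the real work lies; everything else is bookkeeping with the explicit rectangles in \eqref{eq:horizontal strips par-hyp} and \eqref{eq:horizontal strips}.
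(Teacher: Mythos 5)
Your overall architecture (forward invariance plus finite entry time, combined with the bijectivity from \Cref{thm:bijectivity}) is sound, and you correctly identify the isometric-circle dichotomy and the finite-orbit results as the essential inputs. But there is a genuine gap exactly where you say ``the real work lies'': you never construct the Lyapunov function of steps (3)--(4), and the paper's proof shows that no single monotone quantity is used or needed. The paper instead splits $\S\times\S\setminus\Omega_{\A}$ into two pieces handled by entirely different mechanisms. The first is a near-diagonal set $\Phi_{\A}=\bigcup_k\Phi_k$ with $\Phi_k=[V_{k-1},Q_k]\times[A_{k-1},A_k]$ (resp.\ $[P_k,V_{k+1}]\times[A_k,A_{k+1}]$ on the other side of an elliptic vertex), on which \emph{both} coordinates lie inside the isometric disk of the acting generator; there the generator \emph{expands}, so $|u_n-w_n|$ grows and the orbit must leave $\Phi_{\A}$ in finite time. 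This is where the hypothesis $A_k\in[P_k,Q_k]$ enters, and note that the mechanism is expansion away from the diagonal, not the contraction ``toward $\Omega_{\A}$'' you describe. The second piece is the remaining ``exceptional set'' $\S\times\S\setminus(\Omega_{\A}\cup\Phi_{\A})$, nonempty only near elliptic vertices of order $m\ge3$ and equal to a finite union of explicit rectangles $\widehat{\mathcal L}_m(j)$, $\widehat{\mathcal U}_m(i)$; the paper computes their forward images directly, shows $\widehat{\mathcal L}_m(j)\subset F_{\A}^{j-1}(\widehat{\mathcal L}_m(1))$, and verifies (with a parity case distinction in $m$) that the image of $\widehat{\mathcal L}_m(1)$ lands inside $\Omega_{\A}$ after a number of steps bounded by the cycle length of \Cref{thm:cycleprop}. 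No uniform descent rate and no ``cannot linger near the cusps forever'' argument is required, because the entry time on the exceptional set is bounded outright by the finite cycle data.

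So the missing idea is this explicit finite computation on the exceptional rectangles (or, alternatively, an actual Lyapunov function with a proved uniform decrease, which you do not supply). As written, your steps (3)--(4) restate the difficulty rather than resolve it: on the exceptional set the relevant branch is the elliptic rotation $c_m$, which cycles a point of $\widehat{\mathcal L}_m(1)$ through roughly $m/2$ rectangles before it exits, and no natural distance-type quantity decreases monotonically along that cycle, so the Lyapunov strategy needs substantial additional input there. To complete the proof you would need either to exhibit such a function and check monotonicity branch by branch, or to carry out the paper's rectangle-by-rectangle image computation; and you should correct the contraction/expansion confusion in your one-step reduction, since it is the expansion inside the isometric circles that drives the escape from $\Phi_{\A}$.
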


\begin{proof} 
We first show, using isometric circles, that the orbit of any $(u_0,w_0)$ will escape a set $\Phi_{\A}$ defined below, and we then deal with the ``exceptional set'' $\S \times \S \setminus (\Omega_{\A} \cup \Phi_{\A})$.

The set $\Phi_{\A}$  (see~\Cref{fig:Phi}) is the union
\[ \Phi_{\A} := \bigcup_{k=1}^N \Phi_k, \]
where $N$ is the number of sides of the polygon and $\Phi_k$ are defined as follows:
\begin{itemize}
    \item If $V_k$ and $V_{k-1}$ are ideal, we define $\Phi_k = [V_{k-1},V_k] \times [V_{k-1},V_k]$, which is also $[A_{k-1},A_k] \times [A_{k-1},A_k]$.
    \item If $V_k$ is elliptic then $V_{k-1}$ and $V_{k+1}$ are both ideal (so $A_{k-1}=V_{k-1}$ and $A_{k+1}=V_{k+1}$), and we define $\Phi_k = [V_{k-1},Q_k] \times [A_{k-1},A_k]$ and $\Phi_{k+1} = [P_k,V_{k+1}] \times [A_{k},A_{k+1}]$.
\end{itemize}

\begin{figure}[htb]
\includegraphics[width=0.75\textwidth]{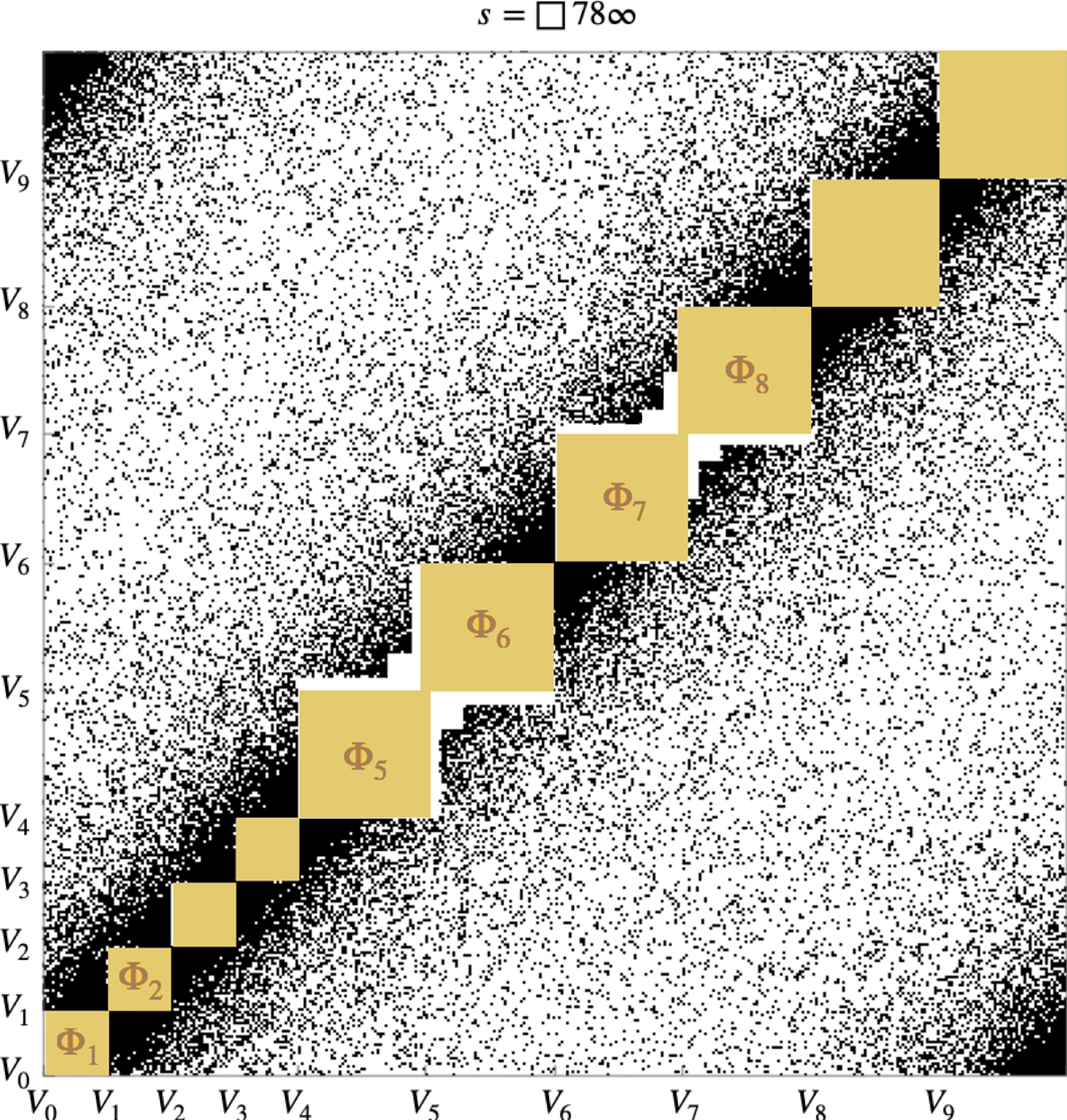}
\caption{By isometric circle argument, orbits must escape the colored set}
\label{fig:Phi}
\end{figure}

Each iterate $(u_n,w_n) := F_{\A}^n(u_0,w_0)$ is acted on by a M\"obius transformation, which we call $\g_k$. By Property~(\ref{def:isom}) of \Cref{def:canonical}, the isometric circle of $\g_k$ contains a side of the canonical polygon, and $\g_k$ is the side-pairing transformation.
From this we can conclude that there exists a positive integer $n$, dependent on $u_0$ and $w_0$, such that the point $(u_n,w_n)$ escapes the set $\Phi_{\A}$ because each M\"obius transformation $\gamma_{k}$ is expanding in the interior of its isometric circle, including the intervals that define each $\Phi_k$. Thus the distance between $u_n$ and $w_n$ must grow sufficiently for the points to eventually belong to different isometric circles, hence there exists $n\ge 0$ such that $(u_n,w_n)\notin\Phi_{\A}$. 

\medskip
In order to prove the attracting property of the set~$\Omega_{\A}$, we need only to consider points $(u,w)\notin (\Omega_{\A} \cup \Phi_{\A})$ and show that they will enter~$\Omega_{\A}$ after finitely many iterations of $F_{\A}$. Notice that a set $[A_{k-1},A_{k+1}]\times [A_{k-1},A_{k+1}]\setminus (\Omega_{\A} \cup \Phi_{\A})$ is non-empty only when $V_k$ is an elliptic point of order greater than $2$, so we investigate such a situation. These ``exceptional sets'' are the white areas of \Cref{fig:Phi}.

We may assume that we are dealing with the standard-position $\mathcal B(1)$, that is, we have $w \in [1,v^2]$.
The exceptional set for the elliptic vertex $V_1$ are shown as multicolored rectangles in \Cref{fig:exceptional}.

\begin{figure}[htb]
\begin{tikzpicture}
    \draw (3,3) node {\includegraphics[width=6cm]{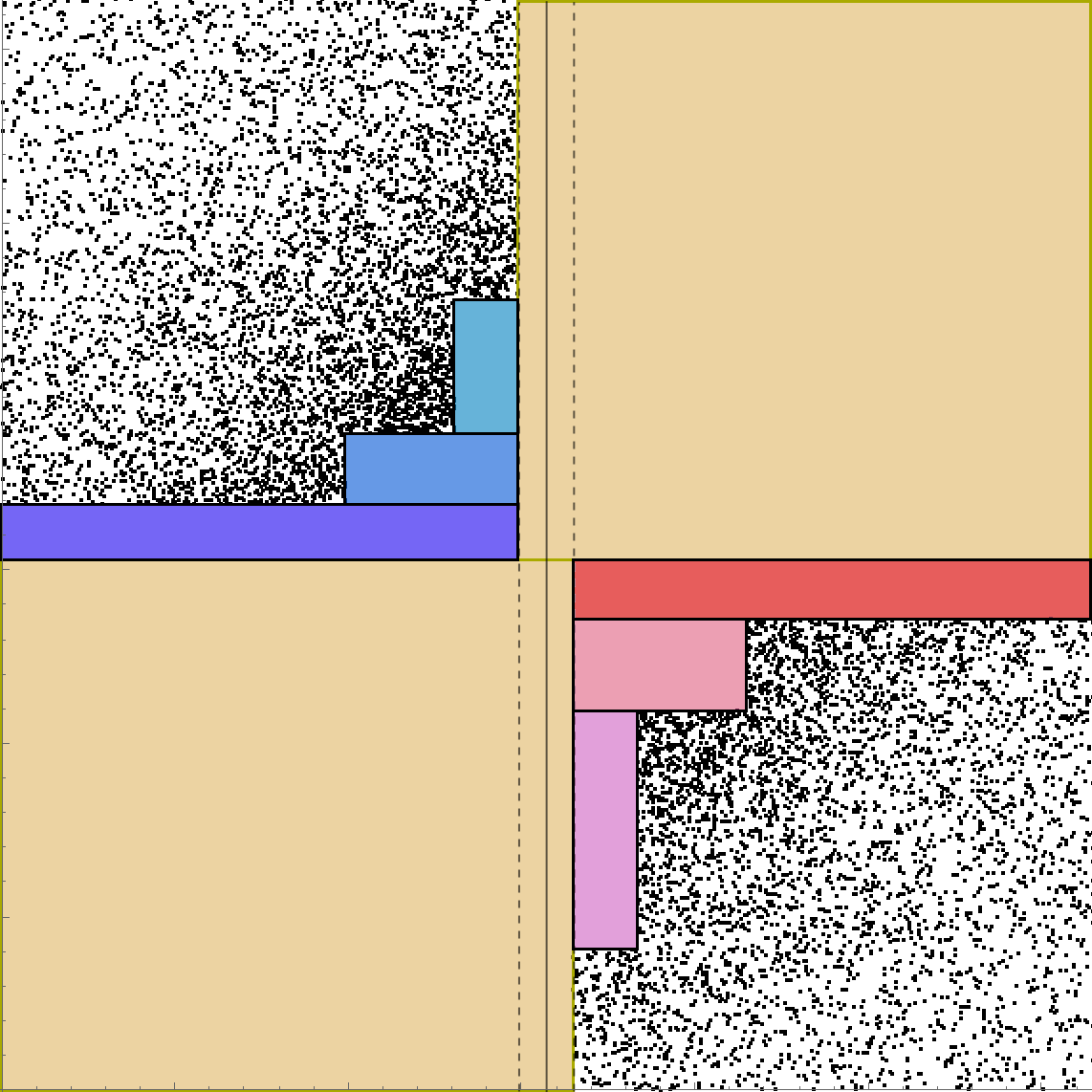}};
    \draw (0,6) node [left] {$v^2$} -- (0,2.9) node [left] {$a$} -- (0,0) node [left] {$1$} node [below] {$1$} -- (2.85,0) node [below] {$p$} -- (3.15,0) node [below] {$q$} -- (6,0) node [below] {$v^2$};
    \draw [yellow!33!black] (1.5,1.5) node {$\Phi_1$} (4.5,4.75) node {$\Phi_2$};
    \draw [thick,black] (2.4,3.05) -- (1.5,2.5) node [left,blue!50!gray] {$\widehat{\mathcal U}_8(1)$};
    \draw [thick,black] (3.5,2.8) -- (4.5,3.5) node [right,red!67!gray] {$\widehat{\mathcal L}_8(1)$};

    \begin{scope}[xshift=8cm]
    \draw (3,3) node {\includegraphics[width=6cm]{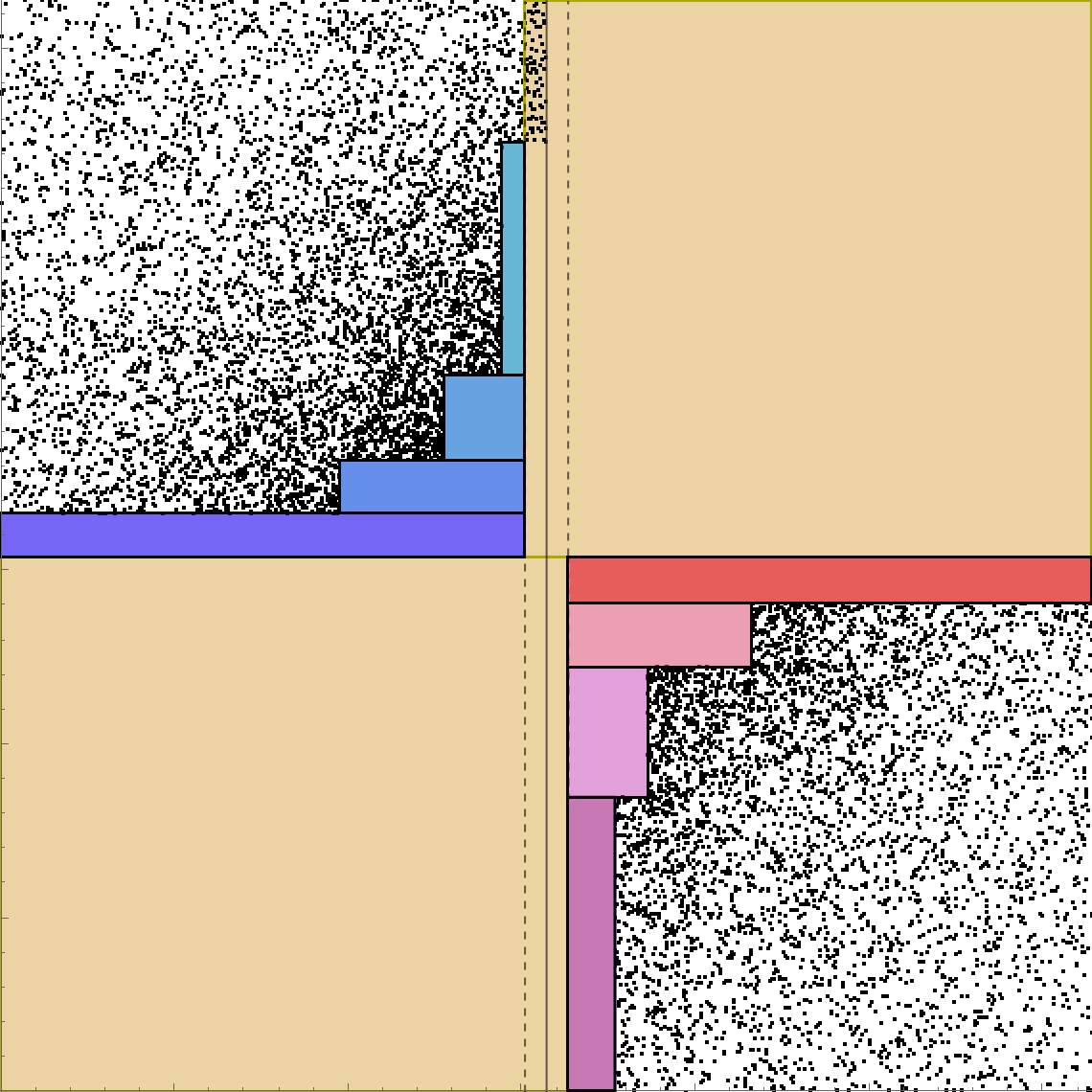}};
    \draw (0,6) node [left] {$v^2$} -- (0,2.9) node [left] {$a$} -- (0,0) node [left] {$1$} node [below] {$1$} -- (2.85,0) node [below] {$p$} -- (3.15,0) node [below] {$q$} -- (6,0) node [below] {$v^2$};
    \draw [yellow!33!black] (1.5,1.5) node {$\Phi_1$} (4.5,4.75) node {$\Phi_2$};
    \draw [thick,black] (2.4,3.05) -- (1.5,2.5) node [left,blue!50!gray] {$\widehat{\mathcal U}_9(1)$};
    \draw [thick,black] (3.5,2.8) -- (4.5,3.5) node [right,red!67!gray] {$\widehat{\mathcal L}_9(1)$};
    \draw [thick,black] (3.25,0.7) -- (2.5,0.5) node [left,magenta!50!black] {$\widehat{\mathcal L}_9(4)\!$};
    \end{scope}
\end{tikzpicture}
\caption{Rectangles of the exceptional set for $m=8$ (left) and $m = 9$ (right)}
\label{fig:exceptional}
\end{figure}

\medskip\noindent\textbf{Case $\boldsymbol m$ even.} In this case, the exceptional lower rectangles are
\begin{align}\label{eq:lower}
    \widehat{\mathcal L}_m(j) &= [q,c_m^{j-1}(v^2)]\times [c_m^j(a),c_m^{j-1}(a)], \quad 1\le j \le J
\end{align}
and the upper rectangles are
\begin{align}\label{eq:upper}
    \widehat{\mathcal U}_m(i) &=[c_m^{-i+1}(1),p]\times [c_m^{-i+1}(a), c_m^{-i}(a)], \quad 1\le i \le I 
\end{align}
(There are no exceptional rectangles corresponding to ${\mathcal U}_m(I+1)$ and ${\mathcal L}_m(J+1)$.)

It is enough to track the images of $\widehat{\mathcal L}_m(1)$ and $\widehat{\mathcal U}_m(1)$ only. Indeed, if $1\le j<J$,
\begin{equation}\label{eq:subset}
    F_{\A}(\widehat{\mathcal L}_m(j))  =c_m(\widehat{\mathcal L}_m(j))=[p,c_m^j(v^2)]\times [c_m^{j+1}(a),c_m^j(a)]\supset \widehat{\mathcal L}_m(j+1).
\end{equation}
This shows that $\widehat{\mathcal L}_m(j)\subset F_{\A}^{j-1}(\widehat{\mathcal L}_m(1))$ for $2\le j\le J$.
Similarly, one can show that $\widehat{\mathcal U}_m(i)\subset F_{\A}^{i-1}(\widehat{\mathcal U}_m(1))$ for $2\le i\le I$. This allows us to focus on  the exceptional sets  $\widehat{\mathcal L}_m(1)$ and $\widehat{\mathcal U}_m(1)$.

If $(u,w)\in \widehat{\mathcal L}_m(1)$ (red rectangle in \Cref{fig:exceptional-images} left), then
\begin{align*}
F_{\A}^{J}(\widehat{\mathcal L}_m(1)) 
&= [c_m^{J}(q),c_m^{J}(v^2)]\times [c_m^{J+1}(a),c_m^{J}(a)] \\*
&= [c_m^{J}(q),c_m^{J}(v^2)]\times [c_m^{J+1}(a),1]\cup [c_m^{J}(q),c_m^{J}(v^2)]\times [1,c_m^{J}(a)].
\end{align*}
The first rectangular piece belongs to the upper part of~$\Omega_{\A}$,  since $\arg(c_m^{J+1}(a))\ge\arg(v^2)=\frac{2\pi}{\ell}$ (see also the proof of \Cref{thm:bijectivity}). The image of the second piece (green rectangle) is
\begin{align*}
F_{\A}\big([c_m^{J}(q),c_m^{J}(v^2)]\times [1,c_m^{J}(a)]\big) 
&=[c_m^{J+1}(q),c_m^{J+1}(v^2)]\times [c_m(1),c_m^{J+2}(a)] \\
&=[1,p]\times [v^2,c_m^{J+2}(a)] \\
&\subset [1,p] \times [v^2, 1] \;\;\subset\;\; \Omega_{\A}.
\end{align*}
One can track $\widehat{\mathcal U}_m(1)$, similarly, to prove that all its points will enter~$\Omega_{\A}$, after finitely many iterations.

\begin{figure}[htb]
\def\s{0.9}
\begin{tikzpicture}[scale=0.9]
    \draw (3,5.95) node {\includegraphics[width=5.4cm]{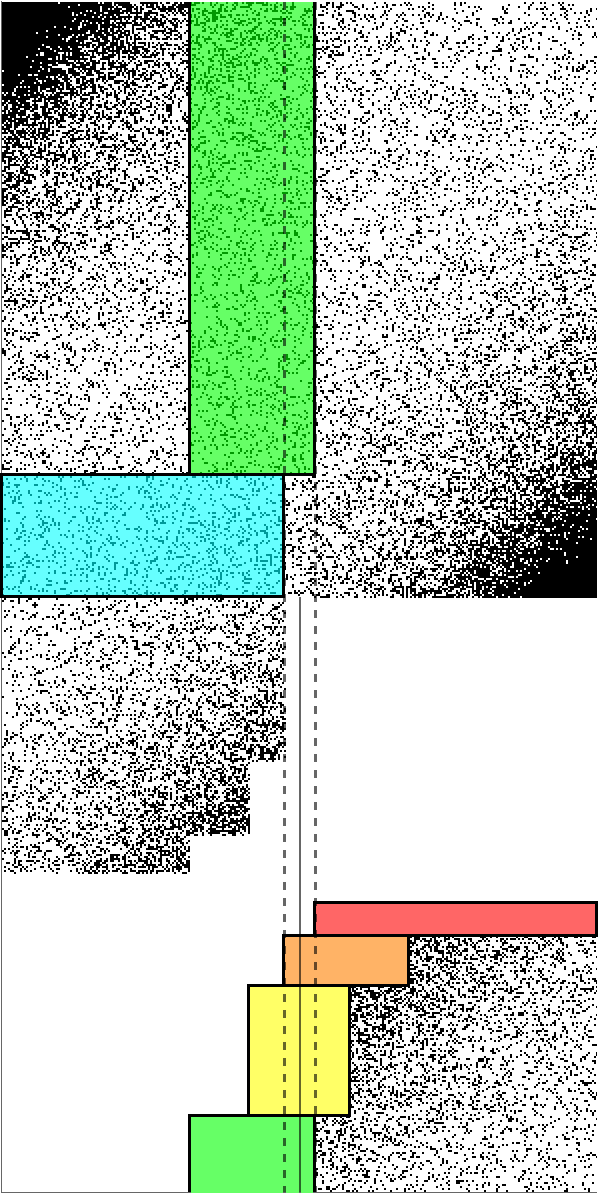}}; 
    \draw (0,6) node [left] {$v^2$} (0,0) node [left] {$1$} node [below] {$1$} (2.85,0) node [below] {$p$} (3.15,0) node [below] {$q$} (6,0) node [below] {$v^2$};
    \draw [yellow!50!black,dashed] (3.15,2.89) -- (0,2.89) node [left,black] {$a$};
    \draw [thick,black] (3.5,2.78) -- (4.5,3.5) node [above,red!67!gray] {\;\;\;$\widehat{\mathcal L}_8(1)$};

    \begin{scope}[xshift=8cm]
    \draw (3,5.95) node {\includegraphics[width=5.4cm]{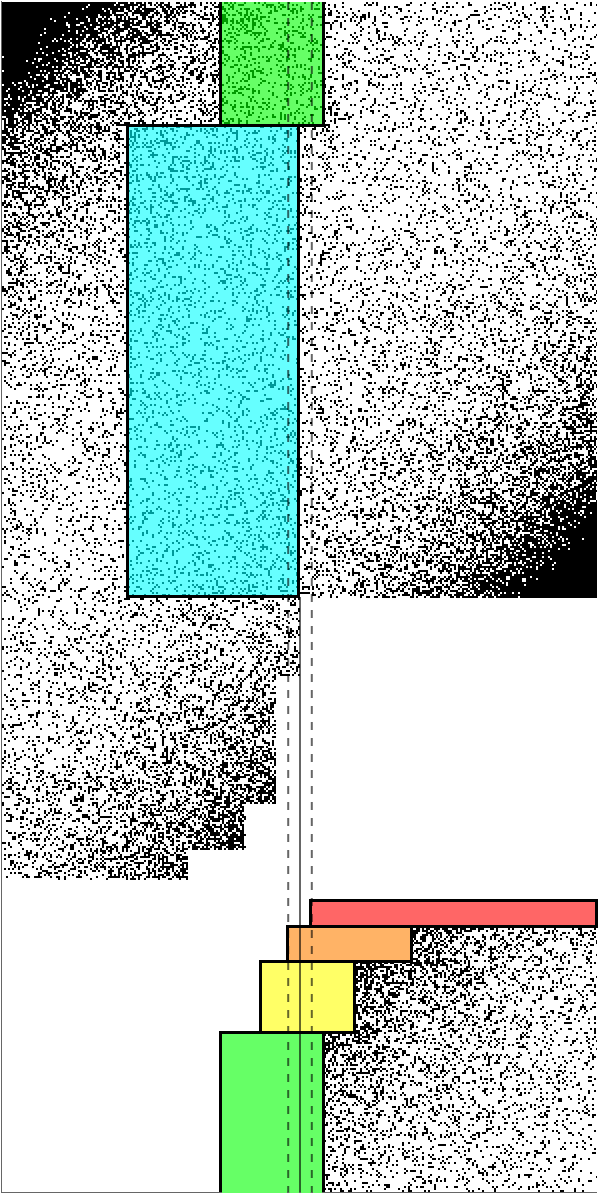}}; 
    \draw (0,6) node [left] {$v^2$} (0,0) node [left] {$1$} node [below] {$1$} (2.85,0) node [below] {$p$} (3.15,0) node [below] {$q$} (6,0) node [below] {$v^2$};
    \draw [yellow!50!black,dashed] (3.11,2.91) -- (0,2.91) node [left,black] {$a$};
    \draw [thick,black] (3.5,2.78) -- (4.5,3.5) node [above,red!67!gray] {\;\;\;$\widehat{\mathcal L}_9(1)$};
    \end{scope}
\end{tikzpicture}
\caption{Images of exceptional rectangle $\widehat{\mathcal L}_8(1)$ (left) and $\widehat{\mathcal L}_9(1)$ (right). The cyan rectangle is the image of only the part of the green rectangle that is outside $\Omega_\A$}
\label{fig:exceptional-images}
\end{figure}

\medskip

\noindent\textbf{Case $\boldsymbol m$ odd.} In this case, in addition to the exceptional lower and upper rectangles described in \eqref{eq:lower} and \eqref{eq:upper}
we have an exceptional lower rectangle
\[
\widehat{\mathcal L}_m(J+1) = [q,c_m^{J}(v^2)]\times [1,c_m^{J}(a)] \; \text{ if }\; a\in [p, v]
\]
or an exceptional upper rectangle
\[
\widehat{\mathcal U}_m(I+1) = [c_m^{-I}(1),p]\times [c_m^{-I}(a), v^2] \; \text{ if } \; a\in (v,q].
\]
In what follows, we treat the case $a\in [p,v]$ with the additional exceptional rectangle $\widehat{\mathcal L}_m(J+1)$. 
The inclusions $\widehat{\mathcal L}_m(j)\subset F_{\A}^{j-1}(\widehat{\mathcal L}_m(1))$ for $2\le j\le J$ and $\widehat{\mathcal U}_m(i)\subset F_{\A}^{i-1}(\widehat{\mathcal U}_m(1))$ for $2\le i\le I$ remain valid, as in the case $m$ even. Also 
\[F_{\A}(\widehat{\mathcal L}_m(J))=c_m(\widehat{\mathcal L}_m(J))=[p,c_m^{J}(v^2)]\times [1,c_m^{J}(a)]\supset \widehat{\mathcal L}_m(J+1)\]
since $c_m(q) = p$ and  the points $1, p, v, q, v^2$ are in counter-clockwise order.
Thus we only need to analyze the iterates of points in the exceptional sets  $\widehat{\mathcal L}_m(1)$ and $\widehat{\mathcal U}_m(1)$.

\smallskip

If $(u,w)\in \widehat{\mathcal L}_m(1)$ (red rectangle in \Cref{fig:exceptional-images} right), then 
\begin{align*}
F_{\A}^{J+1}(\widehat{\mathcal L}_m(1)) &=[c_m^{J+1}(q),v]\times [v^2,1] \;(\text{green rectangle}) \\
&\subset [1,v] \times [v^2,1] \;\subset\; \Omega_{\A}
\end{align*}
because  $c_m^{J+1}(q) = c_m^{J}(p) \in [1,v]$ (see also \Cref{fig:flower}). 

One can track $\widehat{\mathcal U}_m(1)$, similarly, to prove that all its points will enter~$\Omega_{\A}$, after finitely many iterations.
\end{proof}

The proof of \Cref{thm:attractor} uses isometric circles to show that the orbit escapes $\Phi_{\A}$, and for this reason we require $A_k \in [P_k,Q_k]$ in the statement of the theorem. Most likely this restriction in unnecessary:
\begin{conjecture*}\label{con1}
If each elliptic partition point satisfies $A_k \in (V_{k-1},V_{k+1})$, then the set~$\Omega_{\A}$ from~\eqref{omega} is the global attractor for $F_{\A}$.
\end{conjecture*}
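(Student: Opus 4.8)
Since \Cref{thm:bijectivity} shows that $F_{\A}$ is bijective on $\Omega_{\A}$ for \emph{every} admissible partition (the constraint $A_k\in[P_k,Q_k]$ is never used there), only the attracting property remains to be established. Exactly as in the proof of \Cref{thm:attractor}, the diagonal symmetry $R\times R$, together with the fact that the parabolic, hyperbolic, and order-$2$ horizontal strips of $\Omega_{\A}$ --- and the escape argument on the corresponding sets $\Phi_k$ --- do not depend on $\A$, reduces the problem to a single elliptic block $\mathcal B(1)$ with vertex $V_1$ of order $m\ge 3$ and partition point $a=A_1$, now allowed anywhere in the arc $(1,v^2)$ rather than only in $[p,q]$. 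If $a=c_m^i(v^2)$ for some $i$, then \Cref{prop:finiteorbit} gives that the orbits of the partition point are finite, $f_{\A}$ is finite Markov as in \Cref{prop:markov}, and the rectangular description of the attractor follows from the finite-state presentation of $F_{\A}$. So we may assume the cycle property of \Cref{thm:cycleprop} holds strictly, and the rectangles $\mathcal U_m(i),\mathcal L_m(j),\widehat{\mathcal U}_m(i),\widehat{\mathcal L}_m(j)$ are defined by the same formulas as in \Cref{sec:canonical proof}.

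The assumption $a\in[p,q]$ enters the proof of \Cref{thm:attractor} in exactly one place: the ``escape $\Phi_{\A}$'' step uses that each branch transformation is expanding on the first-coordinate interval of its piece of $\Phi_{\A}$, that is, that this interval lies inside the relevant isometric circle. For $a\in(q,v^2)$ (the case $a\in(1,p)$ being symmetric) the branch interval on which $f_{\A}=c_m$ is $[1,a)$, which overshoots the isometric circle of $c_m$ --- whose interior meets the block arc in $[1,q]$ --- by the sliver $[q,a)$. I would split the offending piece of $\Phi_{\A}$ into its part over $[1,q]$, on which the isometric-circle argument of \Cref{thm:attractor} goes through verbatim, and a ``transition rectangle'' $T:=[q,a]\times[1,a]$, and then show that no orbit stays in $T$ forever. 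The point is that $c_m$ has finite order $m$: starting from $u\in[q,a]$, the points $u,c_m(u),\dots,c_m^{m-1}(u)$ visit the arc $[q,a]$ only boundedly often, and which of them do is governed entirely by $c_m(q)=p$ and the counter-clockwise order of $1,p,v,q,v^2$ and of their $c_m$-orbits (the ``flower'' combinatorics of \Cref{fig:flower}). Hence after a bounded number of iterations of $F_{\A}$, either the first coordinate has entered $[1,q]$, where the isometric-circle mechanism forces escape from $\Phi_{\A}$, or the pair $(u,w)$ has already landed in $\Omega_{\A}$.

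It then remains to redo the ``exceptional set'' analysis with $a$ unrestricted. The forward-image identities used there --- $\widehat{\mathcal L}_m(j)\subset F_{\A}^{j-1}(\widehat{\mathcal L}_m(1))$, the parity dichotomy for the image of the last lower rectangle, the containment $c_m^{J+1}(q)=c_m^{J}(p)\in[1,v]$ (where $J=j_1$ is the cycle exponent of $a$), and the symmetric statements for the $\widehat{\mathcal U}_m$'s --- are all consequences of $c_m(q)=p$ and the cyclic order of the $c_m$-orbits of the five points $1,p,v,q,v^2$, and none of them depends on where $a$ sits; the only change is that for $a$ outside $[p,q]$ some of these rectangles now abut a transition rectangle $T$ instead of $\Phi_{\A}$ proper, which affects nothing. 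Assembling the three ingredients --- escape from the enlarged $\Phi_{\A}$, escape from each transition rectangle, and entry into $\Omega_{\A}$ from the exceptional set --- gives that $\Omega_{\A}$ is the global attractor, with orbit-dependent entry time, for all $A_k\in(V_{k-1},V_{k+1})$.

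The genuinely hard step is the transition-rectangle analysis of the second paragraph. Once an orbit leaves the isometric circle of a branch, the convenient dichotomy ``the gap between the two coordinates strictly increases at each step'' is gone, and one must instead rule out that the non-autonomous composition of branch maps along an orbit keeps re-injecting the first coordinate into the contracting arc $[q,a]$. Finiteness of the order of $c_m$ makes this plausible, but turning ``$c_m^m=\id$'' into an explicit bound on the dwell time in $T$ seems to require a case analysis on the positions of the orbit points $c_m^i(a)$ and $c_m^i(q)$ relative to \emph{all} partition points $A_1,\dots,A_N$ --- precisely the bookkeeping that the restriction $A_k\in[P_k,Q_k]$ was introduced to avoid. (A continuity argument in the parameter $a$ is an appealing alternative, but the global nature of the attractor makes openness of the attracting property unclear, so I would pursue the direct route.)
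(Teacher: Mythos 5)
You should first note that the statement you are proving is labeled a \emph{conjecture} in the paper, not a theorem: there is no proof to compare against. The authors say so explicitly immediately before stating it --- ``Experimentally, this appears to be true..., but our existing method of proof (using isometric circles) requires $A_k\in[P_k,Q_k]$.'' Your reading of that sentence is exactly right: the only place the restriction enters Theorem~\ref{thm:attractor} is the escape-from-$\Phi_{\A}$ step, and you have located the obstruction correctly. When $a=A_1\in(q,v^2)$, the branch interval $[1,a)$ on which $f_{\A}=c_m$ acts overshoots the isometric circle of $c_m$ by the sliver $[q,a)$, where $\abs{c_m'}<1$; the ``gap grows at every step'' mechanism fails there, and your transition rectangle $T=[q,a]\times[1,a]$ isolates this failure cleanly. (The observation that Theorem~\ref{thm:bijectivity} already holds for all $a\in(1,v^2)$ with the cycle property is also correct.)

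But the second paragraph of your strategy does not close the gap, and you concede this yourself at the end. The claim that ``after a bounded number of iterations of $F_{\A}$, either the first coordinate has entered $[1,q]$ ... or the pair ... has already landed in $\Omega_{\A}$'' is a local restatement of the conjecture, not a derivation of it. That $u,c_m(u),\ldots,c_m^{m-1}(u)$ visit $[q,a]$ only finitely often controls \emph{consecutive} applications of $c_m$, which occur only while the second coordinate stays in $[1,a)$; once $w$ exits and some $\gamma_j\neq c_m$ acts, the first coordinate can be sent anywhere on~$\S$, and nothing in your argument precludes a later return of $(u_n,w_n)$ to $T$ --- or to the analogous transition rectangle of any other elliptic block. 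Turning $c_m^m=\id$ into a uniform dwell-time bound under the non-autonomous composition of branch maps is precisely the missing ingredient, and precisely the bookkeeping that the restriction $A_k\in[P_k,Q_k]$ was introduced to bypass, as you diagnose. So what you have written is an honest, well-aimed roadmap with the hard step flagged but unresolved; it is not a proof, and the conjecture remains open in the paper as well.
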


With the canonical case complete, we now prove the \hyperlink{main}{main theorem}.

\section{Proof of Main Theorem}\label{sec:main proof}

Let~$\G$ be any Fuchsian group of the first kind with signature $(g;m_1,\dots,m_r;t\ge 1)$.
Let~$\Fc^*$ be the canonical marked quasi-ideal polygon with this signature and $\G^*$ the group generated by side-pairing transformations of~$\Fc^*$. 
By \Cref{thm:Deform} there exists a Fenchel--Nielsen map $h:\bar\D\to\bar\D$ such that $\G=h\circ\G^*\circ h^{-1}$.

For the canonical case, let $\A^* = \{A^*_1,\dots,A^*_N\}$ be a partition of~$\S$ with $A^*_k\in [P^*_k,Q^*_k]$ if $A^*_k$ is an elliptic partition point and $A^*_k=V^*_k$ if $A^*_k$ is an ideal partition point,
\[
f_{\A^*}(x)=\gamma^*_k(x)\text{ for }x\in [A^*_{k-1},A^*_k)
\] be the associated boundary map, and 
\[F_{\A^*}(u,w)=(\gamma^*_k(u),\gamma^*_k(w))\text{ for }w\in [A^*_{k-1},A^*_k)
\] be its natural extension.

These data are transferred to the data for~$\G$ by the Fenchel--Nielsen map:
\begin{itemize}
    \item By~\eqref{eq:geometric}, the maps $\gamma_k := h\circ \gamma^*_k\circ h^{-1}$ generate~$\G$.
    \item $\A=h(\A^*) = \{A_1,\dots,A_N\}$ is a partition of~$\S$. Ideal $A^*_k$ are mapped to ideal $A_k$ by construction, and if $A^*_k\in [P^*_k,Q^*_k]$ is an elliptic partition point, then $A_k$ belongs to a non-degenerate closed interval $[P_k,Q_k] := [h(P^*_k),h(Q^*_k)]$. Note that $\arg(P_k)<\arg(Q_k)$ since $h$ preserves the order of points on~$\S$. Notice that $P_k$ and $Q_k$ are not necessarily the second endpoints of the geodesic rays $(V_{k-1},V_k)$ and $(V_{k+1},V_k)$.
    \item $f_{\A} (x)=\gamma_k(x)$ for $x\in [A_{k-1},A_{k})$ is the boundary map acting by generators of the group~$\G$.
    \item $F_{\A}(u,w) = (\gamma_k(u),\gamma_k(w))\text{ for }w\in [A_{k-1},A_k)$ is its natural extension map.
\end{itemize}
Then
\[
f_{\A} = h\circ f_{\A^*}\circ h^{-1}
\]
and
\[
F_{\A}=(h\times h)\circ F_{\A^*} \circ(h^{-1}\times h^{-1}).
\]
Note that if we start with any partition $\A$ for~$\G$ with elliptic partition points $A_k\in [P_k,Q_k]$, then
\begin{itemize}
    \item $h^{-1}(\A) = \A^*$ will be a partition for $\G^*$ with elliptic partition points $A^*_k\in [P^*_k,Q^*_k]$, 
    \item $f_{\A^*}=h^{-1}\circ f_{\A} \circ h$ will be a boundary map for $\G^*$, and
    \item $F_{\A^*}=(h^{-1}\times h^{-1})\circ F_{\A} \circ(h\times h )$ will be the natural extension map for $f_{\A^*}$.
\end{itemize}

Since the map $f_{\mathcal P^*}$ is Markov by  \Cref{prop:markov}, we know that $f_{h(\mathcal P^*)}$ is also Markov, which proves item~\ref{main-i} of the main theorem (we could use $\mathcal Q^*$ or $\mathcal M^*$ as well).
 
By \Cref{thm:attractor}, $F_{\A^*}$ has global attractor~$\Omega_{\A^*}$ with finite rectangular structure.
Then $(h\times h)(\Omega_{\A^*})$ is the global attractor (and domain of bijectivity) for $F_{\A}$, and since the map $h\times h$ preserves horizontal and vertical lines, this domain has finite rectangular structure as well, proving~\ref{main-ii} and~\ref{main-iii}. \qed

\section{Application to coding of geodesic flow}
\label{sec:remarks}The finite rectangular structure of the attractor for the reduction map, in addition to Don Zagier's inversion problem for modular forms \cite{Zagier-2023} mentioned in the introduction, can be also used for coding geodesics on $\G\backslash \D$. For previous related work, see the results of \cite{KU-ab-applications} for the modular group and \cite{AK-revisited} for surface groups.


A geodesic in $\D$ from $u$ to $w$ ($u,w\in\S$) is called {\em reduced} if $(u,w)\in\Omega_\A$. According to the main theorem, every geodesic can be reduced in finitely many steps. Let us start with a reduced geodesic from $u_0$ to $w_0$. We associate to $w_0$ a symbolic sequence
\[
[w_0]=[n_0,n_1,n_2,\dots]
\]
in the finite alphabet $\{1,2,3,\dots, 4g+2r+2(t-1)\}$ that corresponds to the sides of~$\Fc$,
where for $k\ge 0$ if $f_\A^k(w_0)\in [A_i,A_{i+1})$, then~$n_k$ is the side of~$\Fc$ that is paired with side $i$.
By successive application of~$F_\A$ we obtain a sequence of reduced geodesics from $u_k$ to $w_k$, where 
\[
[w_k]=[n_k,n_{k+1},\dots].
\]
From the bijectivity of $F_\A$ on $\Omega_\A$ we can continue the sequence to the past, thus associating to a given geodesic on the orbifold a bi-infinite symbolic sequence
\[
[\dots n_{-2},n_{-1},n_0,n_1,n_2,\dots].
\]
The left shift of the sequence corresponds to an application of the map $F_\A$ to the corresponding geodesic. Using the Markov partitions $\mathcal P, \mathcal M, \mathcal Q$, one can obtain a Markov symbolic coding.
The parameterization of the cross section by the attractor will allow us to represent the geodesic flow on $\G\backslash \D$ as a special flow over a symbolic dynamical system. This application is a subject of current work by the authors and will appear elsewhere.

\appendix
\section{Explicit construction of the canonical polygon}\label{sec:explicit}

Given signature $(g;m_1,\dots,m_r;t\ge 1)$, for each cyclic subgroup we construct a canonical fundamental polygon. Recall that $\ell=g+r+t-1$ is the number of building blocks (number of sectors) in the fundamental polygon. Also recall equation \eqref{string}, which defines the string
\begin{equation*}
    s \;\;=\;\;
    [\STRING]
    \;\;=\;\;
    \{\underbrace{\square,\dots,\square}_{\text{\scriptsize$g$}}, m_1,\dots, m_r, \underbrace{\infty,\dots,\infty}_{\text{\scriptsize$t-1$}}\}
\end{equation*}
corresponding to the signature $(g; m_1,m_2,\dots, m_r;t\ge 1)$.
We now define the infinite-area polygons
\providecommand\setbuilder[2]{\ensuremath{\left\{#1:#2\right\}}}
\begin{align*}
    \Fc_m &:= \setbuilder{z\in\D}{ \abs{z-\sec(\theta)e^{\theta i}} \ge \tan(\theta) \text{ and}\abs{z-\sec(\theta)e^{(2\pi/\ell-\theta) i}} \ge \tan(\theta) }, \\*&\qquad\qquad\text{where }\theta = \arctan\big(\tfrac{\sin(\pi/\ell)}{\cos(\pi/\ell)+\cos(\pi/m)}\big).
\end{align*}
In the case $m=2$ the two Euclidean circles bounding $\Fc_m$ coincide, and in the degenerate case $m=\ell=2$ it becomes a line: $\Fc_2 = \setbuilder{z\in\D}{\Im(z)\le0}$ when $\ell=2$. In the limiting case $m = \infty$ this becomes
\begin{align*}
    \Fc_\infty &:= \setbuilder{z\in\D}{ \abs{z-\sec(\theta)e^{\theta i}} \ge \tan(\theta) \text{ and}\abs{z-\sec(\theta)e^{(2\pi/\ell-\theta) i}} \ge \tan(\theta) }, \\*&\qquad\qquad\text{where }\theta = \arctan\big(\tfrac{\sin(\pi/\ell)}{\cos(\pi/\ell)+1}\big).
\end{align*}
When $g > 0$ we also use
\begin{align*}
    \Fc_\square &:= \setbuilder{z\in\D}{ \abs{z-\sec(\theta)e^{(2k+1)\theta i}} \ge \tan(\theta), \; k=0,1,2,3 }, \quad\text{where }\theta=\tfrac{\pi}{4\ell}.
\end{align*}

\begin{figure}[hbt]
    \includegraphics[width=0.67\textwidth]{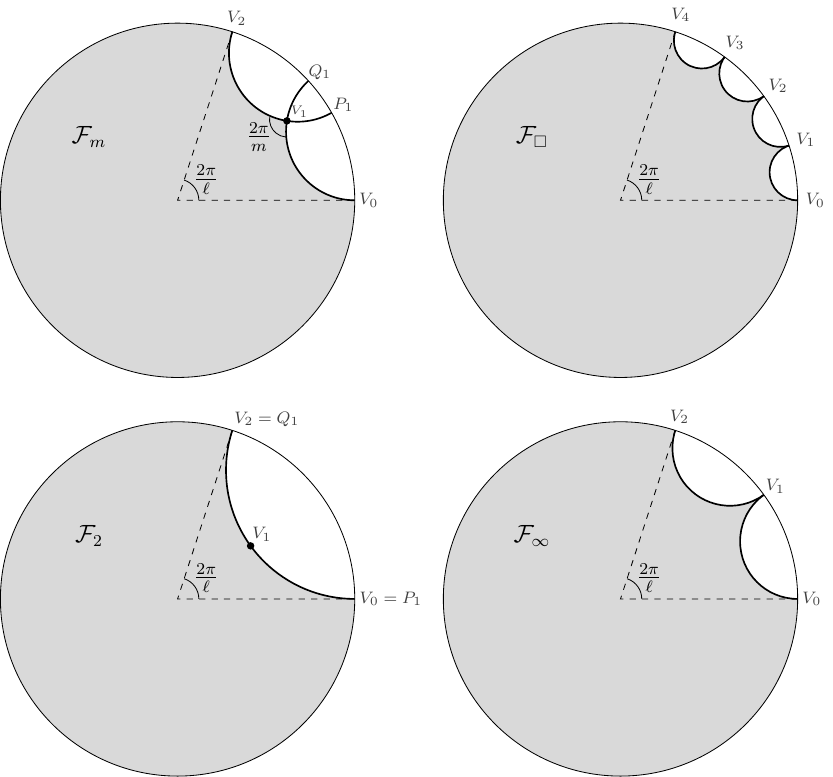}
    \caption{Standard-position fundamental polygons}
    \label{fig:our free combinations}
\end{figure}

The markings on these polygons are as follows:
\begin{itemize}[leftmargin=1cm]
    \item In $\Fc_m$ the non-ideal vertex is \[ V_1 = {\cos(\tfrac{\ell+m}{2\ell m}\pi)}{\sec(\tfrac{\ell-m}{2\ell m}\pi)}e^{(\pi/\ell)i}, \] with the geodesic arcs $(1,V_1)$ and $(V_1,e^{2\pi i/\ell})$ meeting there with angle $\frac{2\pi}{m}$. The former arc is glued to the latter by the map $c_m : \bar\D \to \bar\D$ that rotates by angle $\frac{2\pi}m$ around~$V_1$. As a matrix acting on the unit disk, $c_m$ is given explicitly by 
    \[ c_m = \Matrix{ 1 & -V_1 \\ -\overline{V_1} & 1 }^{-1} \!\Matrix{ e^{-\pi i/m} & 0 \\ 0 & e^{\pi i/m} }\! \Matrix{ 1 & -V_1 \\ -\overline{V_1} & 1 }, \]
    or, equivalently, 
    \[ c_m(z) = \frac{(1+\cos(\tfrac\pi m)e^{\pi i/\ell})z - (\cos(\tfrac\pi m) + \cos(\tfrac\pi \ell))e^{\pi i/\ell}}{\big((\cos(\tfrac\pi m) + \cos(\tfrac\pi \ell))e^{-\pi i/\ell}\big) z - (1+\cos(\tfrac\pi m)e^{-\pi i/\ell})} \] 
    Note that $\ang{c_m}$ is a cyclic group  of order~$m$.
    \item For $m=\infty$, the point $V_1 = e^{\pi i/\ell}$ is ideal and the side~$(1,V_1)$ is glued to~$(V_1,V_2)$ by \[ c_\infty(z) = \frac{2e^{2\pi i/\ell}z - (e^{2\pi i/\ell} + e^{3\pi i/\ell})}{(e^{\pi i/\ell}+1)z-2e^{\pi i/\ell}}. \]
    \item For $\Fc_\square$, denote $V_k = e^{k \pi i/(2\ell)}$ so that the vertices of $\Fc_\square$ are $V_{0}$, $V_{1}$, $V_{2}$, $V_{3}$, $V_{4}$, all of which are ideal. The side $V_{0}V_{1}$ is glued to $V_{3}V_{2}$ (note the orientations) by
    \[ a_1(z) = \frac{ -e^{5\pi i/(4\ell)}z + \cos(\tfrac\pi{4\ell})e^{3\pi i/(2\ell)} }{ -\cos(\tfrac\pi{4\ell})z + e^{\pi i/(4\ell)} },
    \] 
    and the side $V_{3}V_{4}$ is glued to $V_{2}V_{1}$ by \[ b_1(z) = e^{\pi i/(2\ell)} \cdot a_1^{-1}(e^{-\pi i/(2\ell)} z) 
    \] 
    so that the glued geodesics are isometric circles. Note that the transformation $b_1^{-1}a_1^{-1}b_1a_1$ maps $V_{0}$ to $V_{4}$.
\end{itemize}

Some examples of this construction are shown in \Cref{fig:polygon examples} (note that the polygon \Cref{fig:polygon examples}(a) in the disk model is exactly the right part of \Cref{fig:two modular polygons} in the half-plane model).

\begin{figure}[hbt]
    \includegraphics[width=0.9\textwidth]{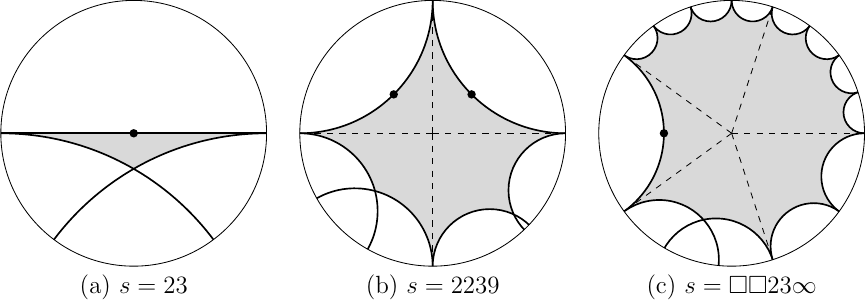}
    \caption{Examples of quasi-ideal polygons}
    \label{fig:polygon examples}
\end{figure}

%

\bibliographystyle{plain}
\bibliography{references}
\end{document}